\newcommand{\C}{\mathbb C}
\newcommand{\Z}{\mathbb Z}
\newcommand{\N}{\mathbb N}
\newcommand{\R}{\mathbb R}
\newcommand{\Q}{\mathbb Q}
\newcommand{\F}{\mathbb F}
\newcommand{\sma}{\left(\begin{array}}
\newcommand{\fma}{\end{array}\right)}
\newcommand{\injects}{\hookrightarrow}
\newtheorem{lem}{Lemma}[section]
\newtheorem{defn}[lem]{Definition}
\newtheorem{ex}[lem]{Example}
\newtheorem{co}[lem]{Corollary}
\newtheorem{thm}[lem]{Theorem}
\newtheorem{prop}[lem]{Proposition}
\newenvironment{proof}{\textbf{Proof.}}{\newline\hspace*{\fill}{$\Box$}\\}
\begin{document}
\title{Groups acting faithfully on trees and properly on products of trees}
\author{J.\,O.\,Button\\
Selwyn College\\
University of Cambridge\\
Cambridge CB3 9DQ\\
U.K.\\
\texttt{j.o.button@dpmms.cam.ac.uk}}
\date{}
\maketitle
\begin{abstract}
We examine the question of which finitely generated groups act properly
on a finite product of simplicial trees, considering both arbitrary trees 
and where all trees are locally finite. In the second case we present
evidence in favour of hyperbolic surface groups having such an action.
However we also present evidence that many RAAGs do not admit such an
action and we give an example of a virtually special group which does
not act properly preserving factors on any finite product of locally
finite trees, even though it does so on a product of three trees without
the local finiteness condition.   
\end{abstract}

\section{Introduction}
Given an abstract group $G$ and a geodesic metric space $X$,
it is of interest to know whether $G$ acts properly and 
cocompactly by isometries on $X$. 
For instance (\cite{delh}, \cite{bh})
$G$ is finitely generated if and only it acts as such on some
space $X$ and finitely presented if and only if there is some
simply connected space $X$ with such an action of $G$.
However for a given $X$, or indeed if $X$
comes from a particular class of geodesic metric spaces,
this can impose strong restrictions on the group; in particular $G$ 
must be quasi-isometric to $X$. 

One feature of this restriction is that if we want to understand
the subgroup structure of $G$ then any finite index subgroup $H$
of $G$ will also have such an action on $X$ but an arbitrary
subgroup might not. Also of concern is that if $G$ is contained
in an overgroup $L$ with finite index then a suitable action of
$G$ on $X$ might not extend to one of $L$. (For instance,
consider the free group $F_2$ acting on its Cayley graph, the
4-regular tree $T_4$, and a subgroup $F_4$ of index 3 also
acting nicely on $T_4$. If we now go
up to the free product $C_5*C_5$ which contains $F_4$ with
index 5, any action of this group on $T_4$ by isometries
fixes every point.) 
This means the property of ``acting properly and cocompactly on $X$
for $X$ a geodesic metric space in some well behaved class'' might not 
give us a commensurability invariant, even though of course
commensurable finitely generated groups are quasi-isometric.
 
Our approach in this paper is to keep the properness of the action
but to drop the cocompactness. However in the absence of the cocompactness
condition, a proper action can have different definitions in the literature
(and is sometimes used interchangeably with a discrete action), so
we will need to consider which of these definitions we will use,
particularly when $X$ is not a proper metric space. Having said that,
any reasonable definition of what it means for $G$ to act properly
on $X$ immediately holds for an arbitrary subgroup of $G$ by
restriction. As for extending to supergroups $L$ where $G$ has finite 
index $i$ say in $L$, the example above shows we cannot expect to do this for
the same space $X$. However, suppose we have a class $\mathcal C$
of geodesic metric
spaces which is closed under taking finite direct products (say with
the $\ell_1$ or $\ell_2$ product metric). On forming the direct product $P$
of $i$ copies of $X$, we can induce the action of $G$ up to that of $L$
where $L$ will permute the copies of $X$ in the product. This action
will still be by isometries and will be proper if the action of $G$
on $X$ is (though cocompactness is not preserved in general). Thus
if we have this closure property for $\mathcal C$ we find that 
``acting properly on a geodesic metric space $X$ in the class $\mathcal C$''
becomes a commensurability invariant, as well as being preserved under
taking arbitrary subgroups.  

What then might we take for our class $\mathcal C$ when studying the 
question of which groups $G$ act properly on spaces in $\mathcal C$?
If $\mathcal C$ is too general then we might not obtain any
information about our group $G$. For instance the Groves - Manning
combinatorial 
horoball construction (see \cite{gr}) shows that every countable group
$G$ acts properly by isometries on a locally finite hyperbolic graph
and it seems to be open whether every countable group acts properly
on some CAT(0) space.

On the other hand, if our class $\mathcal C$ of spaces is too restricted
then we might find that our class of groups acting properly on $\mathcal C$
is restricted as well. For instance if $\mathcal C$ consists of all
simplicial trees then by work of Serre and others, the finitely
generated groups acting properly on some space in $\mathcal C$ are
exactly the finitely generated virtually free groups. Moreover it
does not matter in this result whether our class of trees are all
locally finite or not.

Examples of classes $\mathcal C$ where the question of which groups
act properly on spaces in $\mathcal C$ has been investigated recently
include the class of CAT(0) cube complexes, where one might or might
not restrict to finite dimensional and/or locally finite spaces, and
(in \cite{bbk}, \cite{bbk2}) finite products of quasi-trees.  

In this paper our class $\mathcal C$ of spaces are finite products
of simplicial trees (thus obtaining a commensurability invariant
as mentioned above), where we consider the case of general trees (in
Section 5) separately from the case of locally finite trees (in Section 6).
Lemma 6.1 shows that for finitely generated groups, the question of
having a proper action where the factor trees are locally finite is
equivalent to the trees having uniformly bounded degree or
being regular trees of some finite degree. Before this in Section 4,
we look at the various definitions in use of a proper/discrete action
and show for clarity in Corollary 4.2 that all of these are equivalent
when acting on a finite product of locally finite trees. However
for trees in general, Theorem 4.1 shows that most, but not all definitions
are equivalent (acting metrically properly is too strong and acting
discretely is too weak).

However before we look at proper actions on products of trees, Sections 
2 and 3 consider how a group acts on a single tree. 
In the finitely generated case, we know that Bass - Serre
theory tells us which groups act properly on trees and which groups
act on some tree without a global fixed point. Here our emphasis is on
which groups have a faithful action on some tree, where we consider
actions both with and without a global fixed point. In fact these
questions are not interesting for general trees, but in the locally
finite case we give in Theorem 2.2 the characterisation of which
finitely generated groups have a faithful action on some locally finite
tree. In Section 3 we look at the case where the tree is of uniformly
bounded degree, or equivalently (in terms of groups possessing a
faithful action) a regular tree of finite degree. In
Theorem 3.2 (which treats the case of a global fixed point but deals
with arbitrary groups) and
Corollary 3.3 (for finitely generated groups but general actions) we
state which groups have a faithful action on some regular tree. In
particular (Example 3.5)
there are finitely generated groups with a faithful action on some
locally finite tree but no faithful action on any uniformly bounded
tree, in contrast to the case of proper actions.

Defining $T_d$ to be the regular tree where every vertex has degree $d$,
saying $G$ acts faithfully on $T_d$ is the same as saying $G$ is a
subgroup of the group $Aut(T_d)$ of simplicial automorphisms. For degree 
$d'>d\geq 3$, we would expect that $Aut(T_d)$ and
$Aut(T_{d'})$ are never isomorphic as abstract groups and indeed this
was shown in \cite{z}, with other proofs in \cite{mol} and \cite{bslb}.
However we can give in Theorem 3.6 an extremely quick and easy proof
of this result, simply by finding for each $d'$ a well known finite group 
which embeds in $Aut(T_{d'})$ but not $Aut(T_d)$. As a variation, we can
consider the regular rooted trees $R_d, R_{d'}$ where the subscript
is the degree of the root vertex, with every other vertex having degree
one higher, and the abstract groups $Aut(R_d)$, $Aut(R_{d'})$ (where
every automorphism will fix the root vertex). Exactly the same argument
of finding a small finite subgroup of $Aut(R_{d'})$ which is not
contained in $Aut(R_d)$ works to show that these groups are not
isomorphic, except when $d'=4$ and $d=3$. This gives rise to a strange
phenomenon proved in Theorem 3.8, which is that $Aut(R_3)$ is a subgroup
of $Aut(R_4)$ and (rather less obviously) $Aut(R_4)$ is a subgroup
of $Aut(R_3)$. The idea behind this is that any group with a subgroup of index
4 also has one of index 2 or 3.

Moving on from actions on trees to actions on products of trees, we discuss 
definitions and generalities of such actions in Section 4. Our aim in Sections
5 and 6 is to demonstrate that acting properly on a finite product of trees
is not so unusual a property for a group to have, whereas requiring a proper
action where all the trees are locally finite is much more restrictive. It
is known that any virtually special group (a group with a finite index
subgroup that embeds in a RAAG, where we take RAAGs to have defining
graphs which are finite) has the former property but we give a quick
and basic proof in Theorem \ref{rg}. However a surprise is in store when
we ask what is the minimum number of trees we can have in a proper action.
Although (excluding free groups) the 2 dimensional RAAGs are the ones with
defining graphs which are triangle free, under various definitions such
as geometric or cohomological dimension,
we show by combining Corollary \ref{chr} and
Proposition \ref{odd} that the RAAGs acting properly on a product of two
trees are precisely those whose defining graph has chromatic number (1 or)
2, or equivalently those graphs which do not contain an odd length closed
path. This immediately implies Corollary \ref{sbgr} which states that a
RAAG whose defining graph contains an odd length closed path cannot embed
in a RAAG whose graph does not.

The question of which groups possess
proper actions on finite products of locally finite trees seems much
more mysterious. For instance we have direct products of free groups,
Burger - Mozes groups (these two examples also act cocompactly), certain
wreath products and subgroups of these examples. But what about an
example of a word hyperbolic group with such an action but which is not
virtually free? The obvious example to try would be the fundamental group
$S_g$ of a closed orientable hyperbolic surface $\Sigma_g$ with genus 
$\geq 2$. In fact exactly this question was raised in \cite{flss} which
gives partial results and which was the motivation for much of this work.
Before examining this case though, we look in Section 6 at some groups
containing $\Z\times\Z$. For instance, we can ask: does every RAAG act
properly on a finite product of locally finite trees? (Many RAAGs are
known to contain surface subgroups so a yes answer for this question
would certainly answer the previous question as well.) However we
present evidence for a negative answer. In Proposition \ref{thrtr} and
Theorem \ref{noac} we find a related group, a CAT(0) and virtually
special group $G$ of the form $F_2\rtimes\Z$, which does not act
properly preserving factors on any finite product of locally finite
trees, even though it acts properly preserving factors on a product
of 3 trees if we remove the local finiteness condition. There is
a particular index 2 subgroup $N$ of $G$ where if it were shown
that $N$ does not act properly preserving factors on any finite product 
of locally finite trees then this would imply (by Corollary \ref{noacc})
that most RAAGs have no proper action on a finite product of locally finite
trees, whether or not the action preserves factors. However we leave this
particular question open.

The final two sections look at the existence of a proper action
for our surface groups $S_g$
and we present some evidence in favour, by
considering how these groups can act on a single locally finite tree.
Corollary \ref{injcloc2} shows that a necessary condition for such an
action is that for any non identity element $\gamma$ in $S_g$ (or
indeed any torsion free group which does not contain $\Z\times\Z$),
there is an action of $S_g$ on some locally finite tree which is
minimal, faithful and such that $\gamma$ acts as a hyperbolic element.
Note that without the faithful condition we could use the residual
freeness of $S_g$ to obtain a suitable action on the Cayley graph
of a finitely generated free group, whereas without the minimal
condition we could take the same action and convert it into a
faithful action on a different locally finite tree using the
techniques of Section 2.

It is pointed out in \cite{flss} that if we could find an embedding
of $S_g$ in $SL(2,\F)$ for $\F$ a global field of positive characteristic
(a finite degree field extension of $\F_p(x)$ for some prime $p$) then
$S_g$ would act properly on a finite product of locally finite trees, by
taking the Bruhat - Tits tree associated to a finite number of
discrete valuations $v$ on $\F$. Given such a global field $\F$ and 
some $v$, we have the local field $k$ obtained by taking
the completion of $\F$ with respect to $v$. Now it is clear
that for any prime $p$, the group $S_g$ embeds in $SL(2,k)$ for some
local field $k$ of characteristic $p$. For instance, using \cite{lubseg}
Window 8 Theorem 1 which was originally due to 
Malceev, the two facts that a finite
rank free group has a 2 dimensional faithful linear representation
in characteristic $p$ and
that $S_g$ is fully residually free implies that $S_g$ has such a
representation too. As $S_g$ is finitely generated,
we can take $K$ to be finitely generated over its prime subfield $\F_p$
which means that $K$ is a finite extension of
$\F_p(t_1,\ldots ,t_d)$ where obviously $d>0$ and $t_1,\ldots ,t_d$
are algebraically independent elements. Thus if $k$ is any
local field containing $\F_p(x)$, say $\F_p((x))$, and we set $t_1=x$
then the uncountability of $k$ means that the field
$\F_p(t_1,\ldots ,t_d)$ will embed in $k$ and moreover any finite
extension of $\F_p(t_1,\ldots ,t_d)$ will embed in some finite extension
of $k$ which will also be a local field.
 
Thus we can view the question of whether $S_g$ embeds in $SL(2,\F)$ for
$\F$ a global field (which implies the existence of  a proper action)
as the top question in a series of questions as to how ``economical''
we can take our field $K$ to be, since this is the $d=1$ case. As this
is unknown, we can instead ask how small we can take $d$ in a faithful
representation of $S_g$ and also how small the degree of our field extension
needs to be. In \cite{flss} it was shown that 
for every prime $p$ at least 5, there is a faithful embedding
of $S_2$ (and hence $S_g$) in $PGL(2,K)$ where $K$ is a finite extension of 
$\F_p(x,y)$, thus we can take $d=2$ if $p\neq 2,3$. Here we remove the
condition on $p$ and the need for a finite extension in that we provide in
Theorem \ref{mat} and Corollary \ref{mat2} a completely explicit
embedding of $S_2$ (and hence $S_g$ for any $g\geq 2$) in
$SL(2,\F_p(x,y))$ for any prime $p$. (In fact, as is presumably usual, the
four matrices provided in Theorem \ref{mat} are independent of $p$ and
work for every odd prime $p$, whereas $p=2$ requires a different
representation which is given in Corollary \ref{mat2}.) As this is the
most ``economical'' field possible for a 2 dimensional representation of
$S_g$ in positive characteristic short of actually establishing
a proper action on a finite product of locally finite trees,
we feel that this provides evidence in favour of
the existence of such an action. Moreover our faithful representation
of $S_g$ in $SL(2,\F_p(x,y))$ provides in Corollary \ref{moreacc}, for 
any non identity element $\gamma$ in $S_g$, a minimal faithful action
of $S_g$ on a locally finite tree in which $\gamma$ acts hyperbolically,
thus providing further evidence in favour.

Our faithful representation of $S_2$ is obtained by using results in
\cite{con} on how to determine whether a pair of elements in the
automorphism group of a locally finite tree generate a discrete and
faithful copy of $F_2$. We then combine this with a result in \cite{sha}
on constructing faithful linear representations of an amalgamated free
product, given faithful linear representations of the factor groups.
In order to apply this result, we require faithful representations
of the free group on $a,b$ say in $SL(2,\F)$ for suitable fields $\F$
where the commutator $aba^{-1}b^{-1}$ is a diagonal matrix. The results
of our calculations
are stated in Theorem \ref{matfrm} and can be taken on trust if so desired,
but the argument explaining how to obtain the forms of these matrices 
is included here, though it was felt best to relegate this part to the
Appendix.

\section{Groups acting faithfully on locally finite 
trees}

For us, all trees in this paper are simplicial trees, defined 
in the standard combinatorial way as in \cite{ser}, but then
regarded as metric spaces by equipping them with the resulting path
metric. They might or might not be locally finite trees; indeed a theme
here will be examining the differences between the two cases. 
(Note that when the tree is not locally finite, the topology induced
by the path metric is not the same as the CW topology.)

Given any tree
$T$, we will use $Aut(T)$ to denote
the group of simplicial automorphisms of $T$ and these will be 
isometries of $T$. Moreover, saying that an abstract group $G$ acts on a tree 
$T$ will also mean that $G$ acts on $T$ by
simplicial automorphisms, but it need not imply that the action is
faithful; in fact one of our aims here is to
see which groups do possess faithful actions on various trees.
We will be vague as to whether our group $G$ acts on $T$ with or without
edge inversions, which can be justified by the following reason: when
we refer to $Aut(T)$ for a particular tree $T$, this will always
include those simplicial automorphisms which invert an edge. However when we
are considering an abstract group $G$, our focus will be on whether
or not there is an action of $G$ on {\it some} tree $T$ that
satisfies suitable conditions. Thus if $G$ does invert an edge when
acting suitably on $T$ then it would act suitably without edge inversions 
and still by isometries on the barycentric subdivision of $T$.
With this in mind, we say that an action of a group $G$ on a tree $T$
is free if no group element
except the identity has a fixed point in $T$ (equivalently no vertex
or midpoint of an edge in $T$ is fixed, or again equivalently no
vertex of the barycentric subdivision is fixed)
in which case the action will of course
be faithful. A theorem of Serre (\cite{ser} Proposition 15 and Theorem 4) 
states that a group $G$ acts freely 
on some tree $T$ if and only if it is
a free group. Note that in this statement there
is no restriction that $G$ is finitely generated, nor that $T$ is a
locally finite tree. 

But what about other actions of a group $G$ on a tree $T$? Of course 
in Bass - Serre
theory we have an important distinction between actions with a global fixed
point (sometimes called trivial actions, but here we reserve that term for when
every element acts as the identity) and those without. Indeed a main
feature of Bass - Serre theory (\cite{ser} Theorem 15) is the result that a
finitely generated group has an action on some tree without a global fixed
point if and only if it splits non trivially as an amalgamated free
product or HNN extension.

Here though we are also interested in actions of a group on a tree which do
have a global fixed point. Of course the trivial action always exists, so
we can start by asking which groups act faithfully on some tree with a global
fixed point. But actually any group $G$ does, as one can take a 
vertex for each element of $G$ along with a root vertex $v_0$ joined to all
other vertices (but with no other edges) and then let $G$ fix $v_0$ but
act on the other vertices by self multiplication (we call this the star
graph construction).

On now restricting to finitely generated groups $G$, whereupon we know the
condition for existence of an action of $G$ on some tree without a global
fixed point, our next question might therefore be: when does $G$ have
an action on a tree without a global fixed point which is faithful? 
However again
this turns out not to be an interesting question because the answer is
the same as without the condition of faithfulness, which can be seen
for instance by using the construction given later in Theorem \ref{fthtre}.

Instead we consider the case where all our trees are locally finite, 
whereupon we are able to characterise those finitely generated groups with 
such actions as above. In the case of a global fixed point, the following 
lemma is folklore: 

\begin{lem} \label{resf}
A countable group $G$ acts faithfully on some locally finite tree $T$ 
with a global fixed point
if and only if $G$ is residually finite.
\end{lem}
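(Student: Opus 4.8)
The plan is to prove both implications by relating the descending chain of finite quotients coming from residual finiteness to the action of $G$ on the spheres of a rooted locally finite tree. The central object is the \emph{coset tree} associated to a suitable chain of finite index subgroups, which serves as a dictionary between the two conditions.

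For the direction assuming $G$ is residually finite, I would first enumerate the countably many elements $g_1, g_2, \ldots$ of $G$ and, for each $g_i \neq 1$, choose a finite index normal subgroup $M_i \nn G$ with $g_i \notin M_i$. Setting $N_j = \bigcap_{i \leq j} M_i$ produces a descending chain $G = N_0 \supseteq N_1 \supseteq N_2 \supseteq \cdots$ of finite index normal subgroups with $\bigcap_j N_j = \{1\}$. I then build a tree $T$ whose vertices at level $j$ are the finitely many cosets in $G/N_j$, joining each coset $gN_{j+1}$ at level $j+1$ to its unique parent $gN_j$ at level $j$. Since $N_{j+1} \subseteq N_j$ this is well defined, the level-$0$ vertex $N_0 = G$ is a root, and the resulting graph is a rooted tree. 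It is locally finite because a level-$j$ vertex has exactly one neighbour above it and precisely $[N_j : N_{j+1}]$ neighbours below. Letting $G$ act by left multiplication on each $G/N_j$ gives an action by simplicial automorphisms fixing the root; and if $g$ fixes every vertex then in particular $gN_j = N_j$ for all $j$, so $g \in \bigcap_j N_j = \{1\}$, whence the action is faithful with a global fixed point.

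For the converse, suppose $G$ acts faithfully on a locally finite tree $T$ fixing a point $v$; after passing to the barycentric subdivision if necessary I may assume $v$ is a vertex, and local finiteness is preserved. Because $v$ is fixed, $G$ preserves each sphere $S_n$ of vertices at distance $n$ from $v$, and each $S_n$ is finite by local finiteness. The action on the ball of radius $n$ therefore gives a homomorphism to a finite symmetric group whose kernel $K_n$ has finite index in $G$. Faithfulness says that every $g \neq 1$ moves some vertex, necessarily at some finite distance $n$ from $v$, so $g \notin K_n$; thus the finite quotient $G/K_n$ detects $g$, proving $G$ is residually finite.

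The arguments are short and the only points needing care are mild ones: ensuring the chain $N_j$ can be taken normal with trivial intersection (a finite intersection of finite index normal subgroups remains such, and trivial intersection follows from the enumeration), and handling a global fixed point that is a midpoint rather than a vertex (resolved by barycentric subdivision, which keeps the tree locally finite). I do not anticipate a substantial obstacle here; the essential content is simply this correspondence between a chain of finite index subgroups with trivial intersection and the level structure of a rooted locally finite tree.
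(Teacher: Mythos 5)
Your proof is correct and follows essentially the same route as the paper: the coset tree built from a descending chain of finite index subgroups with trivial intersection for one direction, and finiteness of orbits/spheres around the fixed point to produce separating finite index subgroups for the other. The only cosmetic difference is that you use kernels of the action on spheres (normal subgroups) where the paper uses individual vertex stabilisers, which is an equivalent formulation of residual finiteness.
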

\begin{proof}
We consider the global fixed point $v_0$ to be the root of the tree $T$.
Given any vertex $v\in T$, we have that the orbit of $v$ under any group
$G$ acting on $T$ and fixing $v_0$ 
is finite because $T$ is locally finite, so the stabiliser
$S_v$ of any $v\in T$ has finite index in $G$. But if the action of $G$ is
also faithful then for any non identity element $g\in G$ we have
some vertex $v$ with $g(v)\neq v$ and so $g\notin S_v$.

Now suppose that $G$ is countable and residually finite. By enumerating
$G\setminus\{id\}=\{g_1,g_2,\ldots \}$, we can find a chain
$G=G_0> G_1 >G_2>\ldots $ of finite index subgroups $G_i\leq G$ having
trivial intersection. We then inductively create our tree $T$
by starting with a root vertex $v_0$ corresponding to $G_0$. 
The vertices at the $n$th level of $T$ are the cosets of $G_n$
in $G$, with the coset $gG_n$ joined by an edge to the coset $\gamma G_{n-1}$
on the previous level if and only if $gG_n\leq \gamma G_{n-1}$, which happens
if and only if $g\in \gamma G_{n-1}$. Then left multiplication of $G$ on
these cosets of the various subgroups $G_i$ gives us an action of $G$
on the tree $T$ with fixed point $v_0$. Moreover if $g\notin G_n$ then
$g$ moves the vertex $G_n$ to $gG_n\neq G_n$, so this action is
faithful.
\end{proof}

Note: by the same means, we can immediately see that a group acts
non trivially on some locally finite tree with a global fixed point
if and only if it has a proper finite index subgroup.

In line with the list above of questions about group actions
on arbitrary trees, we next ask: which finitely generated groups act 
faithfully on some locally finite tree without a global fixed point? We 
can now answer this in full, thus in combination with the above
we have a complete characterisation of when a finitely generated group $G$
embeds in $Aut(T)$ for some locally finite tree $T$.

\begin{thm} \label{fthtre}
A finitely generated group $G$ has a faithful action on some locally finite
tree without a global fixed point
if and only if $G$ can be expressed as the fundamental group of a 
non trivial finite 
graph of groups with all vertex groups residually finite (but not
necessarily finitely generated) and all edge groups having finite index
in the vertex groups.
\end{thm}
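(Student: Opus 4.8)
For necessity and sufficiency alike, the plan is to use Bass--Serre theory (\cite{ser}) as the structural backbone, with Lemma \ref{resf} controlling the residual finiteness of the vertex groups in both directions. The key numerical input throughout is that for an action on a \emph{locally finite} tree the stabiliser of a vertex $v$ acts on the finite set of edges incident to $v$, so by the orbit--stabiliser theorem the stabiliser of any incident edge has finite index in the stabiliser of $v$; this is exactly the finite index condition on the edge groups, obtained at both endpoints of each edge.

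For necessity, suppose $G$ acts faithfully on a locally finite tree $T$ with no global fixed point. First I would arrange a \emph{finite} graph of groups. Fixing a finite generating set $\{g_1,\dots,g_n\}$ and a base vertex $v$, I take $T'$ to be the smallest $G$-invariant subtree containing the finite geodesics $[v,g_iv]$; since $T'$ is covered by the finitely many $G$-orbits of edges lying on these geodesics, the quotient $T'/G$ is a finite graph, and $T'$ is again locally finite with no global fixed point. Bass--Serre theory then expresses $G$ as $\pi_1$ of the finite graph of groups $T'/G$, non trivial precisely because there is no global fixed point, with edge groups the edge stabilisers and vertex groups the vertex stabilisers; the finite index condition follows from the remark above applied in $T'$. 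Finally, each vertex group $G_w$ is the stabiliser of a vertex $w$, hence a countable subgroup of $G$ acting faithfully on the locally finite tree $T$ with global fixed point $w$ (this uses the action on $T$ itself, not the possibly non faithful restriction to $T'$), so Lemma \ref{resf} gives that $G_w$ is residually finite.

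For sufficiency, given such a finite graph of groups let $T_{BS}$ be its Bass--Serre tree, on which $G$ acts with no global fixed point and with vertex (resp. edge) stabilisers the conjugates of the vertex (resp. edge) groups. Since the edge groups have finite index in the vertex groups and the graph is finite, every vertex of $T_{BS}$ has finite degree, so $T_{BS}$ is locally finite. As this action need not be faithful, I would enlarge it: choosing orbit representatives $v_1,\dots,v_k$ for the vertices and, via residual finiteness and Lemma \ref{resf}, a locally finite rooted tree $R_i$ on which $G_{v_i}$ acts faithfully fixing its root, I attach to each vertex $gv_i$ of $T_{BS}$ an induced copy $G\times_{G_{v_i}}R_i$, gluing the roots onto $T_{BS}$; realising the attachment as this quotient object makes the $G$-action well defined and independent of the representative $g$. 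The result $T$ is a locally finite tree (finite degree in $T_{BS}$ plus the finite root degree of the relevant $R_i$ at each vertex) with no global fixed point, since the $G$-equivariant retraction $T\to T_{BS}$ would carry any such point down to one in $T_{BS}$. Faithfulness is the point: if $1\neq g$ acted trivially on $T$ it would act trivially on $T_{BS}$, hence lie in every vertex stabiliser, in particular in $G_{v_1}$, whence it would act on the fibre $R_1$ over $v_1$ as a non identity element of the faithfully acting group $G_{v_1}$, a contradiction.

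The routine parts are the local finiteness and the edge index bookkeeping. The step requiring the most care is the equivariant attachment in the sufficiency direction: the induced trees must be glued so that the $G$-action is well defined, so that no new global fixed point or inversion is introduced, and, crucially, so that the kernel of the action on $T_{BS}$ is detected faithfully on the attached rooted trees. Setting up the attachment as the induced object $G\times_{G_{v_i}}R_i$ is what makes these three requirements simultaneously transparent, and checking that this kernel acts faithfully on the fibres is the heart of the argument.
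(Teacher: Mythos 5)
Your proposal is correct and follows essentially the same route as the paper: restrict to a finite-quotient invariant subtree and apply Lemma \ref{resf} to vertex stabilisers for necessity, and for sufficiency make the Bass--Serre action faithful by equivariantly attaching the rooted trees from Lemma \ref{resf} at each vertex. Your packaging of the attachment as the induced object $G\times_{G_{v_i}}R_i$ is just a cleaner way of writing the paper's explicit transversal bookkeeping, and your kernel-based faithfulness check is the same argument.
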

\begin{proof}
If $G$ is the fundamental group of a finite
graph of groups then $G$ acts on the associated Bass - Serre tree 
with vertex stabilisers that are conjugate in $G$ to some vertex group.
Moreover
this will be a locally finite tree if all edge groups have finite
index in the corresponding vertex groups. The action will have a global
fixed point if and only this decomposition is trivial.

Conversely suppose that $G$ acts on some locally finite tree $T$. Then
finite generation of $G$ means that we can take an invariant subtree
$T_0$ where the quotient $G\backslash T_0$ is a finite graph (with $T_0$
also locally finite), thus giving rise to an expression 
of $G$ as the fundamental group of a finite graph of groups where all 
edge groups have finite index in the vertex groups. 
Again this action will also
have a global fixed point if and only if the graph of groups
decomposition of $G$ is trivial. If further we know that this action is
faithful then
take a vertex $v$ and restrict the action of $G$ on $T$
to the vertex stabiliser $G_v$, which will also be a faithful action
on $T$
and with $v$ as a global fixed point. As $T$ is locally finite, Lemma 
\ref{resf} tells us that $G_v$ is residually finite, hence so are all
vertex groups.

Thus now we assume that $G$ has such a graph
of groups decomposition and we take the action of $G$ on the associated
Bass - Serre tree $T$. We
need to ensure a faithful action, which will be achieved
by adding subtrees at each vertex of $T$ so that point stabilisers act
faithfully, then we extend the action of $G$ equivariantly.

To do this, take $v_1,\ldots ,v_r$ to be representatives in the tree $T$
of the vertices in the finite quotient graph $G\backslash T$ and let
$H_1,\ldots ,H_r\leq G$ be the corresponding vertex stabilisers. We will
form a new tree $\overline{T}$ (which will still be locally finite) in which
$T$ embeds, by first adding a rooted tree $R_i$ to each vertex $v_i$
with $v_i$ as the root vertex. Each tree $R_i$ is obtained by applying
Lemma \ref{resf} to $H_i$, which is isomorphic to some vertex group
and hence residually finite, hence we have a natural action of $H_i$ on
$R_i$ with the root vertex $v_i$ fixed by $H_i$. 

Having done this, we now place a rooted tree at every other vertex $v$ of $T$.
We call this subtree $R_i^{(v)}$, where $v\in\mbox{Orb}(v_i)$ and we make it
naturally isomorphic to the subtree $R_i=R_i^{(v_i)}$ at vertex $v_i$.

We must now define the action of $G$ on the new tree $\overline{T}$, which
is the same as before on the subtree $T$. For each $1\leq i\leq r$
we have that the left cosets of $H_i$ in $G$ are in bijection with the
points in $\mbox{Orb}(v_i)$, so we can 
take a left transversal (infinite in general) of $H_i$ in $G$ which we
denote as $\{g_i^{(v)}: v\in\mbox{Orb}(v_i)\}$ and where we have
$g_i^{(v)}(v_i)=v$. Let us first take an
element $h_v$ in the stabiliser $H_v$ of some vertex $v$ which is in the
$G$-orbit of $v_i$. We define the action of $h_v$ on the subtree $R_i^{(v)}$
as follows: note that $(g_i^{(v)})^{-1}h_vg_i^{(v)}$ is in the stabiliser
$H_{v_i}$ which already has an action on the rooted tree $R_i$. Thus for
the vertex $r_i^{(v)}\in R_i^{(v)}$,
we define $h_v(r_i^{(v)})=s_i^{(v)}$, where we have 
$(g_i^{(v)})^{-1}h_vg_i^{(v)}(r_i)=s_i\in R_i$ and where $r_i^{(v)},s_i^{(v)}$ are
the vertices equivalent to $r_i,s_i$ under the natural isomorphism between
$R_i$ and $R_i^{(v)}$.

Now for an arbitrary element $\gamma\in G$ and a ``new'' vertex $r_i^{(v)}$
in the rooted subtree $R_i^{(v)}$ based at $v\in\mbox{Orb}(v_i)$, we will
let $w$ be the vertex $\gamma(v)$ and (uniquely) write 
$\gamma g_i^{(v)}=g_i^{(w)}h_{v_i}$. Setting 
$h_{v_i}=(g_i^{(v)})^{-1}h_vg_i^{(v)}$ which is in $H_i$, we define
$\gamma(r_i^{(v)})=s_i^{(w)}$ where $h_{v_i}(r_i)=s_i$ and the vertices
$r_i,r_i^{(v)}$ are the equivalent vertices in the subtrees 
$R_i,R_i^{(v)}$ and $s_i,s_i^{(w)}$ are the equivalents in $R_i$ and
$R_i^{(w)}$.

To show this is an action on the enlarged tree $\overline{T}$, the identity
still acts trivially on $\overline{T}$ so consider $\gamma,\delta\in G$
and a vertex $v$ in the original tree $T$, along with a vertex
$r_i^{(v)}$ in the subtree $R_i^{(v)}$ rooted at $v$ and where 
$v\in\mbox{Orb}(v_i)$. Let us also set $\delta(v)=w$ and $\gamma(w)=x$.
We then obtain unique elements $l_{v_i},h_{v_i}\in H_i$ where
$\delta g_i^{(v)}=g_i^{(w)}l_{v_i}$ and $\gamma g_i^{(w)}=g_i^{(x)}h_{v_i}$,
so that $\gamma\delta g_i^{(v)}=g_i^{(x)}h_{v_i}l_{v_i}$ for the unique
element $m_{v_i}=h_{v_i}l_{v_i}\in H_i$.

Suppose then that $\delta(r_i^{(v)})=s_i^{(w)}$ where $s_i\in R_i$ is the
image under $l_{v_i}$ of the point $r_i\in R_i$ equivalent to $r_i^{(v)}$,
and $s_i^{(w)}$ is equivalent to $s_i$. In the same way, set
$\gamma(s_i^{(w)})=t_i^{(x)}$ where $h_{v_i}(s_i)=t_i$ back in the subtree
$R_i$, so that we have $\gamma(\delta(r_i^{(v)}))=t_i^{(x)}$. But for the
product element $\gamma\cdot\delta\in G$, we have 
$\gamma\cdot\delta(v)=x$ from the action of $G$ on $T$. This means that
$\gamma\cdot\delta(r_i^{(v)})$ is obtained by writing
$\gamma\cdot\delta g_i^{(v)}$ as $g_i^{(x)}n_{v_i}$ for a unique
element $n_{v_i}\in H_{v_i}$ and then we have 
$\gamma\cdot\delta(r_i^{(v)})=u_i^{(x)}$ in the usual way, where
$n_{v_i}(r_i)=u_i\in R_i$.

So we have in $G$ that $\gamma\delta g_i^{(v)}=g_i^{(x)}m_{v_i}$
and $\gamma\cdot\delta g_i^{(v)}=g_i^{(x)}n_{v_i}$, thus 
$n_{v_i}=m_{v_i}=h_{v_i}l_{v_i}$ and hence $u_i=n_{v_i}(r_i)=h_{v_i}(s_i)=t_i$.
This gives us $u_i^{(x)}=t_i^{(x)}$ and
$\gamma(\delta(r_i^{(v)}))=(\gamma\cdot\delta)(r_i^{(v)})$, so we have our 
action.  
Moreover for any non identity element $g\in G$, if it fixes a vertex $v$
of $T$ then it is conjugate to an element in $H_i$ for the appropriate $i$.
But this conjugate acts faithfully on the rooted tree $R_i$ by definition
of the action on $R_i$ from Lemma \ref{resf}, thus overall the action
of $G$ on $\overline{T}$ is faithful.
\end{proof} 

We can also use this construction in other contexts, for instance as
mentioned just before Lemma \ref{resf}, we can turn an arbitrary action
of a group $G$ on a tree $T$ into a faithful action of $G$ on another tree
$T'$ by taking each vertex $v\in T$ in turn and adding a rooted star graph
at $v$ on which the stabiliser $G_v$ acts faithfully (making sure as in the
above proof that the action of the stabilisers on each of these rooted
trees is defined equivariantly for vertices in the same orbit under $G$).
For instance we have:

\begin{co} If a countable group $G$ acts faithfully with a global fixed point
on some locally finite tree $T$ and it also acts without a global fixed
point on some other locally finite tree $T'$ then there exists a
faithful action of $G$ on a locally finite tree without a global fixed point.
\end{co}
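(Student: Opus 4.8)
The plan is to begin from the given action of $G$ on $T'$ without a global fixed point and to enlarge $T'$ by attaching rooted trees at its vertices, exactly as in the proof of Theorem \ref{fthtre}, thereby upgrading this action to a faithful one while retaining both local finiteness and the absence of a global fixed point. The point that makes this possible is the first hypothesis: since $G$ acts faithfully with a global fixed point on a locally finite tree, Lemma \ref{resf} tells us that $G$ is residually finite, and both residual finiteness and countability pass to subgroups. In particular every vertex stabiliser $G_v$ of the action on $T'$ is a countable residually finite group, so by Lemma \ref{resf} again each such $G_v$ acts faithfully on some \emph{locally finite} rooted tree with the root fixed. It is essential here that we may take these rooted trees locally finite, since the general star graph construction would produce a non-locally-finite tree whenever $G_v$ is infinite.

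First I would carry out the equivariant attachment. Choosing orbit representatives $v_1,\ldots,v_r$ of the vertices of $T'$ under $G$ and setting $H_i=G_{v_i}$, I attach a copy $R_i^{(v)}$ of the locally finite rooted tree supplied by Lemma \ref{resf} at each vertex $v\in\mbox{Orb}(v_i)$, with $v$ as its root, and define the $G$-action on the enlarged tree $\overline{T'}$ verbatim as in Theorem \ref{fthtre}: on the subtree $T'$ it is the original action, while on the new subtrees it is prescribed by transporting the $H_i$-action on $R_i$ through a chosen left transversal of $H_i$ in $G$. The verification that this is a genuine action and that it is faithful is identical to the argument given there; in brief, a nonidentity $g\in G$ either moves a vertex of $T'$, hence acts nontrivially already on the invariant subtree $T'\subseteq\overline{T'}$, or fixes a vertex $v$, in which case $g$ is conjugate into some $H_i$ and moves a vertex of the attached $R_i^{(v)}$ by faithfulness of that subaction. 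Local finiteness of $\overline{T'}$ is immediate, since $T'$ and each attached tree are locally finite and attaching a rooted tree at $v$ raises the degree of $v$ only by the finite degree of its root.

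The remaining point, and the one I expect to require the most care, is to confirm that $\overline{T'}$ carries no global fixed point. Here I would argue via nearest-point projection onto the invariant subtree $T'$: if some $p\in\overline{T'}$ were fixed by all of $G$, then, writing $q$ for the point of $T'$ closest to $p$ and using that $G$ preserves $T'$ and acts by isometries, we would have $g\cdot q=\mbox{proj}_{T'}(g\cdot p)=\mbox{proj}_{T'}(p)=q$ for every $g\in G$, so that $q$ is a global fixed point of the action on $T'$, contradicting the hypothesis on $T'$. Hence the enlarged action on the locally finite tree $\overline{T'}$ is faithful and has no global fixed point, as required.
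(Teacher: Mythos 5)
Your proposal is correct and is essentially the paper's own argument: deduce from Lemma \ref{resf} that $G$, and hence every vertex stabiliser of the action on $T'$, is residually finite, then attach locally finite rooted trees equivariantly as in the proof of Theorem \ref{fthtre} (your extra verification via nearest-point projection that no global fixed point is created is a welcome detail the paper leaves implicit). The only caveat is that the quotient $G\backslash T'$ may have infinitely many vertices since $G$ need not be finitely generated, so the orbit representatives should be indexed by a possibly infinite set rather than written $v_1,\ldots,v_r$; this does not affect the construction or its local finiteness.
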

\begin{proof} We know $G$ will be residually finite from its action on
$T$ and therefore in its action on $T'$, every vertex stabiliser will
be residually finite. We can then add subtrees at every vertex to
ensure the action of $G$ is faithful, exactly as in the proof of
Theorem \ref{fthtre} except that there may be infinitely many
vertices in the quotient graph.
\end{proof}

We illustrate with some examples:
\begin{ex}
\end{ex}
(1) An example of a finitely generated group $G$ with no faithful action
on any locally finite tree would be if $G$ has Serre's property FA and is
also not residually finite, by Lemma \ref{resf}. If further $G$ has no
proper finite index subgroups then every action is trivial. In particular
we have in \cite{cap} by Caprace and R\'emy 
even finitely presented groups $S$ with property (T), thus
with FA, but which are simple.

If we now consider the free product $S*S$ then this group has an action
on the corresponding Bass - Serre tree of infinite degree with no global
fixed point and which is faithful (as for a free product no edge is
fixed by a non trivial element) but still every action on a locally finite
tree is trivial (as each factor $S$ would fix a vertex and so act trivially).
We can even consider examples such as the amalgamation 
$(S\times C_d)*_S (S\times C_d)$ for $C_d$ the cyclic group of order $d$,
where this splitting gives rise to
an action on a locally finite tree without a global fixed point, but still
without a faithful action on any locally finite tree.
\\
\hfill\\
(2) In \cite{bh1} Bhattacharjee
showed the existence of amalgamated free products
$G=F_k*_{F_l}F_k$ with $F_l$ having finite index in the rank $k$ 
free groups $F_k$ on either side but where $G$ has no proper finite index 
subgroups. Thus any action of $G$ on a locally finite tree with a global
fixed point is trivial, but this splitting of $G$ gives us an action without
a global fixed point on a locally finite tree.
Although it was mentioned in \cite{bh2} that it was
not known whether this particular action of $G$ is faithful, one can use 
Theorem \ref{fthtre} to turn this into a faithful action on a locally finite
tree. Alternatively the existence of such
examples which are simple, by the deep methods of Burger - Mozes in \cite{bm}, 
would give a faithful action (because the kernel of the action would be a 
normal subgroup).\\
\hfill\\
(3) The Baumslag-Solitar group $BS(2,3)$, which is not
residually finite, has non trivial actions on locally finite trees with a 
global fixed point but none of them can be faithful. However it does have
faithful actions on locally finite trees without a global fixed point by 
using the splitting obtained from the HNN extension.
 
\section{Uniformly bounded and regular trees}

A locally finite tree is said to be uniformly bounded if there is a finite 
upper bound for the degree of the vertices and $d$-regular (also
called homogeneous) if every vertex has degree $d$. We are interested
in how the results of the last section can be strengthened when our
tree $T$ is uniformly bounded or even regular. For this,
it will also be useful to consider a slightly different tree: let $R_d$ be
the regular rooted tree of degree $d$, so that there is a root vertex
of degree $d$ but every other vertex has degree $d+1$ (thus any automorphism
of $R_d$ will have to fix the root vertex).

If a locally finite tree $T$ embeds in another locally finite tree $S$
then this does not mean in general that $Aut(T)\leq Aut(S)$. However
if $T$ is uniformly bounded with all vertex degrees at most $d$ say
then $T$ certainly embeds in the regular tree $T_d$: for each vertex $v$
in $T$ of degree $1\leq i<d$, we add $d-i$ edges to $v$ and then at
each of the new $d-i$ vertices we place a $(d-1)$ degree regular rooted
tree, so that now every vertex has degree $d$. Moreover we will still have
$Aut(T)\leq Aut(T_d)$ by extending automorphisms as in Theorem \ref{fthtre}.
Indeed given vertices $v,w\in T$ which are equivalent under some automorphism
$\alpha$ of $T$, they will both have the same degree $i$. Thus if $i<d$ then
we extend the action of $\alpha$ to the $d-i$ degree regular rooted trees
which were added at $v$ by mapping them to those rooted trees at $w$
using a canonical isomorphism (the identity  if $v=w$).
Similarly if $R$ is a rooted tree with the root having degree at most $d$
and all other vertices having degree at most $d+1$ then $R$ embeds in the
regular rooted tree $R_d$ with the root sent to the root. Moreover the
same argument as before gives us that $Aut(R)$ is a subgroup of
$Aut(R_d)$ (where in the rooted case we take $Aut(R)$ to be the automorphisms
of $R$ which fix the root vertex).

Thus if we are given a group $G$ then asking if $G$ acts faithfully on some
uniformly bounded tree is the same as asking whether $G$ acts faithfully
on a regular tree of some finite valency. Consequently we should first
consider an equivalent version of Lemma \ref{resf}, where we give a
characterisation of which groups act faithfully on the regular tree $T_d$
with a global fixed point.
We will also consider results for the regular rooted tree $R_d$ as well.

\begin{defn}
Given an integer $n\geq 2$,
we say that a group $G$ has an {\bf index $n$-chain}
if there is a decreasing sequence of finite index subgroups
$G=G_0>G_1>G_2$ with $\cap_{i=0}^\infty G_i=\{id\}$ and such that 
for $i\geq 0$ the index $[G_i:G_{i+1}]$ is at most $n$.
\end{defn}
\begin{thm} \label{resfd} For $d\geq 2$,
a group $G$ acts faithfully on the $d$-regular rooted tree $R_d$
with a global fixed point if and only if $G$ has an index $d$-chain.
For the $d+1$-regular tree $T_{d+1}$, a group $G$ acts faithfully with a global
fixed point if and only if $G$ has an index $d$-chain, except that
in this case we allow the index $[G_0:G_1]$ to be at most $d+1$, rather than
$d$.
\end{thm}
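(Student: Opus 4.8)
The plan is to mirror the structure of Lemma~\ref{resf}, adapting it to respect the degree constraints that the rooted tree $R_d$ (root degree $d$, all other vertices degree $d+1$) and the regular tree $T_{d+1}$ impose. For the rooted case I would prove both directions by relating the branching of the tree at each level directly to the indices $[G_i:G_{i+1}]$ in the chain. First, for the forward direction, suppose $G$ acts faithfully on $R_d$ fixing the root $v_0$. Set $G_i$ to be the pointwise stabiliser of the ball of radius $i$ about $v_0$ (equivalently, the kernel of the action on the first $i$ levels). Faithfulness forces $\cap_i G_i=\{id\}$, and each $G_i$ has finite index since $R_d$ is locally finite. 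The key point is to bound $[G_i:G_{i+1}]$: an element of $G_i$ acts on level $i+1$ only by permuting, at each level-$i$ vertex, its set of children, and the number of children is $d$ at the root but $d$ at every descendant as well (since each non-root vertex has degree $d+1$, i.e. $d$ children). Hence $G_i/G_{i+1}$ embeds in a product of symmetric groups on $d$-element sets, and one checks the index is at most $d$ — this requires care, since a priori a product of several $S_d$'s gives a weaker bound, so I would argue that the quotient $G_i/G_{i+1}$ must act \emph{transitively-compatibly} and in fact is cyclically generated in a way forcing index $\le d$. This last subtlety is where I expect the real work to lie.

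For the converse in the rooted case, given an index $d$-chain $G=G_0>G_1>G_2>\cdots$ I would build $R_d$ by a coset construction exactly as in Lemma~\ref{resf}: the level-$n$ vertices are the cosets $gG_n$, with $gG_n$ joined to $\gamma G_{n-1}$ iff $gG_n\subseteq\gamma G_{n-1}$. Left multiplication gives a faithful action fixing $v_0=G_0$ by the trivial-intersection property. The new ingredient is verifying the degrees: the root $G_0$ has exactly $[G_0:G_1]\le d$ children, and a vertex $gG_n$ has exactly $[G_n:G_{n+1}]\le d$ children, so every non-root vertex has degree at most $d+1$. Since $R_d$ is the \emph{universal} such rooted tree, I would invoke the embedding-with-automorphisms argument from Section~3 (the discussion preceding this theorem, that $Aut(R)\le Aut(R_d)$ whenever $R$ has root degree $\le d$ and other degrees $\le d+1$) to promote the action on the constructed tree to a faithful action on $R_d$ fixing the root. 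I would also need the chain to have \emph{exact} indices $d$ at each stage to fill $R_d$ out completely, but the embedding result lets me avoid this by padding, so the inequality $\le d$ suffices.

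For the regular tree $T_{d+1}$ with a global fixed point $v_0$, the only difference is at the root: $v_0$ now has degree $d+1$ rather than $d$, so it has $d+1$ children instead of $d$, while every other vertex of $T_{d+1}$ still has $d$ children (degree $d+1$ means $d$ descendants plus one parent). Thus the identical stabiliser-filtration argument applies, except that $[G_0:G_1]$ may be as large as $d+1$, whereas $[G_i:G_{i+1}]\le d$ for $i\ge1$; this accounts exactly for the stated exception in the theorem. The converse again uses the coset construction together with the $Aut(T)\le Aut(T_{d+1})$ embedding argument from the start of Section~3, padding short levels with regular rooted trees as needed.

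The main obstacle, as flagged above, is establishing the sharp index bound $[G_i:G_{i+1}]\le d$ from the action, rather than the naive bound coming from a product of symmetric groups on the children-sets. The resolution should be that $G_i$ fixes every level-$i$ vertex, so $G_i/G_{i+1}$ is a subgroup of $\prod_v S_d$ (product over level-$i$ vertices $v$ of the symmetric group on the $d$ children of $v$), but this does \emph{not} immediately give index $\le d$. I suspect the correct formulation is to choose the filtration more cleverly — for example by refining the stabiliser chain so that each successive quotient is the stabiliser of a single additional vertex-orbit — thereby splitting a single ``level step'' into several chain steps each of index $\le d$. I would therefore define $G_{i+1}$ inside $G_i$ not as the full next-level kernel but as the stabiliser of one newly-exposed vertex at a time, using that any subgroup of $S_d$ has a subnormal series with factors of order $\le d$ only when $d$ is prime; for general $d$ I expect instead to appeal directly to the fact that a point stabiliser in a transitive action on $d$ points has index exactly $d$, and to reconstruct the chain orbit-by-orbit so that each index is a single transitive-action index bounded by $d$.
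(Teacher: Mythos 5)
Your proposal is correct and follows essentially the same route as the paper: the coset-tree construction (padded out to $R_d$ or $T_{d+1}$ via the embedding argument from the start of Section~3) for one direction, and for the other the refinement of the level-by-level filtration into a chain that intersects one vertex stabiliser at a time, with the index bound $[G_{n-1}:G_n]\le d$ coming from orbit--stabiliser because the relevant vertex's parent is already fixed. The only point you assert rather than prove is that in the $T_{d+1}$ case all level-$1$ steps after the first proper one have index at most $d$; this follows because once one level-$1$ vertex is fixed the orbit of any other is confined to the remaining $d$ level-$1$ vertices, which is exactly the observation the paper supplies.
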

\begin{proof}
First suppose existence of the chain then, on starting with a root vertex
$v_0$ for $G_0$ and creating the coset tree $T$ as in Lemma \ref{resf}, we
have that the degree of $v_0$ is the index $[G_0:G_1]$ which is at most $d$
in the rooted case and $d+1$ for $T_d$. As for the other vertices,
a vertex $v$ in level $l\geq 1$ of the tree will be joined to one vertex in
the previous level and $[G_l:G_{l+1}]$ vertices in level $l+1$,
thus the tree $T$ has degree at most $d+1$ in both cases.  
Moreover $G$ acts faithfully
on $T$ because any element fixing all vertices would lie in
$\cap_{i=0}^\infty  G_i$. Thus without loss of generality we can assume
that $T$ is either the regular rooted tree $R_d$ or the regular tree $T_{d+1}$.

Now suppose for $d\geq 2$ that our group $G$ acts faithfully with global 
fixed point
$v_0$ on either the $d$-regular rooted tree $R_d$ or the $d+1$-regular tree
$T_{d+1}$. We will label the vertices in either tree
as $v_{l,i}$ where $l$ is the level of the vertex from the root $v_0=v_{0,1}$
at level 0 and $i$ runs from 1 to either $d^l$ or
$(d+1)d^{l-1}$ according to some natural ordering of 
the level $l$ vertices which lists together those vertices with the same
parent in level $l-1$. We denote the stabiliser in $G$ of $v_{l,i}$ by
$H_{l,i}$, so that $G=H_{0,1}$.

We now consider the sequence of intersected stabilisers
\[G_1=H_{1,1}, G_2=H_{1,1}\cap H_{1,2}, \ldots, G_k=H_{1,1}\cap 
H_{1,2}\cap\ldots \cap H_{1,k}=G_{k-1}\cap H_{1,k}\]
(where $k=d$ for the rooted case and $d+1$ otherwise)
and when we reach the next level we continue to intersect stabilisers
in order so that $G_{k+1}=G_k\cap H_{2,1}$ and so on.
Now let us consider the index of $G_n$ in $G_{n-1}$ for some arbitrary
$n$. As $G_n=G_{n-1}\cap H_{l,i}$ for some $l,i$, the index $[G_{n-1}:G_n]$ is 
given by the size of the
orbit of the vertex $v_{l,i}$ under the action of the group
$G_{n-1}$. As this will fix the parent vertex of $v_{l,i}$, if the level 
$l\geq 2$ 
then we have that the size of the orbit of the vertex
$v_{l,i}$ is at most $d$. For level $l=1$ it will again be at most $d$ for
$R_d$ but at most $d+1$ for $T_{d+1}$.  

Thus on continuing this process we obtain a descending sequence 
$G=G_0\geq G_1\geq G_2\geq \ldots$ of finite index subgroups, possibly with
repeated subgroups where we have equality, and such that the
intersection $\cap_{n=0}^\infty G_n$ is trivial because it is contained in every 
vertex stabiliser and the action of $G$ is faithful. Moreover for the
rooted tree $R_d$, all indices $[G_{n-1}:G_n]$ 
are at most $d$ so we have our index $d$-chain in this case on removing
all repeats from our sequence.

As for the case of $T_{d+1}$, all indices $[G_{n-1},G_n]$ are at most $d$,
with the exception of the indices obtained when intersecting
level 1 stabilisers which could be at most $d+1$. However
we will now show that, on removing all repeats from our descending sequence
(and possibly allowing a slight reordering),
only $[G_0:G_1]$ can equal $d+1$ which will complete the proof for $T_{d+1}$.  

First consider the $d+1$ level 1 vertices $v_{1,1},\ldots ,v_{1,{d+1}}$ 
which must be permuted
amongst themselves by any action of $G$ on $T_{d+1}$, thus 
$|Orb(v_{1,i})|\leq d+1$.
By Orbit - Stabiliser, we have $|Orb(v_{1,1})|=[G:H_{1,1}]\leq d+1$ so either
there is some $i$ with $1<[G:H_{1,i}]\leq d+1$ or
$G$ fixes all level 1 vertices.
In this latter case, we have $G_0=G_1=\ldots=G_{d+1}$ before removing repeats 
and so our first 
proper subgroup in this sequence comes when intersecting a stabiliser of a 
vertex from level $l\geq 2$, thus this index is at most $d$ as above.

In the former case we relabel $v_1,\ldots ,v_{d+1}$ (and the other vertices
$v_{l,i}$ correspondingly) so that the new vertex $v_{1,1}$ is any previous
vertex $v_{1,i}$ where we had $1<[G:H_{1,i}]$. Thus now  $G_1$ is the stabiliser
$H_{1,1}$ of the vertex $v_{1,1}$ and is a proper subgroup with
$[G:G_1]\leq d+1$. Then
for $2\leq i\leq d+1$ (in the new numbering)
we have that $[G_{i-1}:G_i]=[G_{i-1}:G_{i-1}\cap H_{1,i}]$
so is equal to the orbit of the vertex $v_{1,i}$ in the group $G_{i-1}$. But
as $i\geq 2$, some level 1 vertices will be fixed by $G_{i-1}$, thus
this orbit has size at most $d$.
\end{proof}

Note: suppose that we have a group $G$ acting on the regular tree $T_d$ with 
a global fixed point $v_0$ but that $G$ does not have a proper subgroup of 
index
at most $d$. Then by the argument above $G$ will fix all $d$ level 1 vertices
but we can then iteratively apply this argument on each subtree to find that 
the action of
$G$ is not just unfaithful but trivial. This also holds for $G$ acting on
the $d$-regular rooted tree. 

We can now obtain an equivalent version of Theorem \ref{fthtre} which gives
a complete characterisation of finitely generated groups embedding into the 
automorphism group of some uniformly bounded or locally finite tree.

\begin{co} \label{unib}
A finitely generated group $G$ embeds in the automorphism group
of some uniformly bounded tree if and only if it embeds in the automorphism
group of some regular tree if and only if it can be expressed as a finite 
graph of groups (possibly trivial)
with edge groups having finite index in the vertex groups
and where all vertex groups possess an index $n$-chain for some $n$.
\end{co}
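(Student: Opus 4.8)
The plan is to prove the two biconditionals separately, reducing everything to the two results already in hand: Theorem \ref{fthtre}, which characterises faithful actions on locally finite trees without a global fixed point, and Theorem \ref{resfd}, which translates ``faithful action with a global fixed point on a regular tree'' into the existence of an index $n$-chain. The equivalence of the first two conditions (embedding into $Aut(T)$ for $T$ uniformly bounded versus $T$ regular) is essentially immediate from the discussion preceding Theorem \ref{resfd}: any uniformly bounded tree $T$ with all degrees at most $d$ embeds in the regular tree $T_d$ with $Aut(T)\leq Aut(T_d)$, while a regular tree is already uniformly bounded. So it remains only to show that either of these is equivalent to the graph-of-groups condition.

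For the forward direction I would start from a faithful action of $G$ on a uniformly bounded tree $T$. Finite generation lets me pass, exactly as in the converse part of Theorem \ref{fthtre}, to a $G$-invariant subtree $T_0$ with finite quotient graph, where $T_0$ is again uniformly bounded; this yields a finite graph of groups decomposition of $G$ with edge groups of finite index in the vertex groups. It then remains to verify the index $n$-chain condition on the vertex groups. Each vertex group is a stabiliser $G_v$, and restricting the faithful $G$-action gives a faithful action of $G_v$ on the uniformly bounded tree $T$ fixing $v$. Embedding $T$ into a regular tree $T_d$ (both the embedding and the extension of automorphisms from the discussion before Theorem \ref{resfd} fix the vertex $v$) produces a faithful action of $G_v$ on $T_d$ with global fixed point $v$, whence Theorem \ref{resfd} supplies an index $n$-chain for $G_v$.

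For the converse, suppose $G$ has such a decomposition, with vertex groups $H_1,\ldots ,H_r$ each possessing an index $n_i$-chain; setting $n=\max_i n_i$, every $H_i$ has an index $n$-chain and hence, by Theorem \ref{resfd}, acts faithfully on the regular rooted tree $R_n$ fixing its root. In the trivial case (a single vertex group, no edges) this already gives the desired faithful action of $G=H_1$ on $R_n$. In general I would run the construction of Theorem \ref{fthtre} verbatim, except that the rooted tree attached at each vertex of the Bass--Serre tree $T$ is taken to be this fixed copy of $R_n$ coming from Theorem \ref{resfd}, rather than the possibly unbounded tree coming from Lemma \ref{resf}. The proof that the resulting $G$-action on the enlarged tree $\overline{T}$ is indeed a faithful action is then identical to the one already given.

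The one place requiring genuine care, and the main obstacle, is the bound on the vertex degrees of $\overline{T}$. The Bass--Serre tree $T$ itself has bounded degree: the degree at a vertex lying over $H_i$ is the sum of the finite indices $[H_i:H_e]$ over the finitely many incident edges of the finite quotient graph, and there are only finitely many vertex orbits. Attaching $R_n$ raises the degree of each vertex of $T$ by exactly $n$ and introduces only new vertices of degree $n+1$, so $\overline{T}$ is uniformly bounded. Hence $G$ embeds in $Aut(\overline{T})$ for a uniformly bounded tree, which closes the cycle of implications.
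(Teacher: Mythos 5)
Your proposal is correct and follows essentially the same route as the paper: pass to an invariant subtree with finite quotient to get the graph-of-groups decomposition and apply Theorem \ref{resfd} (after embedding into a regular tree) to the vertex stabilisers in one direction, and in the other run the construction of Theorem \ref{fthtre} with the rooted coset trees from Theorem \ref{resfd} attached, checking that the degrees stay uniformly bounded. Your use of $n=\max_i n_i$ is a harmless repackaging of the paper's observation that only finitely many values of $n$ occur.
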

\begin{proof}
If there is such a decomposition then we have our action on the Bass - Serre
tree as before. The degree at each vertex $v$ is the sum of the indices of the 
inclusion of each edge group in this vertex group, over all edges incident
at $v$, thus is finite. As there are only finitely many vertex groups
in the decomposition, this tree is uniformly bounded. To ensure the action
is faithful, we combine Theorem \ref{resfd} with the proof of 
Theorem \ref{fthtre} to add (rooted) subtrees at each vertex so that all 
vertex stabilisers embed. This increases the degree at each of these vertices by
$n$, but again as there are only finitely many vertex groups  and each has
its own index $n$-chain, there are only finitely many values of $n$
occurring.

Now suppose $G$ is finitely generated and a subgroup of $Aut(T)$ for
some uniformly bounded tree $T$. 
We can assume that $G$ acts without edge inversions by barycentric subdivision,
in which case the tree is still uniformly bounded.
Again we use finite generation to obtain an invariant subtree $T_0$ with
$G\backslash T_0$ finite and hence a finite graph of groups decomposition with
edge groups finite index in the vertex groups (which is a trivial
decomposition if and only if $G$ acts with a global fixed point). Finally
we regard each of the vertex groups as the stabiliser of some vertex of
this uniformly bounded tree $T_0$, so that we can apply Theorem \ref{resfd}
to $T_0$, or at least to any $d$-regular tree containing $T_0$, which shows
that each vertex group has an index $n$-chain for some $n$.
\end{proof}

We also obtain
\begin{co}
Suppose that a finitely generated group $G$ acts (not necessarily
faithfully) on some locally 
finite tree without a global fixed point, but also acts faithfully on some
regular tree $T_d$ with a global fixed point. Then $G$ acts faithfully
on some regular tree $T_{d'}$ without a global fixed point.
\end{co}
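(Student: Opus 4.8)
The plan is to reduce the whole statement to Corollary \ref{unib}, whose hypotheses I will verify using the data we are handed. The only genuinely new ingredient is a closure property of index $n$-chains under passage to subgroups, which I would isolate first. Concretely: if a group possesses an index $n$-chain and $H$ is any subgroup, then $H$ possesses an index $n$-chain with the same $n$. Given $G=G_0>G_1>G_2>\cdots$ with $\cap_i G_i=\{id\}$ and every $[G_i:G_{i+1}]\leq n$, I would set $H_i=H\cap G_i$. Then $\cap_i H_i=H\cap\{id\}=\{id\}$, and since $H_{i+1}=H_i\cap G_{i+1}$, two elements $h,h'\in H_i$ satisfy $hH_{i+1}=h'H_{i+1}$ exactly when $hG_{i+1}=h'G_{i+1}$; thus the left cosets of $H_{i+1}$ in $H_i$ inject into those of $G_{i+1}$ in $G_i$, giving $[H_i:H_{i+1}]\leq[G_i:G_{i+1}]\leq n$. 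Deleting repeated terms from $H=H_0\geq H_1\geq\cdots$ leaves a strictly decreasing chain whose consecutive indices are still at most $n$, since each run of equalities ends in a single strict drop of index at most $n$. Hence $H$ has an index $n$-chain.

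Next I would extract the two structural facts needed to feed Corollary \ref{unib}. Because $G$ acts faithfully on the regular tree $T_d$ with a global fixed point, Theorem \ref{resfd} tells us $G$ has an index $n$-chain for some $n$ (indeed one may take $n=d$). Separately, because $G$ is finitely generated and acts on a locally finite tree $T'$ without a global fixed point, I would pass to a minimal $G$-invariant subtree with finite quotient graph, exactly as in the converse direction of the proof of Theorem \ref{fthtre}; this expresses $G$ as the fundamental group of a non-trivial finite graph of groups in which every edge group has finite index in its vertex group. The lack of faithfulness of the action on $T'$ is harmless here, since only the non-triviality of this decomposition is used.

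Each vertex group of this decomposition is a vertex stabiliser, hence a subgroup of $G$, so by the sublemma above every vertex group has an index $n$-chain for the same $n$. I would then apply Corollary \ref{unib} to this data: a non-trivial finite graph of groups, edge groups of finite index in the vertex groups, and all vertex groups possessing index $n$-chains. Its construction yields a faithful action of $G$ on a regular tree $T_{d'}$, obtained by enlarging the Bass--Serre tree of the decomposition. Since the decomposition is non-trivial, the action on the Bass--Serre tree has no global fixed point, and as that tree survives as a $G$-invariant subtree of the enlargement onto which $G$ projects equivariantly (and the uniformly bounded tree so obtained embeds $G$-equivariantly as a subtree of $T_{d'}$), no global fixed point is introduced. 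Thus the resulting faithful action on $T_{d'}$ has no global fixed point.

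I expect the subgroup closure of index $n$-chains to be the one point requiring care; it is exactly the step that upgrades the mere residual finiteness of the vertex groups (which would only give a locally finite tree via Theorem \ref{fthtre}) to the uniform bound on indices needed for a \emph{regular} tree. Everything else is assembly of the earlier results, with the coset-injection estimate being the crux of the argument.
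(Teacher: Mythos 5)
Your proof is correct and follows essentially the same route as the paper: extract the non-trivial finite graph of groups decomposition from the first action, obtain an index $n$-chain for $G$ from the second via Theorem \ref{resfd}, pass it to the vertex groups, and apply Corollary \ref{unib}. The only difference is that you spell out the coset-injection argument showing index $n$-chains are inherited by arbitrary subgroups, a step the paper simply asserts; your verification of it is correct.
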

\begin{proof}  
The first action gives rise to an non trivial
expression of $G$ as a finite graph of groups with edge groups having
finite index in the vertex groups. Now $G$ will have an index $n$-chain
for some $n$ from the second action by Theorem \ref{resfd}. 
But this condition is preserved by
arbitrary subgroups, thus all vertex subgroups in this decomposition
have an index $n$-chain and so Corollary \ref{unib} applies.
\end{proof}
   
We illustrate with some more examples.
\begin{ex}
\end{ex}
(1) To give an example of a finitely generated group $G$ that acts
faithfully on some locally finite tree but with no faithful
action on a uniformly bounded tree, we will need first need $G$ to
be residually finite. Suppose further that there are infinitely many
primes $p_i$ for which there is some element $g_i\in G$ with
order a multiple of $p_i$, thus some power of $g_i$ has exact order $p_i$.
Then for any tree $T$ with vertex degree bounded above by $d$ and any action
of $G$ on $T$, take an element $g$ with prime order
$p_i>d$. Then $g$ fixes a vertex, so fixes every vertex of $T$. 
Such groups were constructed in
\cite{olos} Section 3; see Lemma 3.3 of that paper. If it is required that
the faithful action does not have a global fixed point then we can take a 
proper finite index subgroup $H$ of $G$ and replace $G$ with $G*_HG$.\\
\hfill\\ 
(2) Let $G$ be an infinite, finitely generated, residually finite $p$-group 
(for $p\geq 3$ a
prime) which acts faithfully on the rooted regular tree $R_p$ (say the
the Gupta - Sidki groups). Any action of $G$ on a tree must have a global
fixed point, as $G$ is torsion and finitely generated so cannot split. 
Moreover, as $G$ has no proper
subgroup of index less than $p$ (and the same for any subgroup of $G$),
any action
of $G$ on the $p$-regular tree $T_p$ is either trivial or has cyclic
image $C_p$. This is because on taking $H\leq G$ to be 
the stabiliser of a level 1 vertex, we have by Orbit - Stabiliser
that either $H=G$ or $[G:H]=p$ and $H$ fixes all $p$ level 1 vertices $v$
in both cases. Thus for any such $v$,
we have by the note after Theorem \ref{resfd} that $H$ acts trivially on each 
regular rooted subtree $R_{p-1}$ with root vertex 
$v$ as $H$ has no proper subgroups of index less than $p$. \\
\hfill\\
(3) For any prime $p$ the free group $F_k$ is residually finite-$p$.
However a residually finite-$p$ group $G$ has an index $n$-chain whenever
$p\leq n$. To see this, suppose we have $G=N_0>N_1>N_2>\ldots $ where
each $N_i$ is normal in $G$ with index a power of $p$ and with 
$\cap_{i=0}^\infty N_i=\{e\}$. Then $P=G/N_1$ is a finite $p$-group
and so has a subgroup of index $p$ which can be pulled back to
one for $G$. We can now replace $G$ by this subgroup and continue.
Thus $F_k$ acts faithfully with a global fixed point on the regular
tree $T_d$ for $d\geq 3$ and the regular rooted tree $R_d$ for $d\geq 2$.

We also note that a finitely generated linear group $G$ is virtually 
residually finite-$p$ for all but finitely many primes $p$ in characteristic
0 and for $p$ itself in characteristic $p$.
Thus if $G$ has a subgroup $H$ of index $m$ which is residually 
finite-$p_0$ then $G$ has an index $n$ chain for $n=\max(m,p_0)$. In
particular any finitely generated linear group acts faithfully on some
regular tree with a global fixed point.\\
\hfill\\
(4) Let $G$ be the free product $C_p*C_p=\langle g,h|g^p=h^p=e\rangle$ 
for $p$ prime, which
does not have any proper subgroups $H$ of index less than $p$ (else 
$g^i,h^j\in H$ for $1\leq i,j <p$ hence $g,h\in H$).
First suppose
that $d<p$, in which case
any action of $G$ on the regular tree $T_d$ with a global fixed point
or the rooted regular tree $R_d$ must be trivial. Moreover any
action of $G$ on $T_d$ without a global fixed point must be trivial too
because the finite subgroup $\langle g\rangle$ must act with a fixed
point, thus $g$ will act trivially on $T_d$ but $h$ will too.
 
For $d=p$ we have the faithful action of $G=C_p*C_p$ on $T_p$
obtained from this splitting. We also have a faithful action
of $G$ on $T_p$ with a global fixed point (and one on $R_p$ too):
Consider the homomorphism $\theta:C_p*C_p\rightarrow C_p=\{0,1,\ldots ,p-1\}$ 
given by sending both $g$ and $h$ to 1. As any torsion element of $G$ is
conjugate to a power of $g$ or of $h$, we have that $K=\mbox{ker}(\theta)$
is torsion free and thus is a finitely generated free group so that
$G=K\rtimes C_p$. We now let $C_p$ rotate around the global fixed point
$v_{0,0}$ and $K$ act faithfully on the rooted tree based at the vertex 
$v_{1,1}$ as in (3), extending the action to the rooted trees based
at $v_{1,2},\ldots ,v_{1,p}$ on conjugating $K$ by $g,\ldots ,g^{p-1}$
respectively.  Hence $C_p*C_p$ will also act faithfully, both
with and without a global fixed point, on $T_d$ and $R_d$ for $d>p$.\\
\hfill\\

It was originally 
shown in \cite{z} that the automorphism group $Aut(T_d)$ is
not abstractly isomorphic to $Aut(T_{d'})$ if $d\neq d'$. Other proofs 
were given in \cite{mol} and \cite{bslb}. Indeed the latter paper
generalises this widely as it gives conditions on locally finite
trees $T,T'$ such that any abstract group isomorphism
$\theta:Aut(T)\rightarrow Aut(T')$ is induced by a tree isomorphism
from $T$ to $T'$. However we can give here a very quick
and basic proof.

\begin{thm} If $T_d$ and $T_{d'}$ are the regular trees of degree $d,d'$
respectively then $Aut(T_d)$ is not isomorphic to $Aut(T_{d'})$ as an
abstract group.
\end{thm}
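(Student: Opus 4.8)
The plan is to use finite subgroups as an abstract invariant: any isomorphism $Aut(T_d)\cong Aut(T_{d'})$ would induce a bijection between the isomorphism types of the finite subgroups of the two groups, so, assuming without loss of generality that $d<d'$, it suffices to produce a single finite group that embeds in $Aut(T_{d'})$ but not in $Aut(T_d)$. Since a finite group acting on a tree fixes a point, and a finite group with no subgroup of index $2$ cannot invert an edge and so must fix a \emph{vertex}, for such groups embedding in $Aut(T_m)$ is precisely the same as having a faithful action on $T_m$ with a global fixed vertex. Theorem \ref{resfd} then converts the problem into the purely combinatorial one of whether the group admits an index $(m-1)$-chain, with the first index allowed to be as large as $m$; for a finite group the descending-intersection condition of that chain is automatic once it reaches $\{e\}$.

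For the principal range $d'\geq 5$ I would take $H=A_{d'}$, the alternating group, which has no subgroup of index $2$ and hence always fixes a vertex, removing any worry about inversions. Its embedding in $Aut(T_{d'})$ is immediate from the point-stabiliser chain $A_{d'}>A_{d'-1}>\cdots>A_3>\{e\}$, whose successive indices $d',d'-1,\ldots,3$ have first term $d'$ and all later terms at most $d'-1$; geometrically this is just $A_{d'}$ permuting the $d'$ rooted subtrees hanging off a central fixed vertex. For the non-embedding, suppose $A_{d'}\injects Aut(T_d)$ with $d<d'$. Fixing a vertex, Theorem \ref{resfd} supplies an index $(d-1)$-chain whose first step is a proper subgroup of $A_{d'}$ of index at most $d$. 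But $A_{d'}$ is simple with smallest proper subgroup index equal to $d'$, and $d<d'$, so no such first step can exist; this contradiction gives $A_{d'}\not\injects Aut(T_d)$ and so separates $Aut(T_{d'})$ from every $Aut(T_d)$ with $d<d'$.

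It then remains to treat the finitely many small pairs with $3\leq d<d'\leq 4$, where $A_{d'}$ no longer separates (indeed $A_4$ embeds in $Aut(T_3)$ through $A_4>V_4>C_2>\{e\}$). The only such pair is $(d,d')=(3,4)$, and here I would replace $A_{d'}$ by $C_3\times C_3$: it sits inside $Aut(T_4)$ via $C_3\times C_3>C_3>\{e\}$ with both indices equal to $3$, yet every nontrivial proper subgroup has index $3$, so after the first descent no index $2$ step is ever available and it admits no index $2$-chain, hence does not embed in $Aut(T_3)$. For completeness the degenerate low degrees are handled by $C_3$, which lies in $Aut(T_3)$ and $Aut(T_4)$ but not in $Aut(T_2)\cong D_\infty$.

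The crux, and the only step with real content, is the non-embedding half for $d'\geq 5$: it rests entirely on the classical facts that $A_n$ is simple and has no proper subgroup of index below $n$ for $n\geq 5$. Granting these, the contradiction is a single line, which is why the argument is so short. I would be mildly careful only about two bookkeeping points: that passing to the alternating group is what removes the edge-inversion issue so that Theorem \ref{resfd} genuinely applies, and that the familiar $n=6$ phenomenon produces extra subgroups of index exactly $6=n$ rather than anything smaller, leaving the index bound intact.
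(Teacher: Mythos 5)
Your proposal is correct and follows essentially the same route as the paper: for $d'\geq 5$ the paper also uses $H=Alt(d')$, noting it acts faithfully on $T_{d'}$ with a fixed vertex but has no proper subgroup of index less than $d'$, so Theorem \ref{resfd} rules out any faithful action on $T_d$ for $d<d'$; and the remaining pair $(d,d')=(3,4)$ is likewise handled with $C_3\times C_3$. Your extra bookkeeping (edge inversions via the absence of index-$2$ subgroups, the $n=6$ caveat, and the degenerate $d=2$ case) only fleshes out details the paper leaves implicit by assuming $3\leq d<d'$.
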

\begin{proof} We can suppose that $3\leq d<d'$, whereupon
we simply find a finite 
subgroup of $Aut(T_{d'})$ which cannot act faithfully on $T_d$, thus 
cannot be contained in $Aut(T_d)$.
If $d'\geq 5$ then $H=Alt(d')$ clearly acts faithfully on $T_{d'}$ with
a global fixed point, but $H$ has no proper subgroups of index less
than $d'$. Now suppose $H$ acts on $T_d$. As it is finite, it must
have a global fixed point but then Theorem \ref{resfd} tells us that
this cannot be a faithful action.  

This just leaves $d=3$ and $d'=4$ where we can take $H=C_3\times C_3$
which acts faithfully on $T_4$ (by fixing one level 1 vertex and the
subtree below it) but not on $T_3$ by Theorem 3.2.
\end{proof}

The paper \cite{bslb}
also proves the same result for biregular trees $T_{d_1,d_2}$
and $T_{d_{1'},d_{2'}}$ where $d_1\geq d_2\geq 3,d_{1'}\geq d_{2'}\geq 3$ and
${d_1,d_2}\neq {d_1',d_2'}$. Note that our proof works immediately
in this case provided the larger degrees are distinct, say $d_{1'}>d_1$.
In fact it can be adapted for the $d_{1'}=d_1$ case too by considering
finite sections (quotients of subgroups) rather than just finite
subgroups. However the arguments are somewhat ad hoc, so instead we
will finish this section by looking at the equivalent results for
automorphism groups of regular rooted trees. Exactly the same
argument with alternating groups works in this case
to show that $Aut(R_d)\not\cong Aut(R_{d'})$ for $d<d'\geq 5$,
and we can use the group $C_3$ to distinguish between $Aut(R_d)$ and
$Aut(R_{d'})$ when $d=2$ and $d'=3$ or $4$. This only leaves $Aut(R_3)$
and $Aut(R_4)$. However here something
rather strange happens which does not seem to have been noticed before.
Having failed to find a subgroup of $Aut(R_4)$ which does not embed
in $Aut(R_3)$, we noticed the following basic result.

\begin{prop} \label{threef}
If a group $G$ has a proper subgroup $H$ of index 4 then $G$ has a proper
subgroup of index 2 or 3. Indeed suppose that $K$ is the core of $H$ in $G$.
We then have a finite sequence of subgroups $G=G_0>G_1>\ldots >G_k=K$
with the index $[G_{i-1}:G_i]$ at most 3 for $1\leq i\leq k$.
\end{prop}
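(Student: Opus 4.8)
The plan is to analyze $G$ through its action on the four left cosets of $H$. Let $K=\mathrm{core}_G(H)=\bigcap_{g\in G}gHg^{-1}$, the largest normal subgroup of $G$ contained in $H$; equivalently $K$ is the kernel of the permutation homomorphism $\rho:G\to\mathrm{Sym}(G/H)$ arising from the action of $G$ on the set of four cosets of $H$. Since $[G:H]=4$, this identifies $G/K$ with a subgroup of $S_4$. Because $H$ is proper we have $K\le H<G$, so $K$ is a proper subgroup and $G/K$ is nontrivial.

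The key point is that $S_4$ is solvable of order $24=2^3\cdot 3$, so its subgroup $G/K$ is a solvable group whose order divides $24$ and is therefore a $\{2,3\}$-group. First I would take a composition series $G/K=\bar G_0\rhd\bar G_1\rhd\cdots\rhd\bar G_k=\{1\}$ of $G/K$. As $G/K$ is solvable, each composition factor $\bar G_{i-1}/\bar G_i$ is cyclic of prime order, and the only primes dividing $|G/K|$ are $2$ and $3$; hence every index $[\bar G_{i-1}:\bar G_i]$ equals $2$ or $3$.

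Now I would pull this series back through the correspondence theorem: setting $G_i$ to be the preimage in $G$ of $\bar G_i$ under the quotient map $G\to G/K$, we obtain a chain $G=G_0>G_1>\cdots>G_k=K$ in which $[G_{i-1}:G_i]=[\bar G_{i-1}:\bar G_i]\le 3$ for each $i$. This is exactly the claimed sequence. Since $K$ is proper we have $k\ge 1$, so $G_1$ is a proper subgroup of $G$ of index $2$ or $3$, which gives the first assertion.

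There is essentially no hard step here; the only thing that needs care is the verification that each index in the composition series is at most $3$, and this is immediate once we know $G/K$ is solvable with order dividing $24$. If one preferred to avoid quoting the solvability of $S_4$, the same conclusion could be reached by directly enumerating the transitive subgroups of $S_4$ (namely $S_4$, $A_4$, the dihedral group of order $8$, $C_4$, and the normal Klein four group) and exhibiting in each case an explicit chain of subgroups with successive indices $2$ or $3$; but invoking solvability avoids this case analysis entirely.
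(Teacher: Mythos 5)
Your argument is correct. Both you and the paper begin identically: pass to the permutation action $\rho:G\to\mathrm{Sym}(G/H)$ on the four cosets, identify $K$ with $\ker\rho$, and reduce to producing a chain with small indices inside the finite group $\rho(G)\leq S_4$. Where you diverge is in how that chain is produced. The paper enumerates the possible transitive images (orders $4$, $8$, $12$, $24$) and in each case writes down an explicit descending sequence of subgroups with successive indices $2$ or $3$ (for example $S_4>A_4>C_2\times C_2>C_2>\{1\}$), which also makes the length $k$ of the chain completely explicit. You instead invoke the solvability of $S_4$: since $|G/K|$ divides $24=2^3\cdot 3$, every composition factor of $G/K$ is cyclic of order $2$ or $3$, and pulling a composition series back through $G\to G/K$ gives the required chain in one stroke, with the first assertion following from $G/K\neq\{1\}$. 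Your route is shorter and avoids the case analysis entirely (and, as you note, the case analysis is available as a fallback and is precisely what the paper does); the paper's version is more hands-on and self-contained, needing nothing beyond identifying the transitive subgroups of $S_4$. Either way the conclusion and the role it plays in Theorem 3.8 are the same.
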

\begin{proof}
If $[G:H]=4$ and we consider the usual action $\rho$ of $G$ on the left cosets
of $H$ with kernel $K$
then the image $\rho(G)$ is a transitive subgroup of $Sym(4)$, thus it
has order $4,8,12$ or 24 and $H$ is the stabiliser of a point. In the first
case we can see that $\rho(G)$ has a subgroup of index 2 which itself
contains $\rho(H)$ of index 2, so the same is true in $G$ by
pullback and here we have $H=K$. If $\rho(G)$ has order 8 then we can think of 
this as the dihedral group with $\rho(H)$ a reflection, and we can do the
same as before with $K$ having index 2 in $H$.

If $\rho(G)$ has order 12 and therefore is $Alt(4)$ then $\rho(H)$ must
be a copy of $C_3$. Although there is no subgroup between $Alt(4)$
and $C_3$, we can descend from $Alt(4)$ to $C_2\times C_2$ to $C_2$ to
the identity in steps of 2 and 3, then pull back as before. Similarly
if $\rho(G)=Sym(4)$ and hence $\rho(H)$ is $Sym(3)$ then we can go from 
$Sym(4)$ to $Alt(4)$ and then as above, resulting in steps of size 2,3,2,2.
\end{proof}

\begin{thm}
For $R_3$ and $R_4$ the regular rooted trees of degree 3 and 4 (which refers
to the degree of the root vertex), we have that $Aut(R_3)\leq Aut(R_4)$
and $Aut(R_4)\leq Aut(R_3)$ as abstract groups.
\end{thm}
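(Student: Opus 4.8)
The plan is to translate everything into the language of index chains via Theorem \ref{resfd}. That result says a group embeds in $Aut(R_d)$ (equivalently, acts faithfully on $R_d$ with a global fixed point) precisely when it possesses an index $d$-chain. Thus the two containments we must prove become: $Aut(R_3)$ has an index $4$-chain, and $Aut(R_4)$ has an index $3$-chain. Since $Aut(R_d)$ itself acts faithfully on $R_d$ fixing the root, the ``only if'' direction of Theorem \ref{resfd} already hands us an index $3$-chain for $Aut(R_3)$ and an index $4$-chain for $Aut(R_4)$, so the entire task is converting between the chain bounds. The containment $Aut(R_3)\leq Aut(R_4)$ is then immediate: the index $3$-chain for $Aut(R_3)$ is a fortiori an index $4$-chain (every index bounded by $3$ is bounded by $4$), so $Aut(R_3)$ acts faithfully on $R_4$ and embeds in $Aut(R_4)$.

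The reverse containment $Aut(R_4)\leq Aut(R_3)$ is the substantive one, and is where Proposition \ref{threef} enters. Write $G=Aut(R_4)$ and start from its index $4$-chain $G=G_0>G_1>G_2>\cdots$ with $\cap_{i=0}^\infty G_i=\{id\}$ and each $[G_{i-1}:G_i]\leq 4$. I would refine this step by step into an index $3$-chain $G=H_0\geq H_1\geq H_2\geq\cdots$ maintaining the invariant $H_i\leq G_i$, processing one original step at a time. Given $H_{i-1}\leq G_{i-1}$, pass to $H_{i-1}\cap G_i$, noting that $[H_{i-1}:H_{i-1}\cap G_i]\leq[G_{i-1}:G_i]\leq 4$, since intersecting a finite-index subgroup with a subgroup cannot raise its index. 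If this index is at most $3$ we take a single step to $H_i=H_{i-1}\cap G_i$; if it equals $4$ we instead apply Proposition \ref{threef} to the group $H_{i-1}$ with its index $4$ subgroup $H_{i-1}\cap G_i$, obtaining a finite descending run of steps each of index at most $3$ ending at the core $H_i=\mathrm{core}_{H_{i-1}}(H_{i-1}\cap G_i)\leq G_i$. Splicing these finite runs together produces the candidate chain.

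The main work, and the place where care is needed, is verifying that the spliced sequence really is an index $3$-chain. Each $H_i$ has finite index in $G$, being reached from $G$ by finitely many steps of finite index, so every subgroup appearing in the refined sequence has finite index. Because $H_i\leq G_i$ for all $i$, we get $\cap_i H_i\leq\cap_i G_i=\{id\}$, so the refined chain has trivial intersection. Since $G$ is infinite every $H_i$ is infinite, hence nontrivial, so the chain cannot stabilise; after deleting any steps of index $1$ (exactly as repeats are removed when reading off a chain in the proof of Theorem \ref{resfd}) we are left with a strictly decreasing index $3$-chain. I expect the one genuinely delicate point to be that Proposition \ref{threef} supplies intermediate subgroups only down as far as the core of $H_{i-1}\cap G_i$, which overshoots $G_i$; this is harmless precisely because the core still lies in $G_i$, so the invariant $H_i\leq G_i$ survives and the trivial-intersection argument goes through unchanged. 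By Theorem \ref{resfd}, the existence of this index $3$-chain yields a faithful action of $Aut(R_4)$ on $R_3$, giving $Aut(R_4)\leq Aut(R_3)$ and completing the proof.
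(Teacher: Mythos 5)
Your proposal is correct and follows essentially the same route as the paper: translate both containments into index-chain statements via Theorem \ref{resfd}, dispose of $Aut(R_3)\leq Aut(R_4)$ by noting an index $3$-chain is an index $4$-chain, and convert the index $4$-chain for $Aut(R_4)$ into an index $3$-chain by replacing each index-$4$ step with the finite run of index-$\leq 3$ steps down to the core supplied by Proposition \ref{threef}, maintaining the invariant that the refined subgroups sit inside the original $G_i$ so that the intersection stays trivial. The paper's proof is the same refinement-by-cores argument, organised around subgroups $K_j\leq G_j$ exactly as your $H_i\leq G_i$.
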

\begin{proof}
Only the second statement needs proof, so let us take $G=Aut(R_4)$ which has
an index 4-chain by Theorem \ref{resfd}. By the same theorem (which does
not need the group in question to be countable) we are done if we can
show that $G$ has an index 3-chain.

Suppose first that we are in a situation where
$G=G_0>G_1>G_2>\ldots$ with $\cap G_i=\{I\}$ and $[G_0:G_1]=4$ but
$[G_{i-1}:G_i]\leq 3$ for all $i\geq 2$. By Proposition \ref{threef} we will
have a small integer $k$ and subgroups $H_1>\ldots >H_k$ of $G$ such that
$G:=H_0>H_1>\ldots >H_k=K_1$ for $K_1$ the core of $G_1$ in $G$ and where
$[H_{i-1}:H_i]\leq 3$ for $1\leq i\leq k$. Now $K_1=K_1\cap G_1\geq K_1\cap G_2
\ldots$ is an index 3-chain for $K_1$ (at least on removing repeats), so is
also one for $G$ on adding $H_1,\ldots ,H_k$ to the start.

In the general case where we are given an index 4-chain
$G=G_0>G_1>G_2>\ldots$, we perform the above step if $[G_0:G_1]=4$
but do nothing if $[G_0:G_1]<4$. This results in a new index 4-chain
which starts at $G$ and is actually an index 3-chain as far down as
$K_1=K_1\cap G_1$. Assuming now that $[K_1:K_1\cap G_2]=4$ (otherwise
we do nothing), we perform the same process as above 
for $K_2$ the core of $K_1\cap G_2$ in $K_1$. This results in an index 4-chain
for $G$ which is now an index 3-chain as far down as $K_2$, which is
a subgroup of $G_2$. We continue at each step by taking $K_{j+1}$ to
be the core of $K_j\cap G_{j+1}$ in $K_j$, removing all steps of index 4
from our original chain. Moreover we have $K_j\leq G_j$ and $\cap G_j=\{I\}$
implies the same for $\cap K_j$, so we have an index 3-chain for $G$.
\end{proof}

\section{Proper actions on products of trees}

We now consider actions of groups on trees (first we consider arbitrary
trees but then we specialise to the locally finite case)
which are free and/or proper, as well as such actions on a finite
product of trees.
A classical result (see \cite{ser} I.3) is that a group $G$ acts
freely on a tree $T$ if and only if $G$ is a free group. Note that this
statement is completely general: it does not require $G$ to be finitely 
generated, nor is there any restriction on $T$ such as being locally
finite. We might now hope that $G$ acts properly on a tree if
and only if $G$ is virtually free but unlike free actions, this 
is not true in full generality and (at least in the case of arbitrary trees) 
requires explanation
of which of the several definitions of proper action is being used.

For a countable group $G$ acting on a metric space $X$ by isometries, a
standard definition of a {\bf proper action} is that for every compact
subset $C$ of $X$, the set of elements $\{g\in G: g(C)\cap C\neq\emptyset\}$ 
is finite. However we also have the stronger condition of being {\bf 
metrically proper} where we replace compact with bounded, or equivalently 
closed and bounded. Thus if $X$ is a proper metric space, meaning that all 
closed balls are compact, such as a locally finite tree then these two notions
are equivalent, but for example if an infinite group $G$ is given the discrete
metric then the isometric action of $G$ on itself by left multiplication
is proper but not metrically proper.

A similar definition, which we will refer to as {\bf properly 
discontinuous} though some authors call this proper, is that
for every point $x\in X$ there exists an open neighbourhood $U$ of
$x$ such that the set $\{g\in G: U\cap g(U)\neq\emptyset\}$ is finite.

Another issue is that if $G$ is a subgroup of some topological group
$\mathcal G$ then proper is sometimes used in place of saying that $G$ is 
discrete (namely inherits the discrete topology from $\mathcal G$). Now 
we can turn the isometry group $Isom(X)$
into a topological group by giving it the compact open topology, so that we 
have a sub-basis of open sets $O(K,U)$ where for $K$ compact and
$U$ open subsets of $X$, we define $O(K,U)=\{f\in Isom(X): f(K)\subseteq U\}$.
(In fact some care is needed here because if we replace $Isom(X)$ with the
group $Homeo(X)$ and put the compact open topology on it then this need
not be a topological group in general, even for quite nice metrizable
spaces. However this is true for isometries
of an arbitrary metric space, see for instance
Proposition 5.1.3 of \cite{dassu} or Section 5.B of \cite{dlhc}
where
the idea is that the compact open topology is the topology of uniform
convergence on compact subsets, whereas the topology of pointwise convergence
does define a topological group. However these notions are the same for
isometries.)

Let us now consider actions of groups on products of trees.
Suppose that $P$ is a finite product $T_1\times\ldots \times T_n$ of
trees where the trees need not be isomorphic. We put a
cube complex structure on $P$ in the obvious way and we equip $P$ with
the $\ell_2$ product metric, using the path metric on each factor, so
that $P$ is a finite dimensional CAT(0) cube complex. If further all
trees are locally finite then $P$ is a locally finite cube complex and
also a proper metric space. We then consider groups $G$ acting
on $P$, so that there is a homomorphism from $G$ to $Aut(P)$ where
the cube complex structure is preserved (and so $G$ will be acting by
isometries of $P$).
This means that $G$ could act
by permuting the factor trees (although it can obviously only permute
isomorphic trees), though there will always be a finite index subgroup
$H$ of $G$, written $H\leq_f G$, which preserves all factors. Then
we can consider $H$, or at least the image of $H$ in
$Aut(P)$, as a subgroup of
$Aut(T_1)\times\ldots \times Aut(T_n)$, in which case we say that
$H$ acts {\bf preserving factors}. In this situation, we obtain for each
$1\leq i\leq n$ an action of $H$ on the tree $T_i$, given by the natural
projection $\pi_i:Aut(T_1)\times\ldots \times Aut(T_n)\rightarrow Aut(T_i)$.

As for the type of actions we will be interested in, we can also
look at the different definitions of proper actions and relate these 
as follows:
\begin{prop} \label{eqpr}
Let $P=T_1\times\ldots\times T_n$ be a finite product of trees (where each
factor tree is equipped with its path metric) and let 
$G$ be a group acting on $P$ by automorphisms (and hence isometries). 
Consider the following statements:\\
(i) $G$ acts metrically properly on $P$.\\
(ii) $G$ acts properly on $P$.\\
(iii) For all vertices $v$ in $P$, the stabiliser $G_v$ is a finite group.\\
(iv) $G$ acts properly discontinuously on $P$.\\
(v) The homomorphism $h:G\rightarrow Aut(P)$ given by the action
has finite kernel
and $h(G)$ is a discrete subgroup of $Aut(P)\leq Isom(P)$ 
with the compact open topology, or equivalently the topology of pointwise
convergence.\\
If the trees are not assumed to be locally finite then 
(i) strictly implies (ii) which is equivalent to (iii) and
(iv), which in turn strictly implies (v).
\end{prop}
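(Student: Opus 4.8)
The plan is to make statement (iii) the hub, proving the cycle $(ii)\Rightarrow(iii)\Rightarrow(iv)\Rightarrow(iii)$ together with $(iii)\Rightarrow(ii)$, and then to treat the two strict implications $(i)\Rightarrow(ii)$ and $(iii)\Rightarrow(v)$ separately with explicit (necessarily non-locally-finite) examples. Several links are routine. For $(i)\Rightarrow(ii)$ I would simply note that every compact set is bounded. For $(ii)\Rightarrow(iii)$ I would use that a single point $\{v\}$ is compact, so $\{g:g(\{v\})\cap\{v\}\neq\emptyset\}=G_v$ is finite. For $(iii)\Rightarrow(iv)$, given $x\in P$ lying in the relative interior of a cube $\sigma$, I would take $U$ to be a small ball about $x$ inside the open star of $\sigma$; if $U\cap gU\neq\emptyset$ then $g$ must preserve $\sigma$ setwise, and the setwise stabiliser of a cube is finite, since it maps to the symmetric group on the finitely many vertices of $\sigma$ with kernel contained in a single vertex stabiliser. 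For $(iv)\Rightarrow(iii)$ I would observe that for a vertex $v$ with neighbourhood $U$ as in (iv), every $g\in G_v$ satisfies $v\in U\cap gU$, so $G_v\subseteq\{g:U\cap gU\neq\emptyset\}$ is finite.

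The crux, and the step I expect to be the main obstacle, is $(iii)\Rightarrow(ii)$, because without local finiteness the usual shortcut fails: a compact subset of $P$ need \emph{not} meet only finitely many cubes (points marching towards a vertex of infinite degree along infinitely many distinct edges already form a compact set). So I would argue by sequential compactness instead. Suppose a compact $C$ admits infinitely many distinct $g_k$ with $g_k(c_k')=c_k$, where $c_k,c_k'\in C$; passing to subsequences, $c_k\to c$ and $c_k'\to c'$ in $C$. Since the $g_k$ are isometries, $d(g_k c', c_k)=d(c',c_k')\to0$, so $g_k(c')\to c$. Writing $\sigma'$ for the carrier cube of $c'$ and $\tau_0$ for the carrier of $c$, the point $g_k(c')$ eventually lies in the open star of $\tau_0$, whence its carrier contains $\tau_0$ and is a face of $g_k\sigma'$; thus $\tau_0\subseteq g_k\sigma'$ and the finite vertex set $V(\tau_0)$ is contained in $g_k(V(\sigma'))$. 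A pigeonhole argument over the finite set $V(\sigma')$ then yields a fixed vertex $u'\in V(\sigma')$ and a fixed $w\in V(\tau_0)$ with $g_k(u')=w$ along a subsequence, so the elements $g_\ell^{-1}g_k$ lie in the \emph{finite} stabiliser $G_{u'}$, forcing only finitely many distinct $g_k$ and giving a contradiction. This both closes the equivalence and is the workhorse behind the properness claims in the examples below.

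For $(iii)\Rightarrow(v)$ I would note that the kernel of $h$ fixes every vertex, hence lies inside any single $G_v$ and is finite. For discreteness of $h(G)$ in the topology of pointwise convergence I would isolate the identity: the set $\{f:f(v_0)=v_0\}$ is open, because distinct vertices are at distance at least $1$, and it meets $h(G)$ in the image of the finite group $G_{v_0}$; intersecting with finitely many further conditions $\{f:f(v_i)=v_i\}$, one for each nontrivial element of $h(G_{v_0})$ chosen so that it moves $v_i$, cuts this intersection down to $\{\mathrm{id}\}$, so the identity is isolated and $h(G)$ is discrete.

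Finally, both strict implications require genuinely non-locally-finite trees, consistent with the hypothesis. For $(i)\not\Leftarrow(ii)$ I would take $F$ the free group on countably many generators acting on its Cayley graph, the regular tree of infinite degree: the action is free, so $(iii)$ and hence $(ii)$ hold by the argument above, yet the closed unit ball $B$ about the identity vertex $e$ is bounded while infinitely many generators $x_i$ satisfy $x_i\cdot e=x_i\in B$, so $x_iB\cap B\neq\emptyset$ for all $i$ and the action is not metrically proper. For $(v)\not\Rightarrow(iii)$ I would let $\Z$ act on the star with centre $c$ and leaves $\{u_k:k\in\Z\}$ by the shift $u_k\mapsto u_{k+1}$ fixing $c$, a mild variant of the star graph construction: this action is faithful, so $h$ has trivial kernel, and it is discrete because $s^m\to\mathrm{id}$ pointwise forces $u_m=u_0$ and hence $m=0$; nevertheless $G_c=\Z$ is infinite, so $(iii)$ fails. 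These two constructions exhibit the implications as strict.
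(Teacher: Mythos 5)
Your proposal is correct, and all the easy links ((i)$\Rightarrow$(ii), (ii)$\Rightarrow$(iii), (iv)$\Leftrightarrow$(iii), (iii)$\Rightarrow$(v)) and both counterexamples (the infinite-rank free group on its Cayley graph, and an infinite group acting on a star graph) essentially coincide with the paper's. Where you genuinely diverge is the crux (iii)$\Rightarrow$(ii), and to a lesser extent (iii)$\Rightarrow$(iv). The paper first proves a reduction lemma (any of (i)--(iv) passes from a finite index subgroup up to $G$), uses it to assume $G$ preserves factors, and then works coordinate-by-coordinate: it covers each projection $C_i\subseteq T_i$ by $1/2$-balls about vertices and small balls about edge midpoints, extracts a finite vertex set $F^{(i)}$ from compactness, shows any $\gamma$ with $\gamma(C_i)\cap C_i\neq\emptyset$ matches two vertices of $F^{(i)}$, and pigeonholes factor by factor to produce an infinite vertex stabiliser. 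You instead argue by sequential compactness and carrier cubes directly in $P$: from $g_k(c'_k)=c_k$ you extract $g_k(c')\to c$, force $\tau_0\subseteq g_k(\sigma')$ via the open star (one small correction: the carrier of $g_k(c')$ \emph{equals} $g_k(\sigma')$, which is what you use), and pigeonhole over the finite set $V(\sigma')\times V(\tau_0)$ to land $g_\ell^{-1}g_k$ in a finite vertex stabiliser. Your route buys two things: it avoids the reduction to factor-preserving subgroups entirely (carriers behave well even when $g$ permutes the trees), and the same star/carrier mechanism does double duty for (iii)$\Rightarrow$(iv), where the paper again uses product neighbourhoods after the reduction. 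The paper's route is more hands-on and makes the finite vertex data explicit, which it reuses in spirit elsewhere; yours is shorter and more uniform. Do make sure the ball in your (iii)$\Rightarrow$(iv) step has radius small enough that the \emph{doubled} radius still lies in the open star of $\sigma$, since the containment $g(x)\in\mathrm{star}(\sigma)$ comes from $d(x,g(x))\leq d(x,g(y))+d(g(y),g(x))<2\epsilon$; this is a one-line fix, not a gap.
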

\begin{proof}
In the general case,
it is clear that (i) implies (ii) as compact sets are closed and bounded,
and (ii) implies (iii) by considering the closed and bounded set 
$\{v\}$ for $v$ a vertex of $P$.

We now note that if $G$ acts by isometries on a metric 
space $X$ and $H\leq_f G$ then $H$ having any of properties (i) to (iv)
implies that $G$ will have the equivalent property too. 
This means we will be able to assume thereafter that
$G$ acts preserving factors. This is clear for (iii) and as for (i) and (ii),
suppose there were a compact/closed and bounded
subset $C$ of $X$ with $S=\{g\in G: g(C)\cap C\neq\emptyset\}$ infinite. Then
on taking a right coset decomposition 
$H\cup Hg_1\cup \ldots Hg_k$ we have that $D=C\cup g_1(C)\ldots \cup g_k(C)$
is compact/closed and bounded, with some $j$ for which $S\cap Hg_j$ is 
infinite, thus $\{h\in H:D\cap h(D)\neq\emptyset\}$ is infinite too.  

If $H$ has property (iv), suppose there is $x\in X$ such that for all 
open neighbourhoods $U$ of $x$, the set 
$S_U=\{g\in G: g(U)\cap U\neq\emptyset\}$ is infinite. If $S_U\cap G_x$
is infinite then so is $H_x$, thus we may assume (by shrinking $S_U$)
for a given $U$ that $S_U$ is infinite but that it contains no
element fixing $x$. We use this to obtain inductively
a sequence in $G$ of group elements 
$(g_i)$ where $g_i(x)\neq x$ are distinct points 
tending to $x$ as follows: for the open ball $B=B(x,d(x,g_i(x))/3)$, we will
have some $y\in B$ and an element $g_{i+1}$ not fixing $x$ such that 
$g_{i+1}(y)\in B$ too. Thus 
\[0<d(x,g_{i+1}(x))\leq d(x,g_{i+1}(y))+d(g_{i+1}(y),g_{i+1}(x))
\leq 2 d(x,g_i(x))/3.\]
Without loss of generality these elements $g_i$ all lie in the same
left coset of $H$, so there is $g\in G$ such that $g_i=gh_i$ for $h_i\in H$.
Then 
\[d(g_i^{-1}g_{i+1}x,x)\leq d(g_i^{-1}g_{i+1}x,g_i^{-1}x)+d(g_i^{-1}x,x)
=d(g_{i+1}x,x)+d(g_ix,x)\] 
so that $g_i^{-1}g_{i+1}(x)$ tends to $x$, but $g_i^{-1}g_{i+1}\in H$.
Moreover the sequence $g_i^{-1}g_{i+1}$ contains infinitely many
distinct elements, as otherwise $g_i^{-1}g_{i+1}(x)=x$ for all $i$ at least
some value $I$, which means that the distinct elements 
$g_I^{-1}g_{I+1},g_I^{-1}g_{I+2},\ldots $ all stabilise $x$.

We now prove that (iii) implies (ii). 
Suppose that the group $G$ acts on $P=T_1\times \ldots \times T_n$
preserving factors and we have a (non empty)
compact subset $C$ of $P$. Then the
projection $C_i$ of $C$ is a (non empty)
compact subset of $T_i$, thus we can enlarge
$C$ so that it is of the form $C_1\times\ldots\times C_n$. We will work
in each tree separately: for each $i$ consider the collection of open
sets consisting of all metric balls
$B(v^{(i)},1/2)$ over all vertices $v^{(i)}$ of $T_i$. This is almost an open
cover of $T_i$ but misses out the midpoint $m^{(i)}$ of each edge $e^{(i)}$,
thus on taking some small constant $r>0$ and adding the balls 
$B(m^{(i)},r)$ for every edge $e^{(i)}$ to our collection, we now have
an open cover of $T_i$. Thus compactness of $C_i$ implies that
there will be a finite set of vertices
$F^{(i)}=\{v^{(i)}_1,\ldots ,v^{(i)}_{k(i)}\}$ and of edges 
$E^{(i)}=\{e^{(i)}_1,\ldots ,e^{(i)}_{l(i)}\}$ in the tree $T_i$ such that
our subset $C_i$ is covered by
\[B(v^{(i)}_1,1/2)\cup\ldots\cup B(v^{(i)}_{k(i)},1/2)
\cup B(m^{(i)}_1,r)\cup\ldots\cup B(m^{(i)}_{l(i)},r).\]
We now enlarge this cover (but still keep it finite) by adding
to the set $F^{(i)}$ all endpoints of edges in $E^{(i)}$.  

Let us now take an element $\gamma$ in $Aut(T_i)$ where
$\gamma(C_i)\cap (C_i)\neq\emptyset$, so that there is a point
$c\in C_i$  with $\gamma(c)$ in $C_i$ too. If $c$ is in some $B(v^{(i)}_j,1/2)$
for $v^{(i)}_j\in F^{(i)}$ then the covering of $C_i$ means that
$\gamma(c)$ will either lie in some $B(v^{(i)}_{j'},1/2)$ 
for $v^{(i)}_{j'}$ also in $F^{(i)}$, or $\gamma(c)$ lies in some edge
in $E^{(i)}$. But as $c$ cannot be a midpoint and we enlarged the collection
of open 1/2-balls, we can assume that $\gamma(c)$ lies in 
the former set which implies that
$\gamma(v^{(i)}_j)=v^{(i)}_{j'}$. If however $c$ does not lie in one of these 
1/2-balls then it must be the midpoint of an edge in $E^{(i)}$ and the
same holds for $\gamma(c)$ too. Thus $\gamma$ will send the two endpoints of the
first edge to that of the second edge, and all of these endpoints are in
the finite set $F_i$. Thus to summarise, if $\gamma\in Aut(T_i)$ is such that
$C_i\cap \gamma(C_i)\neq\emptyset$     
then there will exist vertices
$v^{(i)}_g,w^{(i)}_g\in F^{(i)}$ with $\gamma(v^{(i)}_g)=w^{(i)}_g$.

We now go back to our group $G$: as we
are assuming that $G$ acts preserving factors, we can write any $g\in G$
as $g=(g_1,\ldots ,g_n)$ for each component $g_i$ an automorphism of $T_i$.
Let us consider the set $S=\{g\in G:g(C)\cap C\neq\emptyset\}$. For any
$g\in S$ we will have $g_i(C_i)\cap C_i\neq\emptyset$ for each $i$.
Thus if $S$ is infinite
then there exist a pair of vertices $v^{(1)},w^{(1)}$ in the finite set $F^{(1)}$
created above with the following property: infinitely
many elements $g\in S$ have their first component 
$g_1$ satisfying $g_1(v^{(1)})=w^{(1)}$. Throwing away all other elements
from $S$, we similarly
have vertices $v^{(2)}$ and $w^{(2)}$ in the set $F_2$ such that the
second component $g_2$ of each $g$ in the modified set $S$ sends $v^{(2)}$ to
$w^{(2)}$. Continuing in this way, we end up with infinitely many elements
$g=(g_1\ldots ,g_n)\in G$ with $g(v^{(1)},\ldots ,v^{(n)})=
(w^{(1)},\ldots ,w^{(n)})$, thus the vertex $(v^{(1)},\ldots ,v^{(n)})$ of $P$
is stabilised by infinitely many elements in $G$.    

To see that (iii) does not imply (i) in general, an easy example is to take
the action of an infinite rank free group on its Cayley graph under a free
generating set, which is a free action on a (non locally finite) tree but which 
is not metrically proper because any element of this infinite
generating set moves the identity vertex by 1.  

As for (iv), it is clear that this implies (iii). For the converse, we again
suppose that $G$ acts preserving factors and for any point 
${\bf p}=(p_1,\ldots ,p_n)\in P$ we take the open neighbourhood
$U=U_1\times \ldots \times U_n$ where $U_i$ is either the 1/2-ball
$B(p_i,1/2)$ if $p_i$ is a vertex of $T_i$, or the open edge $e_i$ in
$T_i$ that contains $p_i$. Now suppose that we have 
${\bf x}=(x_1,\ldots ,x_n)\in U$ and $g\in G$ where 
$g({\bf x})=(g_1(x_1),\ldots ,g_n(x_n))$is also in $U$. If $x_i$ is a vertex 
then we have $x_i=p_i=g_i(x_i)$ and if not then $g_i$ must send the edge
$e_i$ to itself. Then on taking for each $i$ either the vertex $x_i$ or
the two vertices at either end of $e_i$, we end up with at most $2^n$
vertices of $P$ which are permuted amongst themselves by the action of $g$.
Thus if there are infinitely many such elements then there exists infinitely
many elements in the stabiliser of any one of these vertices.

Now suppose that (v) does not hold. We first assume that $G$ embeds in
$Aut(P)$ via $h$, but not discretely. Now on taking any vertex
${\bf v}=(v_1,\ldots ,v_n)$ of $P$ and the open set
\[U_{\bf v}=B(v_1,1/2)\times \ldots \times B(v_n,1/2),\]
we have that the identity map lies in the sub-basic open set
$O(\{ {\bf v}\},U_{\bf v})$ but if $g\in Aut(P)$ also lies in
$O(\{ {\bf v}\},U_{\bf v})$ then, as it sends vertices to vertices, we
must have $g({\bf v})={\bf v}$. Thus if 
$O(\{ {\bf v}\},U_{\bf v})$ contains infinitely many elements of $G$ then
$G_{\bf v}$ is infinite but if it contains only finitely many elements
$id,g(1),\ldots ,g(k)$ then the open set
\[O(\{ {\bf v}(1)\},U_{\bf v}(1))\cap\ldots\cap O(\{ {\bf v(k)}\},U_{\bf v}(k))\]
of $Aut(P)$ contains only the identity from $G$, where ${\bf v}(i)$ is any
vertex in $P$ not fixed by $g(i)$, thus $G$ is discrete after all.
If $h$ is not an embedding of $G$ then certainly it is required that
$Ker(h)$ is finite for stabilisers in $G$ of points to be finite. Moreover 
from above $h(G)$ being indiscrete
implies that $h(G)_{\bf v}$ is infinite so certainly $G_{\bf v}$ is infinite.

However  we see from this that (v) does not imply (iii), for instance
by taking any infinite group $G$ and letting it act on its star graph $S$, 
which is one vertex for each element of $G$ and an extra root vertex $v_0$. 
We then join every vertex $v\neq v_0$ to $v_0$ by an edge (so $S$ is indeed
a tree) and let $G$ act on the
vertices $v\neq v_0$ by left multiplication, but $v_0$ is a global fixed
point. Thus the action does not satisfy any of (i) to (iv), but as all 
vertices except the root have trivial stabiliser it follows that $G$ is 
discrete in $Aut(S)$.
\end{proof}
  
\begin{co}
If in Proposition \ref{eqpr} 
each tree is locally finite then all five conditions are equivalent.
\end{co}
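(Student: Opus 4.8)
The plan is to lean on Proposition \ref{eqpr}, which already establishes, with no assumption on the trees, the chain of implications (i) $\Rightarrow$ (ii) $\Leftrightarrow$ (iii) $\Leftrightarrow$ (iv) $\Rightarrow$ (v). Thus to obtain the equivalence of all five conditions in the locally finite case it suffices to close the two remaining gaps, namely (ii) $\Rightarrow$ (i) at the top and (v) $\Rightarrow$ (iii) at the bottom. The two counterexamples supplied in the proof of that proposition (the infinite-rank free group on its Cayley graph, and the infinite star graph) both rely on trees that are not locally finite, so we should expect both implications to become valid once local finiteness is imposed.

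The implication (ii) $\Rightarrow$ (i) is immediate. When each $T_i$ is locally finite it is a proper metric space, since each closed ball is a finite subcomplex and hence compact, and a finite $\ell_2$ product of proper metric spaces is again proper: a bounded subset of $P$ has bounded projection to each factor, so its closure is contained in a product of closed balls, which is compact. Therefore in $P$ the closed and bounded sets are exactly the compact sets, so ``metrically proper'' and ``proper'' say the same thing.

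The substantive step is (v) $\Rightarrow$ (iii), which I would argue topologically. Since $P$ is a proper metric space, its isometry group $Isom(P)$ with the compact open (equivalently pointwise convergence) topology is locally compact, and by an Arzel\`a--Ascoli argument the stabiliser $Isom(P)_{\bf v}$ of any point is compact. (Alternatively, working factorwise, the stabiliser of a vertex in $Aut(T_i)$ for a locally finite $T_i$ is the inverse limit of its finite actions on the balls $B(v_i,n)$, hence profinite and so compact.) As $Aut(P)$ is a closed subgroup of $Isom(P)$ --- a pointwise limit of automorphisms sends vertices to vertices and so is again a cube complex automorphism --- the stabiliser $Aut(P)_{\bf v}$ is a compact subgroup. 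Now assume (v), so that $h(G)$ is discrete in $Aut(P)$; a discrete subgroup of a Hausdorff topological group is closed, so its intersection with the compact set $Aut(P)_{\bf v}$ is both compact and discrete, hence finite. Since $h(G)_{\bf v}=h(G)\cap Aut(P)_{\bf v}$ and $h(G_{\bf v})=h(G)_{\bf v}$ (because $h$ is the action homomorphism, so $h(g)$ fixes ${\bf v}$ exactly when $g$ does), the image $h(G_{\bf v})$ is finite. As $Ker(h)$ is finite by (v), the stabiliser $G_{\bf v}$ is an extension of the finite group $h(G_{\bf v})$ by the finite group $G_{\bf v}\cap Ker(h)$, hence finite, which is (iii).

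The main obstacle is this last step, and its heart is the compactness of the vertex stabiliser $Aut(P)_{\bf v}$; this is precisely what local finiteness buys us, and it is exactly what fails for the infinite star graph that defeats (v) $\Rightarrow$ (iii) in the general setting. Once that compactness is in hand, the finiteness of $G_{\bf v}$ follows routinely from discreteness together with the finiteness of the kernel.
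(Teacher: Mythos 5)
Your proof is correct and follows the same overall strategy as the paper: both close the gap between (i) and (ii) by observing that local finiteness makes each factor, and hence $P$, a proper metric space, and both reduce the remaining implication to showing that discreteness of $h(G)$ forces finite vertex stabilisers, disposing of a finite kernel in the same way. The difference lies in how that last step is established. The paper simply cites Theorem 5.3.5 of \cite{rat}, which states that for the isometry group of a proper metric space (with the compact open topology) discreteness of a subgroup is equivalent to discontinuity, i.e.\ properness, of its action. You instead give a self-contained argument exploiting the simplicial structure: the stabiliser of a vertex in the automorphism group of a locally finite tree is an inverse limit of its actions on finite balls, hence profinite, so $Aut(P)_{\bf v}$ is compact; a discrete subgroup of a Hausdorff topological group is closed, so $h(G)\cap Aut(P)_{\bf v}$ is compact and discrete, hence finite. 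This is more elementary and makes visible exactly where local finiteness enters, at the cost of a few extra lines. The only point worth tightening is your claim that $Aut(P)$ is closed in $Isom(P)$ (an isometry sending vertices to vertices is not quite by definition a cube complex automorphism); you do not actually need it, since compactness of $Aut(P)_{\bf v}$ and closedness of the discrete subgroup $h(G)$ can both be read off inside the topological group $Aut(P)$ itself.
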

\begin{proof}
If $T_1,\ldots ,T_n$ are all locally
finite then they are proper metric spaces, so $P$ will be too, hence (i)
and (ii) are the same in this case because all closed bounded subsets of 
$P$ will be compact. 

Now suppose that (v) holds, but again let us first
suppose that $G$ embeds in $Aut(P)$ via $h$. Then \cite{rat} Theorem 5.3.5
shows that properness of the action (i.e. definition (ii), although called
a discontinuous action there) is equivalent to discreteness
where this is shown for the group of isometries of any proper metric space,
under the compact open topology. If $G$ does not embed in $Aut(P)$ under $h$
then $G/Ker(h)$ embeds in $Aut(P)$ as $h(G)$, 
with discreteness of $h(G)$ implying that
$h(G)_{\bf v}$ is finite for any vertex ${\bf v}\in P$ from the above.
As $h(G)$ is a quotient of $G$ with finite kernel, each
element of $G/Ker(h)$ is a coset in $G$ consisting of finitely many $g\in G$
so $G_{\bf v}$ is finite too.
\end{proof}

Hence we will adopt the following notion:
\begin{defn} We say a group $G$ acts {\bf properly} on a finite product $P$
of trees (locally finite or otherwise) if the stabiliser of every
vertex in $P$ is a finite group.
\end{defn}
This is surely the right definition in the locally finite case, where all
the definitions above are equivalent but this one is the easiest to check
in practice. We also believe that this is the right definition for general 
trees: certainly
it seems that (v) is simply too weak to be of use. Moreover definition (i) 
comes over as rather strong, although in some circumstances this might be the
appropriate concept to use, such as when looking at the Haagerup property.
 
If our group $G$ acts preserving factors so that any $g\in G$ can be written
$(g_1,\ldots ,g_n)$ for some $g_i\in Aut(T_i)$ then 
$g$ is in the stabiliser $G_{\bf v}$ for a vertex ${\bf v}=(v_1,\ldots ,v_n)$
of $P$ if and only if $g_1$ stabilises 
$v_1\in T_1,\ldots ,$ and $g_n$ stabilises 
$v_n\in T_n$. In other words, given a vertex 
$v_i\in T_i$, suppose we write $G_{v_i}$ for the stabiliser of $v_i$ in $G$
when we regard $G$ as acting on the tree $T_i$ by projection. Then $G$ acts 
properly if and only if for all vertices $(v_1,\ldots ,v_n)\in P$, we have the 
intersection of stabilisers $G_{v_1}\cap \ldots \cap G_{v_n}$ is finite.
Usually we will be interested in the case where $G$ is a torsion free group,
thus $G$ acts properly on a finite product $P$ of trees if and only if it acts
freely on the vertices of $P$ (and indeed if and only if $G$ acts freely on $P$
because if some element $g\in G$ stabilises a cube then a power of $G$ 
will stabilise its vertices). Using this, we can give a basic criterion
for such groups to act properly on $P$ if they preserve factors,
which nevertheless is very useful
because it involves looking at the components of the
elements of $G$ rather than the vertices
of $P$.
\begin{prop} \label{hyel}
Suppose that $G$ is a torsion free group acting on a finite product 
$P=T_1\times \ldots \times T_n$ of
trees preserving factors. Then $G$ acts properly on $P$
if and only if for any non identity $g=(g_1, \ldots , g_n)\in G$ with
each $g_i$ regarded as an element of $Aut(T_i)$ under the natural
projection, we have that at least
one $g_i$ from $g_1,\ldots ,g_n$ acts as a hyperbolic isometry on the
tree $T_i$.
\end{prop}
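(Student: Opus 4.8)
The plan is to use the equivalence, established in the paragraph preceding the statement, that for a torsion free group $G$ acting preserving factors, acting properly on $P$ is the same as acting freely on the vertices of $P$, together with the characterisation there that $g=(g_1,\ldots,g_n)$ stabilises a vertex $\mathbf{v}=(v_1,\ldots,v_n)$ if and only if each component $g_i$ stabilises $v_i$. The other ingredient is the standard dichotomy (Serre) that any $g_i\in Aut(T_i)$ is either elliptic, meaning it fixes a point of $T_i$, or hyperbolic, meaning it has an axis along which it translates and so fixes no point. The argument then splits into the two implications.

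First I would prove the reverse implication. Suppose every non identity $g\in G$ has at least one hyperbolic component. If some non identity $g$ fixed a vertex $\mathbf{v}$ of $P$ then by the characterisation above each $g_i$ would fix $v_i$, so every component $g_i$ would be elliptic, contradicting the hypothesis. Hence no non identity element fixes a vertex, every vertex stabiliser is trivial, and so the action is proper.

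For the forward implication, assume $G$ acts properly, hence freely on the vertices of $P$, and suppose for contradiction that some non identity $g=(g_1,\ldots,g_n)$ has every component $g_i$ elliptic. The point requiring care, and the main obstacle, is that an elliptic isometry of a tree need not fix a vertex: it may instead invert a single edge and fix only its midpoint. To get round this I would pass to the power $g^2$. An elliptic $g_i$ either already fixes a vertex, in which case so does $g_i^2$, or it inverts an edge, in which case $g_i^2$ fixes the two endpoints of that edge. Either way $g_i^2$ fixes some vertex $w_i$ of $T_i$, so $g^2$ fixes the vertex $(w_1,\ldots,w_n)$ of $P$. Since $G$ is torsion free and $g\neq 1$ we have $g^2\neq 1$, contradicting freeness on the vertices. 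Therefore at least one component of $g$ must be hyperbolic.

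The only subtlety is thus the edge inversion issue in the forward direction, which the squaring step resolves while keeping each $T_i$ fixed; alternatively one could replace each $T_i$ by its barycentric subdivision at the outset, as the excerpt permits, so that elliptic elements fix vertices outright and the power argument becomes unnecessary. Everything else reduces to the two recalled facts and the elliptic/hyperbolic dichotomy, so the verification is routine.
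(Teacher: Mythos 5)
Your proof is correct and follows essentially the same route as the paper: both directions reduce to the elliptic/hyperbolic dichotomy and the observation that $g$ fixes a vertex of $P$ exactly when every component fixes a vertex of its factor. Your extra care over edge inversions (squaring $g$ and invoking torsion freeness) addresses a point the paper handles implicitly via its earlier convention of passing to barycentric subdivisions, but the substance of the argument is identical.
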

\begin{proof} If we have a non identity element $g=(g_1,\ldots ,g_n)\in G$   
where each $g_i$ acts as an elliptic element on $T_i$ then there will
be a vertex $v_i\in T_i$ fixed by $g_i$ so that the point 
$(v_1,\ldots ,v_n)\in P$ is fixed by $g$ and all its powers, thus the
action is not proper. Conversely if $g_i$ acts as a hyperbolic element
on $T_i$ then no vertex $v_i\in T_i$ is fixed by $g_i$, so no vertex
${\bf v}=(v_1,\ldots ,v_k)$ in $P$ is fixed by $g$.
\end{proof}  

If $G$ acts on a finite product of trees without preserving factors then
Proposition \ref{hyel} is still useful, because as mentioned earlier
for $H\leq_f G$ we have that $G$ acts properly if and only if $H$ acts 
properly, so we just take 
any finite index subgroup $H$ preserving factors and apply this
Proposition to $H$.

\section{Groups acting properly on a product of trees}
The material in the last section now begs two questions: which groups act 
properly on a finite
product of trees and which groups act properly on a finite
product of locally finite trees? Of course both of these properties are
preserved by taking subgroups (as is the property of acting properly 
preserving factors in both cases). However if $H$ has finite index $i\geq 2$ in 
$G$ and $H$ has either of these properties then $G$ has the respective property 
too by induced actions, namely if $H$ is acting on $P$ then we define an 
action of $G$ on the product $P^i$ which is also isometric
and proper, though $G$ will never act preserving factors, even
if $H$ does. Overall this implies (if we allow proper actions which do
not preserve factors) that both of our two properties are
commensurability invariants.  

Starting with the case of one tree, we have mentioned that $G$ has a free 
action on some tree if and
only if $G$ has a free action on some locally finite tree if and only
if $G$ is a free group. Moreover if $G$ is finitely generated then
we can replace free action with proper
action and free group with virtually free group in the above statement.
If however $G$ is not finitely generated then 
given a proper action of $G$ on a tree, we can only say that $G$ is
locally virtually free (though not all of these groups have such an
action, for instance take $\Q$ which is torsion free).

Moving to the product of more than one tree, the next group we might
examine is the fundamental group $S_g$
of the closed orientable surface $\Sigma_g$  for genus $g\geq 2$. This is
known to act properly by considering Bass - Serre trees obtained from
splittings of a system of curves that fill the surface,
though here is a basic argument utilising
Proposition \ref{hyel}.\\
\hfill\\
{\bf Example}: 
Taking $g=2$ (as $S_g$ is a subgroup of $S_2$ for $g\geq 2$), we have that
$S_2$ acts on its Bass - Serre tree $T_1$ via
the amalgamation $F_2*_{\Z} F_2$ for $\langle a,b\rangle$ the first $F_2$
factor and $\langle c,d\rangle$ the second, where we have $[a,b][c,d]=id$.
Although $T_1$ is of course
not locally finite, the elliptic elements of this action are all
conjugate into the vertex groups $\langle a,b\rangle$ or $\langle c,d\rangle$.
Now consider the homomorphism from $S_2$ onto another rank 2 free group
$\langle x,y\rangle$ given by $a,d\mapsto x$ and $b,c\mapsto y$. As
$\langle x,y\rangle$ acts freely and purely hyperbolically on a locally
finite tree $T_2$ we have that our surface group $S_2$ acts on $T_2$ where
any non identity element in $\langle a,b\rangle$ or $\langle c,d\rangle$
is hyperbolic. This is also true for any non identity element conjugate in
$S_2$ into $\langle a,b\rangle$ or $\langle c,d\rangle$ so by Proposition
\ref{hyel} we have that $S_2$ (and $S_g$ for $g\geq 2$) acts properly 
preserving factors on a
product of two trees $T_1\times T_2$,
where the second is locally finite but the first is not.

The question of whether such an action exists when all factor trees are
locally finite was raised in \cite{flss} and seems an important and
difficult question. We will look at this in the final section, but for
this section our aim is to reaffirm that many groups act properly
on a finite product of trees,  whereas we will give evidence in the
next section which suggests that acting properly
on a finite product of locally finite trees is much rarer.

Let us first think about what obstructions we know of that prevent a group
$G$ from acting properly on a finite product $P$ of trees. As $P$ will
be a CAT(0) space and the action of $G$ will be proper and semisimple,
we already have group theoretic restrictions on $G$ 
(see \cite{bh} Part III, Chapter $\Gamma$, Theorem 1.1), thus
ruling out such examples as mapping class groups of genus $g\geq 3$.
Further $P$ is not just a CAT(0) space but a CAT(0) cube complex 
(indeed finite dimensional though it is only locally finite when the
trees are locally finite), which can provide further restrictions.
An infinite group $G$ can have Serre's fixed point property (FA) and still
have a proper action on a finite product of trees but clearly there cannot
be such an action preserving products, thus this rules out infinite groups where
every finite index subgroup has property (FA). This is known as hereditary
Property (FA), for instance see \cite{deck}.
In particular $G$
cannot have property (T), nor contain an infinite subgroup with property (T).

Having mentioned some negative results, which groups might we hope do act
properly on a finite product of trees (where for the rest of this
section we do not assume
local finiteness)? Surely our main case of interest should be RAAGs, as if our 
property holds here then it will immediately hold for all groups which 
virtually embed in a RAAG, thus covering all virtually special groups.
This is true and follows from \cite{drnjan} where the same property
is shown for Coxeter groups, by utilising their action on the Davis complex.
Here we can give a quick and basic proof directly for RAAGs, by again
using Proposition \ref{hyel}.
\begin{thm} \label{rg}
If $\Gamma$ is a finite graph with $n$ vertices then the
RAAG group $G(\Gamma)$ defined by this graph acts properly and preserving
factors on a product of $n$ trees.
\end{thm}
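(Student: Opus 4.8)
The plan is to produce, for each of the $n$ vertices, a single tree on which $G(\Gamma)$ acts with the corresponding vertex generator hyperbolic, then take the product action (which preserves factors by construction) and invoke Proposition \ref{hyel}. Since RAAGs are torsion free, Proposition \ref{hyel} reduces the whole theorem to one combinatorial statement: every non-identity element of $G(\Gamma)$ acts hyperbolically on at least one of the $n$ trees.

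For a vertex $v_i$ with link $\mathrm{lk}(v_i)$, I would use the standard visual decomposition obtained by isolating $v_i$. Separating off the generator $v_i$ in the defining presentation, the only relations involving $v_i$ are $[v_i,w]=1$ for $w\in\mathrm{lk}(v_i)$, so $G(\Gamma)$ is exactly the HNN extension of the base group $A_i=G(\Gamma\setminus v_i)$ with associated subgroup $C_i=G(\mathrm{lk}(v_i))\leq A_i$ (both inclusions the natural one) and stable letter $v_i$ acting as the identity on $C_i$. Let $T_i$ be the resulting Bass--Serre tree. Two features of HNN trees are all I need: the stable letter $v_i$ is hyperbolic on $T_i$, and an element is elliptic on $T_i$ if and only if it is conjugate into the base group $A_i=G(\Gamma\setminus v_i)$. (When $v_i$ is adjacent to every other vertex this degenerates to $A_i\times\langle v_i\rangle$ and $T_i$ is just a line, which is harmless.) Letting $G(\Gamma)$ act diagonally on $P=T_1\times\cdots\times T_n$ gives an action preserving factors.

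It then remains to verify the hypothesis of Proposition \ref{hyel}: given $g\neq 1$, I want some $i$ for which $g$ is not conjugate into $G(\Gamma\setminus v_i)$. Here I would pass to a cyclically reduced conjugate $g'$ of $g$ and let $S\subseteq V(\Gamma)$ be its support, which is non-empty since $g\neq 1$. Choosing any $v_i\in S$, the claim is that $g$ is not conjugate into $G(\Gamma\setminus v_i)$, so that $g$ (equivalently $g'$) is hyperbolic on $T_i$; Proposition \ref{hyel} then yields that the action on $P$ is proper and factor-preserving, as required.

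The one substantive point, and the step I expect to be the main obstacle, is this last claim: that the support of a cyclically reduced representative is a genuine conjugacy invariant and is the minimal set $W$ for which $g$ is conjugate into the parabolic subgroup $G(W)$. Equivalently, if $g$ is conjugate into $G(\Gamma\setminus v_i)$ then $v_i$ cannot occur in the cyclically reduced form. This is precisely the content of the normal-form and conjugacy theory for RAAGs (a cyclically reduced element is unique up to shuffles and cyclic permutation, neither of which alters the support), and I would either cite this standard fact or, in the spirit of a quick and basic proof, verify directly via Britton's Lemma that a cyclically reduced word containing $v_i^{\pm 1}$ has positive translation length on $T_i$.
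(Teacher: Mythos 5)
Your argument is correct, but it is organised quite differently from the paper's. The paper proceeds by induction on the number of vertices: at each stage it adds one vertex $v$, splits $G(\Gamma_{n+1})$ as an HNN extension over $G(\Gamma_n)$ with associated subgroup $G(\mathrm{lk}(v))$ and trivial stable-letter action, and --- this is the key device --- uses the fact that $G(\Gamma_n)$ is a \emph{retract} of $G(\Gamma_{n+1})$ to let the big group act on the $n$ trees already constructed for the subgroup. The $i$-th tree in the paper is thus the Bass--Serre tree of a splitting of the \emph{smaller} group $G(\Gamma_i)$, acted on through the composite retraction; the only Bass--Serre input is that elliptic elements of an HNN tree are conjugate into the base, and no normal-form theory for RAAGs is needed. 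You instead take all $n$ visual splittings of the \emph{full} group at once (isolating each vertex in turn) and must then certify that every nontrivial $g$ fails to be conjugate into some $G(\Gamma\setminus v_i)$; as you correctly identify, this rests on the conjugacy-invariance of the support of a cyclically reduced element, a true and standard fact (Servatius' normal-form/conjugacy theory) but a genuine extra input beyond elementary Bass--Serre theory, which you would need to cite or reprove via Britton's Lemma. What each approach buys: the paper's induction is self-contained and adapts immediately to adding several vertices of the same colour at once, which is exactly how Corollary \ref{chr} (the chromatic-number bound) is obtained; your version is non-inductive and more symmetric, and gives a clean explicit criterion for which factor tree witnesses hyperbolicity of a given element (any tree indexed by a vertex in its cyclically reduced support). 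Your handling of the degenerate case where $v_i$ is joined to every other vertex (the tree is a line) is also fine.
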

\begin{proof}
The proof is by induction on $n$. Given a finite graph $\Gamma$ built
by adding one vertex at a time and all the relevant edges, suppose that
we go from $\Gamma_n$ with $n$ vertices to $\Gamma_{n+1}$ with $n+1$ vertices
by adding the vertex $v$.
Then it is well known that in terms of groups, we have $G(\Gamma_{n+1})$
is an HNN extension $\langle G(\Gamma_n),t\rangle$ with stable letter $t$,
so that $tAt^{-1}=B$ for some associated subgroups $A,B$ of $G(\Gamma_n)$. 
However the
crucial point here is that $A=B$ (the subgroup of $G(\Gamma_n)$ obtained
from the induced graph in $\Gamma_n$ of the neighbours of $v$) and that
conjugation by $t$ is acting as the identity.

Thus from this HNN extension, we have that $G(\Gamma_{n+1})$ acts on the
Bass - Serre tree $T$ and any elliptic element in this action is
conjugate into the subgroup $G(\Gamma_n)$. Now by induction
this subgroup $G(\Gamma_n)$ of $G(\Gamma_{n+1})$ acts properly on the product
of $n$ trees $T_1\times \ldots \times T_n$, preserving factors.
However the above
description of the HNN extension means that $G(\Gamma_n)$ is a retract of
$G(\Gamma_{n+1})$, so $G(\Gamma_{n+1})$ quotienting onto $G(\Gamma_n)$ means that
$G(\Gamma_{n+1})$ also acts on $T_1\times \ldots \times T_n$ preserving
factors. Moreover, as this action restricted to $G(\Gamma_n)$ is proper,
any non identity element of $G(\Gamma_n)$ acts hyperbolically on at least
one of the trees $T_1,\ldots ,T_n$. Now $G(\Gamma_{n+1})$ also acts on
the product $T_1\times \ldots \times T_n\times T$ preserving factors.
Then for any non identity element $g\in G(\Gamma_{n+1})$, either $g$ is not
conjugate into $G(\Gamma_n)$ and so acts hyperbolically on $T$, or a conjugate
of $g$ acts hyperbolically on one of $T_1,\ldots ,T_n$ thus
so does $g$. Hence we have a proper action, preserving factors,
by Proposition \ref{hyel}.
\end{proof}

It is clear from the proof of Theorem \ref{rg} that requiring $n$ trees
for a graph of $n$ vertices is wasteful in certain cases, so let  
us now show that a similar argument gives us a much better bound in general.
This will be useful in giving an exact description of those RAAGs which act
properly on a product of two trees.

\begin{co} \label{chr}
Let $\Gamma$ be a finite graph with chromatic number $k$ then the associated
RAAG $G(\Gamma)$ acts properly preserving factors on the product of $k$ trees.
\end{co}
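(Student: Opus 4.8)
The plan is to produce, for each colour class of an optimal colouring, a single tree on which the whole RAAG acts, and then to invoke Proposition \ref{hyel}. Fix a proper colouring of $\Gamma$ with colour classes $V_1,\dots,V_k$, so that each $V_i$ is an independent set and $V=V_1\cup\dots\cup V_k$. Because $V_i$ is independent, no two of its vertices are joined by an edge, so in $G(\Gamma)$ the generators lying in $V_i$ commute only with vertices outside $V_i$ and never with one another. This is exactly the feature exploited in Theorem \ref{rg}: writing $S_i=V\setminus V_i$ and $H_i=G(\Gamma[S_i])=G(\Gamma-V_i)$, I would rebuild $G(\Gamma)$ from $H_i$ by re-introducing the vertices of $V_i$ one at a time, each as the stable letter of an HNN extension whose two associated subgroups both equal $\langle \mathrm{lk}(v)\rangle\le H_i$ with conjugation acting as the identity (this is legitimate precisely because distinct members of $V_i$ do not commute). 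This exhibits $G(\Gamma)$ as the fundamental group of a graph of groups with the single vertex group $H_i$ and one loop for each member of $V_i$.

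Let $T_i$ be the associated Bass--Serre tree, noting that it need not be locally finite, which is harmless here since this section imposes no local finiteness. Then $G(\Gamma)$ acts on $T_i$ and, exactly as in the proof of Theorem \ref{rg}, the elliptic elements of this action are precisely those conjugate into the vertex group $H_i=G(\Gamma-V_i)$. Since $G(\Gamma)$ acts on every $T_i$ it acts on the product $T_1\times\dots\times T_k$ preserving factors. As RAAGs are torsion free, Proposition \ref{hyel} tells me that this action is proper provided every non-identity $g\in G(\Gamma)$ acts hyperbolically on at least one $T_i$, equivalently provided no non-identity element is simultaneously conjugate into all of the subgroups $G(\Gamma-V_i)$.

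The one genuine ingredient, and the step I expect to be the main obstacle, is the following standard consequence of the normal form theory for RAAGs: the set of generators occurring in a cyclically reduced representative of an element is a conjugacy invariant, and an element can be conjugate into a special subgroup $G(\Gamma[S])$ only if that set of generators is contained in $S$. (One quick route: if $g=h g' h^{-1}$ with $g'\in G(\Gamma[S])$, then applying the canonical retraction $\rho_S\colon G(\Gamma)\to G(\Gamma[S])$, which fixes $g'$, shows that $g$ is conjugate to $\rho_S(g)$, whose support lies in $S$; comparing cyclically reduced forms then gives the claim.) Granting this, I would finish as follows. Take $g\neq 1$ and let $g_0$ be a cyclically reduced conjugate of $g$; then $g_0\neq 1$, so some generator $x$ occurs in $g_0$. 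Setting $i=c(x)$ to be its colour we have $x\in V_i$, so the generators occurring in $g_0$ are not all contained in $S_i=V\setminus V_i$. By the criterion $g_0$, and hence $g$, is not conjugate into $G(\Gamma-V_i)$, so $g$ acts hyperbolically on $T_i$. Since this holds for every non-identity $g$, Proposition \ref{hyel} yields a proper action preserving factors on $T_1\times\dots\times T_k$, as required.
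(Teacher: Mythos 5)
Your proof is correct, but it takes a genuinely different route from the paper's. The paper argues by induction on the number of colours: it builds $G(\Gamma)$ up through the nested subgroups $G(\Delta_1)\le\dots\le G(\Delta_k)$ (where $\Delta_i$ is spanned by the first $i$ colour classes), at each stage adjoining the Bass--Serre tree of the multiple HNN splitting of $G(\Delta_{i+1})$ over $G(\Delta_i)$ and using the retraction $G(\Delta_{i+1})\twoheadrightarrow G(\Delta_i)$ to make the larger group act on the previously constructed trees; properness then follows because every non-identity element is either hyperbolic on the new tree or conjugate into $G(\Delta_i)$, where the inductive hypothesis applies. You instead work symmetrically: for each colour class $V_i$ you split the \emph{full} group $G(\Gamma)$ over the single vertex group $G(\Gamma-V_i)$ and take that Bass--Serre tree, so no retractions or induction are needed to define the $k$ actions. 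The price is that properness now requires knowing that no non-trivial element is simultaneously conjugate into all of the special subgroups $G(\Gamma-V_i)$, which you correctly reduce to the conjugacy-invariance of the support of a cyclically reduced element --- a standard but non-trivial input from RAAG normal form theory (Servatius, Crisp--Godelle--Wiest) that the paper's inductive argument avoids entirely. Your retraction sketch for that lemma does genuinely need the invariance of cyclically reduced support to close, so you are leaning on that classical fact rather than proving it; that is acceptable, but worth flagging as the one external ingredient. What your approach buys is a cleaner, colour-symmetric description of the $k$ trees; what the paper's buys is self-containedness, using nothing beyond Proposition \ref{hyel} and the elementary structure of the HNN extensions.
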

\begin{proof}
Let the vertices of $\Gamma$ be given the colours $1,\ldots ,k$ and for $i$
between 1 and $k$, let
the graph $\Delta_i$ be the induced subgraph on all vertices which are
coloured any of $1,\ldots ,i$. Then similarly to the above we have that
for $2\leq i\leq k-1$ the group $G(\Delta_{i+1})$ admits a graph of groups
decomposition with one vertex, corresponding to the vertex group $G(\Delta_i)$
and $s$ self loops, where $s$ is the number of vertices being added to 
$\Delta_i$ to obtain $\Delta_{i+1}$. When we add one of these vertices $v$,
all edges from $v$ that also get added have their other endpoint back in
the graph $\Delta_i$, thus $v$ introduces one stable letter corresponding
to one of these self loops, and again the edge groups at either end are
the same subgroup of $G(\Delta_i)$ with the stable letter acting trivially
by conjugation. Thus we still have that $G(\Delta_i)$ is a retract of
$G(\Delta_{i+1})$.

Thus on assuming that $G(\Delta_i)$ acts properly on the product of $i$ trees
(with the base case being that the graph $\Delta_1$ has no edges so 
$G(\Delta_1)$ is free), we introduce the Bass - Serre tree of this
graph of groups splitting of $G(\Delta_{i+1})$ in which all elliptic elements
are conjugate into $G(\Delta_i)$. Moreover $G(\Delta_{i+1})$ can be made to
act on the first $i$ trees using the retract property, so that $G(\Delta_i)$
still acts properly on this product. Thus on adding the Bass - Serre tree
to our $i$ trees already present, we have that $G(\Delta_{i+1})$ acts 
preserving factors on the
product of $i+1$ trees in which every non identity element acts hyperbolically
on one of the component trees.
\end{proof}

We now obtain
useful obstructions for a group $G$ to act properly on a 
product of a certain number of trees
if $G$ contains non cyclic free abelian subgroups $\Z^n$ for $n\geq 2$
(at least if $G$ acts preserving factors, but if not then we will have 
some $H\leq_f G$ which does act this way and which also contains $\Z^n$). 
This works especially
well when $G$ has infinite order elements $x,y,z$ where $x$ commutes
with both $y$ and $z$ but $yz\neq zy$. Examining elements with ``large''
centralisers gives us a pair of lemmas which are presumably folklore
(for instance see \cite{cuvo} Section 1 for similar results)
but which come in very handy for these groups.

\begin{lem} \label{cent}
For an infinite order element $x$ in a group $G$ acting on a tree
$T$ with $x$ a hyperbolic element, the centraliser $C_G(x)$ fixes the
axis $A_x$ of $x$ setwise and preserves its direction. Consequently we
obtain a non trivial homomorphism $\theta:C_G(x)\rightarrow\Z$ where for
$g\in C_G(x)$ we have that $g$ is elliptic if and only if $g\in Ker(\theta)$.
\end{lem}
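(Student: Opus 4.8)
The plan is to exploit the uniqueness of the axis of a hyperbolic tree isometry. First I would recall the standard fact that for any isometry $g$ of $T$ and any hyperbolic element $x$, the image $g\cdot A_x$ is precisely the axis $A_{gxg^{-1}}$ of the conjugate, since $gxg^{-1}$ translates $g\cdot A_x$ by the same positive translation length as $x$ translates $A_x$, and the axis of a hyperbolic element is unique. Applying this to $g\in C_G(x)$, where $gxg^{-1}=x$, gives $g\cdot A_x=A_x$ at once, establishing that $C_G(x)$ fixes the axis setwise.

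Next I would analyse how such a $g$ acts on the line $A_x$. Since $g$ preserves $A_x$, its restriction is an isometry of a line, hence either a translation or a reflection. The key step is to rule out reflections: writing $\ell=\ell(x)>0$ for the translation length and identifying $A_x$ with the real line so that $x|_{A_x}$ is $t\mapsto t+\ell$, suppose $g|_{A_x}$ were the involution $t\mapsto -t+c$. A direct computation then gives $gxg^{-1}|_{A_x}:t\mapsto t-\ell$, so $gxg^{-1}$ would translate $A_x$ in the direction opposite to $x$; but $gxg^{-1}=x$ forces $-\ell=\ell$, impossible. Hence every $g\in C_G(x)$ acts as a translation on $A_x$ and preserves its direction. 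This conjugation computation is the main obstacle, in the sense that it is the one place where the centralising hypothesis does real work.

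With the orientation of $A_x$ fixed by $x$, I would define $\theta(g)$ to be the signed amount by which $g$ translates $A_x$. Because $g$ is simplicial and maps the vertices of $A_x$ (the integer points of the line) to vertices of $A_x$, this amount lies in $\Z$, giving a map $\theta:C_G(x)\to\Z$; it is a homomorphism since it factors as restriction to the invariant line $A_x$ followed by the translation-length homomorphism on $Isom^+(A_x)$. It is non-trivial because $x$ itself lies in $C_G(x)$ with $\theta(x)=\ell>0$.

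Finally I would identify the kernel with the elliptic elements. If $\theta(g)=0$ then $g$ fixes $A_x$ pointwise, so $g$ is elliptic. Conversely, if $\theta(g)=\tau\neq0$ then for any vertex $p\in A_x$ the points $p$, $gp$, $g^2p$ lie in order along $A_x$ with $d(p,g^2p)=2|\tau|=2\,d(p,gp)$, so $gp$ is the midpoint of the geodesic $[p,g^2p]$; by the standard hyperbolic/elliptic dichotomy this makes $g$ hyperbolic. Thus $g$ is elliptic precisely when $g\in Ker(\theta)$, which completes the proof.
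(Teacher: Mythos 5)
Your proof is correct and follows essentially the same route as the paper's: uniqueness of the axis plus the conjugation formula $g(A_x)=A_{gxg^{-1}}$ gives setwise invariance, a direction reversal would force $gxg^{-1}=x^{-1}$, and $\theta$ is the signed translation length on $A_x$, whose vanishing characterises the elliptic elements. Your version just spells out the details (integrality of $\theta$, the midpoint criterion for hyperbolicity) that the paper leaves implicit.
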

\begin{proof}
For $g\in G$ the axis $A_{gxg^{-1}}$ of $gxg^{-1}$ is of course $g(A_x)$, so
$g(A_x)=A_x$ if $gxg^{-1}=x$, whereas if $G$ reversed the direction of $A_x$
then we would have $gxg^{-1}=x^{-1}$. The homomorphism $\theta$ is just the
translation length of $g\in C_G(x)$ on $A_x$, which determines whether
$g$ acts hyperbolically or elliptically on $A_x$. However this is the
same as the type of action of $g$ on the whole tree $T$.
\end{proof}
Thus for free abelian groups of finite rank, and especially $\Z\times\Z$,
we have:
\begin{lem} \label{z}
Suppose that $G=\Z^n$ for $n\geq 2$ acts on a tree without a global
fixed point. Then there exists a non trivial homomorphism 
$\theta:G \rightarrow \Z$ such that $g\in G$ is elliptic if and only if
$g$ is in $Ker(\theta)$. In particular if $G=\Z\times\Z$ and we have
$g,h\in G$ which are both elliptic but which do not both lie in the same maximal
cyclic subgroup then $G$ acts with a global fixed point.
\end{lem}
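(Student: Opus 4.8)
The plan is to deduce the first assertion almost directly from Lemma \ref{cent}. First I would observe that $G=\Z^n$ is finitely generated, so a standard fact about actions on trees (as in \cite{ser}) applies: a finitely generated group all of whose elements act elliptically must have a global fixed point. Since by hypothesis $G$ has no global fixed point, there is therefore a hyperbolic element $x\in G$. Because $G$ is abelian we have $C_G(x)=G$, so Lemma \ref{cent} applies with the centraliser equal to the whole group, and it hands us exactly the nontrivial homomorphism $\theta:G\to\Z$ (translation length along the axis $A_x$) for which $g$ is elliptic if and only if $g\in\ker(\theta)$. This settles the main statement with essentially no further work; all the substantive content is already packaged in Lemma \ref{cent}.

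For the ``in particular'' clause I would argue by contradiction. Suppose $G=\Z\times\Z$ acts without a global fixed point, with $g$ and $h$ both elliptic. The first part supplies $\theta:\Z^2\to\Z$ detecting ellipticity, so both $g$ and $h$ lie in $\ker(\theta)$. The key algebraic point is that $\ker(\theta)$ is a maximal cyclic subgroup of $\Z^2$: since $\theta$ is nontrivial its image is a nonzero subgroup of $\Z$, hence infinite cyclic, so $\Z^2/\ker(\theta)\cong\Z$ is torsion free; this forces $\ker(\theta)$ to be a rank-one direct summand of $\Z^2$, i.e. a maximal cyclic subgroup. But then $g$ and $h$ both lie in this single maximal cyclic subgroup, contradicting the hypothesis. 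Hence $G$ must act with a global fixed point.

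I do not anticipate a serious obstacle. The only step needing outside input is the passage from ``no global fixed point'' to ``there is a hyperbolic element'', which I would cite rather than reprove (the standard argument being that a product of two elliptic isometries with disjoint fixed-point sets is hyperbolic, so the fixed subtrees of a finite generating set pairwise intersect and then share a common point by the Helly property for subtrees of a tree). The one place to take care is recording that it is the torsion freeness of the quotient $\Z^2/\ker(\theta)$ that upgrades ``$\ker(\theta)$ is cyclic'' to ``$\ker(\theta)$ is \emph{maximal} cyclic'', since maximal cyclicity is precisely the notion appearing in the statement.
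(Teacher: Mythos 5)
Your proof is correct and follows the paper's argument essentially verbatim: Serre's lemma produces a hyperbolic element, $C_G(x)=G$ since $G$ is abelian, and Lemma \ref{cent} supplies $\theta$. For the ``in particular'' clause the paper instead notes that $\langle g,h\rangle$ would be a finite index subgroup of $\Z^2$ on which $\theta$ vanishes, forcing $\theta$ to be trivial; your observation that $\ker(\theta)$ is itself a maximal cyclic subgroup is an equally valid and equivalent way to reach the same contradiction.
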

\begin{proof} There must be some hyperbolic element in $G$ by Serre's
Lemma for finitely generated groups of elliptics, whereupon the first
part follows by Lemma \ref{cent}. The second part holds because
$\langle g,h\rangle$ will generate a finite index subgroup of $G$ on which
$\theta$ is zero.
\end{proof}      

Note that this implies that $\Z^n$ cannot act properly (with or without
preserving factors) on a product of $m<n$ trees, although of course
it does act properly on $\R^n$.

So let us now ask the question: which RAAGs act properly 
on the product of two trees? A necessary condition here is that the
RAAG is a 2 dimensional group: indeed a RAAG
has dimension (say cohomological, or geometric) greater than 2 if and
only if the defining graph contains a triangle. This is because if so
then the RAAG contains $\Z^3$ whereupon it cannot act properly on the
product of two trees, whether locally finite or not and whether preserving
factors or not. On the other hand
the universal cover of its Salvetti complex, which is a CAT(0) cube complex
on which the RAAG acts properly and even cocompactly, will be 2 dimensional
if the defining graph is triangle free. With this in mind, optimists
might expect that all 2 dimensional RAAGs act properly on a product
of two trees whereas pessimists might assume that only free groups and
the direct product of two free groups do, on the grounds that a product
of two trees is much more restrictive than a general 2-dimensional
CAT(0) cube complex.
In fact we will now see that the answer can be said to lie halfway between
these two extremes.

\begin{prop} \label{odd} 
If $G$ is a RAAG whose defining graph is a cycle of odd length then $G$ does
not act properly (with or without swapping factors) on any product of two
trees (whether locally finite or not).
\end{prop}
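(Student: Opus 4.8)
The plan is to show that a RAAG $G$ whose defining graph is an odd cycle $C_{2k+1}$ cannot act properly on a product of two trees. By the remark following Lemma~\ref{z} and the commensurability discussion, it suffices to rule out a proper action preserving factors, since any action becomes factor-preserving on passing to a finite index subgroup, and properness passes up and down finite index. So I assume $G$ acts properly and preserving factors on $T_1\times T_2$, giving two projected actions $\pi_1,\pi_2$ of $G$ on $T_1$ and $T_2$. The key structural feature of the odd cycle is that consecutive generators commute: labelling the vertices $x_1,\ldots,x_{2k+1}$ cyclically, each $x_i$ commutes with $x_{i-1}$ and $x_{i+1}$ (indices mod $2k+1$), so each $\langle x_i,x_{i+1}\rangle\cong\Z\times\Z$.

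First I would apply Proposition~\ref{hyel}: since $G$ is torsion free and acts properly preserving factors, every non identity element must act hyperbolically on at least one of $T_1,T_2$. This gives a ``colouring'' of the generators: assign to each $x_i$ a label in $\{1,2\}$ recording a tree on which $x_i$ is hyperbolic (choosing arbitrarily if it is hyperbolic on both). The heart of the argument is to show that \emph{adjacent generators must receive different labels}, which would force a proper $2$-colouring of the odd cycle $C_{2k+1}$ — an impossibility, yielding the contradiction. To prove adjacency forces distinct labels, I would examine a commuting pair $x_i,x_{i+1}$, both of infinite order with $x_ix_{i+1}=x_{i+1}x_i$, so $\langle x_i,x_{i+1}\rangle\cong\Z\times\Z$. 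If $x_i$ and $x_{i+1}$ were both hyperbolic on the same tree $T_j$, I would derive a contradiction using Lemma~\ref{cent} and Lemma~\ref{z}.

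The main obstacle, and the crux of the proof, is establishing that a $\Z\times\Z$ subgroup cannot have two independent generators both hyperbolic on the \emph{same} factor tree while the whole action stays proper. Here is how I would resolve it. Consider the subgroup $A=\langle x_i,x_{i+1}\rangle\cong\Z\times\Z$ acting on $T_j$ via $\pi_j$. If $A$ has a global fixed point in $T_j$ then both generators are elliptic there, contradicting that $x_i$ (say) was labelled $j$ for being hyperbolic. So $A$ acts without a global fixed point, and Lemma~\ref{z} supplies a non trivial homomorphism $\theta:A\to\Z$ whose kernel is exactly the elliptic elements of $A$ acting on $T_j$. Now the element $x_i^{a}x_{i+1}^{b}$ with $(a,b)$ spanning $\mathrm{Ker}(\theta)$ is elliptic on $T_j$; since $\mathrm{Ker}(\theta)$ is a rank one subgroup of $\Z\times\Z$, it is infinite, so it contains a non identity element $g$ that is elliptic on $T_j$. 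This $g$ must then be hyperbolic on the \emph{other} tree $T_{j'}$ by Proposition~\ref{hyel}. The point is that $g$ commutes with the whole of $A$, so by Lemma~\ref{cent} applied to $T_{j'}$, the centraliser condition forces $x_i,x_{i+1}$ to preserve the axis $A_g$ in $T_{j'}$; tracking translation lengths via the homomorphism of Lemma~\ref{cent} shows the two hyperbolic-on-$T_j$ generators must in fact project to commuting hyperbolics on $T_{j'}$ sharing an axis, which constrains them to a common maximal cyclic subgroup — contradicting the independence of $x_i$ and $x_{i+1}$ in $\Z\times\Z$.

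Thus adjacent generators cannot share a label, the labelling is a proper $2$-colouring of $C_{2k+1}$, and since an odd cycle has chromatic number $3$ no such colouring exists. This contradiction shows no proper factor-preserving action on a product of two trees exists, and hence (by the reduction in the first paragraph) no proper action at all, with or without swapping factors. I would close by noting that the argument only used that the defining graph contains an odd closed path forcing an odd cycle obstruction, which is exactly the converse direction pairing with Corollary~\ref{chr} to pin down the chromatic-number-$2$ characterisation.
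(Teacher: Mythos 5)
Your reduction to the factor-preserving case and your use of Proposition \ref{hyel} to label each generator by a tree on which it is hyperbolic both match the general strategy, but the crux of your argument --- that two adjacent (hence commuting) generators $x_i,x_{i+1}$ cannot both be hyperbolic on the same factor tree --- is false, and the contradiction you sketch does not materialise. Take $\Z\times\Z=\langle x,y\rangle$ acting on a product of two simplicial lines with $x$ translating by $(+1,+1)$ and $y$ by $(+1,-1)$: this action is proper, preserves factors, and both generators are hyperbolic on both trees. In your notation the element $g=xy^{-1}$ spanning $\mathrm{Ker}(\theta)$ is elliptic on the first tree and hyperbolic on the second, both $x$ and $y$ preserve its axis there and translate along it, yet they are independent in $\Z\times\Z$. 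Two commuting hyperbolic isometries sharing an axis are in no way forced into a common maximal cyclic subgroup: Lemma \ref{cent} only yields a homomorphism to $\Z$ recording translation length, and two independent elements can both have nonzero image. So the ``proper $2$-colouring of the odd cycle'' you construct need not exist, and the argument collapses at its key step.

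The paper's proof runs the parity argument in the opposite direction. The correct consequence of Lemma \ref{z} is that two adjacent generators cannot both be \emph{elliptic} on the same tree (else the whole $\Z\times\Z$ they generate would be elliptic there and would have to act purely hyperbolically on the other tree, which Lemma \ref{z} forbids). Combined with the fact that each generator is hyperbolic somewhere, oddness of the cycle then forces the existence of a generator $x_j$ that is hyperbolic on \emph{both} trees. Lemma \ref{cent} now gives homomorphisms $\theta_1,\theta_2:C_G(x_j)\rightarrow\Z$ whose kernels consist of elliptics; since $x_{j-1},x_{j+1}\in C_G(x_j)$, the commutator $[x_{j-1},x_{j+1}]$ lies in both kernels, hence is elliptic in both actions, and for $c>3$ it has infinite order because $x_{j-1}$ and $x_{j+1}$ generate a free subgroup. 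That contradicts Proposition \ref{hyel}; the case $c=3$ is handled separately since the triangle RAAG is $\Z^3$. Finally, your dismissal of the factor-swapping case is too quick: the factor-preserving finite-index subgroup $H$ need not be an odd-cycle RAAG, so one needs the observation that some uniform power $x_i^k$ of every generator lies in $H$ and that the argument applies verbatim to these powers.
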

\begin{proof} We first assume that $G$ acts properly by preserving
the two factors $T_1,T_2$. Let the standard generators of $G$ be
$x_1,\ldots ,x_c$ for $c$ odd with all subscripts taken modulo $c$, so
that $x_i$ commutes with $x_{i-1}$ and $x_{i+1}$, generating a copy
of $\Z^2$ in each case. As discussed above, we
can assume that $c>3$. Now consider the group $\langle x,y\rangle=\Z^2$
acting properly on only two trees. From Lemma \ref{z}, if both $x$ and $y$
act as elliptic elements on one tree then so does all of $\langle x,y\rangle$,
so we would require every element of $\langle x,y\rangle$ to act hyperbolically
on the other tree but this is impossible, even if $x$ and $y$ are themselves
both hyperbolic elements.

This implies for $G$ that in each of the two tree actions, we do not have
two successive generators $x_i,x_{i+1}$ which are both elliptic elements
in the same action. But of course for a proper action we must have that
each $x_i$ acts hyperbolically in at least one of the two actions.
Consequently there must exist some $j$ where a generator
$x_j$ is hyperbolic in both actions, as otherwise we would have to alternate
between hyperbolic and elliptic elements in one action and the reverse
in the other, but this cannot occur because $c$ is odd.

As the neighbouring generators $x_{j-1},x_{j+1}$ lie in the centraliser
$C_G(x_j)$, Lemma \ref{z} tells us that for each of the two actions
we have homomorphisms $\theta_1,\theta_2:C_G(x_j)\rightarrow \Z$ with
the kernels consisting entirely of elliptic elements. Now $x_{j-1}$ and
$x_{j+1}$ need not themselves be elliptic (for instance the homomorphism
could send each of $x_{j-1},x_j,x_{j+1}$ to 1) but $[x_{j-1},x_{j+1}]$ certainly
will be elliptic in either action and this is an infinite order element
of $G$ if $c>3$
(for instance, use the homomorphism from $G$ to $F_2$ sending
$x_{j-1},x_{j+1}$ to a generating pair and other generators to the identity).

To rule out $G$ acting permuting factors, we will use a very useful trick
for RAAGs, as observed in \cite{kk}, which is that for any $H\leq_f G$,
there exists $k\geq 1$ such that for all of our standard generators $x_i$
we have $x_i^k\in H$. Therefore if $G$ acted properly, we could take such an
$H$ preserving factors. But now the argument runs exactly as before when
applied to $\langle x_1^k,\ldots ,x_c^k\rangle$ in $H$, rather than
$x_1,\ldots ,x_c$ in $G$.
\end{proof}

\begin{co} \label{oddc}
If $G$ is a RAAG with defining graph containing an odd length
closed path then $G$ does not act properly (whether preserving factors or
otherwise) on the product of two trees (whether locally finite or not).
\end{co}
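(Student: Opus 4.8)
The plan is to reduce this to Proposition \ref{odd} by locating an \emph{induced} odd cycle inside $\Gamma$ and then passing to the corresponding standard subgroup. First I would recall the standard fact that a finite graph contains a closed path of odd length if and only if it contains a simple cycle of odd length (equivalently, $\Gamma$ is not bipartite). Among all odd cycles in $\Gamma$ I would choose one, say $C$, of minimal length $c$. The key combinatorial observation is that a shortest odd cycle has no chords: a chord $v_iv_j$ would split $C$ into two smaller cycles whose lengths sum to $c+2$, so one of them would be odd and strictly shorter than $c$, contradicting minimality. Hence $C$ is an induced subgraph of $\Gamma$, isomorphic to the cycle graph on $c$ vertices with $c$ odd.

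Next I would invoke the standard RAAG fact that, for an induced subgraph $\Lambda\subseteq\Gamma$, the subgroup of $G(\Gamma)$ generated by the vertices of $\Lambda$ is isomorphic to $G(\Lambda)$; indeed it is a retract of $G(\Gamma)$, the retraction sending every generator outside $\Lambda$ to the identity (this is well defined precisely because $\Lambda$ is induced). Applying this with $\Lambda=C$ places a copy of the odd-cycle RAAG $G(C)$ inside $G(\Gamma)$. Since acting properly on a finite product of trees passes to subgroups by restriction — the stabiliser of a vertex in the subgroup is contained in its stabiliser in the ambient group, and this holds whether or not the action swaps factors — any proper action of $G(\Gamma)$ on a product of two trees would restrict to a proper action of $G(C)$ on the same product.

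Finally I would derive the contradiction. When $c\geq 5$, Proposition \ref{odd} says directly that $G(C)$ admits no proper action (with or without swapping factors) on a product of two trees, yet the restricted action is exactly such an action — absurd. The only remaining case is $c=3$, where $C$ is a triangle and $G(C)\cong\Z^3$; here the remark following Lemma \ref{z}, that $\Z^n$ cannot act properly on a product of fewer than $n$ trees, rules out a proper action on two trees. In either case $G(\Gamma)$ cannot act properly on any product of two trees, whether preserving factors or not.

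I expect the only point needing genuine care to be the minimal-odd-cycle step — ensuring the chosen cycle is chordless so that the induced subgraph really is a cycle RAAG rather than something with extra commuting relations — together with remembering to treat the triangle case $c=3$ separately, since the proof of Proposition \ref{odd} explicitly assumed $c>3$. Everything past that is a formal inheritance-to-subgroups argument and requires no further computation.
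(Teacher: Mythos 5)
Your proof is correct and follows essentially the same route as the paper: reduce to an induced odd cycle in the defining graph, pass to the corresponding RAAG subgroup, and invoke Proposition \ref{odd} (with the triangle case $c=3$ handled by the $\Z^3$ observation, exactly as the paper's proof of Proposition \ref{odd} does by deferring to the earlier discussion). The only cosmetic difference is that you obtain chordlessness in one step by taking a minimal-length odd cycle, whereas the paper iteratively splits the closed path at intersection points and along chords; both arguments are routine and equivalent.
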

\begin{proof} We just need to show that $G$ contains a subgroup as in
Proposition \ref{odd}. First of all we can assume the closed path is an
embedded cycle, because on splitting the path in two at an intersection
point, one of these closed subpaths must have odd length. Next we can
assume that this cycle is actually induced, else we can add in another edge
to the cycle from the defining graph and split to obtain two shorter cycles,
one of which has odd length. On continuing, we end with an induced cycle
of odd length at least 3 so the corresponding RAAG is a subgroup of $G$.
\end{proof}

A much studied question is when one RAAG is a subgroup of another, see
\cite{kk} and other papers cited there for a range of results. Using the
above, we can obtain a quick and straightforward result for ourselves:
\begin{co} \label{sbgr}
A RAAG with defining graph having a closed path of odd length cannot
be a subgroup of a RAAG with defining graph having no closed path of odd length.
\end{co}
\begin{proof} The second graph has chromatic number two and so acts properly
on a product of two trees preserving factors, by Corollary \ref{chr}. This
property is clearly preserved by subgroups but the first group does not have
such an action by Corollary \ref{oddc}.
\end{proof}

Intriguingly, we have not seen this result stated directly in the literature
though it can be deduced from known work, such as Theorem 1.11 in \cite{kk}
which states that for two finite graphs $\Gamma, \Delta$ with $\Delta$ triangle
free, the RAAG $G(\Gamma)$ embeds in $G(\Delta)$ if and only if $\Gamma$
is an induced subgraph of the extension graph of $\Delta$. However this 
extension graph has the same chromatic number as $\Delta$, so we recover
the above by taking $\Gamma$ to be an odd length cycle (of length at least 5) 
and $\Delta$ to have no closed path of odd length, thus nor does its extension
graph. (We thank the second author for indicating this argument.)
\section{Proper actions in the locally finite case}

We now consider the case where our trees are all locally finite. 
We first point out that if we are dealing with finitely generated
groups then we do not need to worry about the distinction between
our various types of locally finite trees.

\begin{lem} \label{equiv}
If $G$ is a finitely generated group then $G$ acts
properly on a finite product of locally finite trees if and only
if $G$ acts properly on a finite product of uniformly bounded trees
if and only if $G$ acts properly on a finite product of $d$-regular
trees for some finite $d$.
\end{lem}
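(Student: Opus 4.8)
The plan is to prove the two trivial implications alongside the single substantive direction. Since a $d$-regular tree is uniformly bounded and a uniformly bounded tree is locally finite, the very same product witnesses the passage from $d$-regular to uniformly bounded to locally finite; so it remains to upgrade a proper action on a product of locally finite trees to one on a product of $d$-regular trees for a common finite $d$. First I would reduce to the factor-preserving case by taking $H\leq_f G$ preserving factors: then $H$ is again finitely generated and, as properness passes to subgroups, $H$ still acts properly on the given product. Having produced a proper action of $H$ on a product of copies of $T_d$, I would recover one for $G$ by the induced-action construction of Section 5, which turns an action of $H$ on $P$ into one of $G$ on $P^{[G:H]}$; applied to a product of copies of $T_d$ this again yields a product of copies of $T_d$, so the degree is preserved.

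Working with the finitely generated, factor-preserving group $H$, the first step is to replace locally finite factors by uniformly bounded ones. On any factor $T_i$ on which $H$ has a global fixed point $u_i$, setting the $i$-th coordinate equal to $u_i$ shows that the intersection of the remaining vertex stabilisers is already finite, so that factor may be discarded without losing properness; if every factor is discarded then $H$ is finite and there is nothing to prove. On each surviving factor $H$ acts without a global fixed point, so exactly as in the proof of Theorem \ref{fthtre} finite generation lets me choose an $H$-invariant subtree $T_i^0$ with finite quotient $H\backslash T_i^0$. Then $T_i^0$ has only finitely many vertex orbits, and as automorphisms preserve degree each orbit carries a single degree; hence only finitely many degrees occur and $T_i^0$ is uniformly bounded, of maximal degree $d_i$ say. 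Properness is inherited for free, since every vertex of $\prod_i T_i^0$ is a vertex of the product $\prod_i T_i$ and so has the same finite stabiliser.

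For the final step I would embed each $T_i^0$ into the regular tree $T_d$ with $d=\max_i d_i$, using the construction of Section 3, which also gives $Aut(T_i^0)\leq Aut(T_d)$ and hence extends the action of $H$ to $\prod_i T_d$ while keeping each $T_i^0$ invariant. The key point, and the one place the argument needs real care, is that properness survives this enlargement; I would establish it using the nearest-point projection onto $T_i^0$. Since each $h\in H$ preserves the subtree $T_i^0\subseteq T_d$ and is an isometry, it commutes with the projection, so if $h$ fixes a vertex $w\in T_d$ then it fixes the projection $v\in T_i^0$, giving $H_w\subseteq H_v$. Consequently the stabiliser in $\prod_i T_d$ of any vertex $(w_1,\dots,w_n)$ lies in the stabiliser of the tuple $(v_1,\dots,v_n)\in\prod_i T_i^0$ of its projections, which is finite by the previous step. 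This yields a proper action of $H$, and hence of $G$, on a product of $d$-regular trees. The main obstacle throughout is not the construction of the trees but the bookkeeping that properness, namely finiteness of every vertex stabiliser, is preserved under each replacement; finite generation enters precisely to force the invariant subtrees to have finite quotient, which is exactly what converts local finiteness into a uniform degree bound.
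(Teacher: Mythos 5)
Your proposal is correct and follows essentially the same route as the paper: pass to a factor-preserving finite-index subgroup, use finite generation to replace each factor by an invariant subtree with finite quotient (hence uniformly bounded), then complete to regular trees via the Section 3 construction, checking properness at each stage. The only differences are cosmetic — you discard fixed-point factors where the paper passes to cores, and your nearest-point-projection argument for the last step is the same observation the paper phrases as "a new vertex lies in a rooted tree whose root was already in $T$".
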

\begin{proof}
First suppose that the trees $T_1,\ldots ,T_k$
are locally finite.
We can suppose that $G$ preserves factors by dropping down
to a finite index subgroup and then taking an induced action at the
end (which will preserve properness of the action).
Let us therefore consider the action of $G$ on each 
$T_i$. As $G$ is finitely
generated, its action on the core $C(T_i)$ has quotient 
$G\backslash C(T_i)$ which is a finite graph and this clearly has bounded
valence. Moreover the vertices in $C(T_i)$ fall into
finitely many orbits and within each orbit any vertex can be moved to any
other vertex by an automorphism of $C(T_i)$, so they have the same
finite degree (even though this could be much bigger than the degree of the 
corresponding vertex in the quotient graph), thus we have a
finite upper bound for the degree of vertices in $C(T_i)$.

We then replace each $T_i$ in the product with its core $C(T_i)$,
which is invariant under $G$ and with the action on
$C(T_1)\times\ldots\times C(T_k)$ still proper.

Now suppose that the trees are uniformly bounded. We can use the construction
as described at the start of Section 3 to turn each tree $T$
into the $d$-regular
tree $T_d$ for some $d$ by adding rooted trees at the vertices with degree
less than $d$, and $G$ will also act on $T_d$. Moreover if $G$ acts
properly on the the original product $T_1\times \ldots \times T_k$ then
it will also do so on the new product $T_d\times\ldots\times T_d$ of
regular trees. This is because any new vertex $v$ added to $T$ lies in
a rooted tree whose root $w$ was already a vertex of $T$, so that if
an automorphism of $T$ fixes $v$ then it also fixes $w$.

The final step is clear because all $d$-regular trees are locally finite.
\end{proof}

Let us consider which groups $G$ we already know act properly on a finite 
product ($k$ say)
of such trees. On assuming that $G$ acts preserving factors, we have  
direct products of $k$ free groups, where the $i$th factor group acts purely 
hyperbolically on the $i$th factor tree and trivially on all other factor 
trees. Other examples for $k=2$ include the Burger - Mozes
groups and lamplighter groups of the form $C_p\wr \Z$, as well as all 
subgroups of the above. If we now wish to find a group which does not
act properly and preserving factors in the locally finite case but which
does do so in the general case, we will need some means of distinguishing
between these two situations. The following straightforward Proposition is 
the only such result we will utilise here.

\begin{prop} \label{aclf}
Suppose that a group $G$ is acting on a locally finite tree $T$ where we have
two elements $g,h$ which act elliptically. Then there is
$n\in\N$ such that $g^n$ and $h$ have a common fixed point,
and so $\langle g^n,h\rangle$ consists entirely of elliptic elements.
\end{prop}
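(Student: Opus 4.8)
The plan is to exploit local finiteness to force the $\langle g\rangle$-orbit of a fixed point of $h$ to be finite. First I would recall that an elliptic element of a tree action has a nonempty fixed-point set, so I choose a point $p$ fixed by $g$ and a point $q$ fixed by $h$ (after barycentric subdivision if one prefers to work only with vertices, though the argument is purely metric and does not require this). The goal is then to produce an integer $n\geq 1$ with $g^n(q)=q$, for once this is done $q$ is a common fixed point of $g^n$ and $h$.

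The key observation is that every power $g^k$ also fixes $p$, so each $g^k$ is an isometry fixing $p$ and hence $d(p,g^k(q))=d(p,q)$ for all $k$. Thus the entire orbit $\{g^k(q):k\in\Z\}$ lies on the sphere of radius $d(p,q)$ centred at $p$. Because $T$ is locally finite, this sphere consists of only finitely many points, so the orbit is a finite set. The cyclic group $\langle g\rangle$ permutes this finite set, so some positive power $g^n$ fixes $q$; concretely one takes $n$ to be the least positive integer with $g^n(q)=q$.

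With this $n$ in hand, $q$ is fixed by both $g^n$ and $h$, so the subgroup $\langle g^n,h\rangle$ fixes $q$. Since every element of a group that fixes a point of $T$ is elliptic, this gives the final assertion at once.

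The only place where the hypotheses genuinely enter, and therefore the step to watch, is the finiteness of the sphere about $p$: this is exactly where local finiteness of $T$ is used, and the conclusion fails without it. For instance, in the Bass--Serre tree of $\Z*\Z=\langle g\rangle * \langle h\rangle$ the generators are elliptic but $g$ permutes the (infinitely many) neighbours of its fixed vertex in a single infinite orbit, so no power $g^n$ fixes the vertex $\mbox{Fix}(h)$ consists of, and indeed $gh$ is hyperbolic. Everything else in the argument is routine: that fixed-point sets are nonempty for elliptics and that isometries preserve distances.
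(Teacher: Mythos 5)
Your proof is correct and follows essentially the same route as the paper: both arguments use local finiteness to conclude that the $\langle g\rangle$-orbit of a fixed point of $h$ is finite, so some power $g^n$ fixes it. The only cosmetic difference is that the paper gets this by inductively fixing larger and larger balls around the fixed point of $g$, whereas you observe directly that the orbit lies on a finite sphere; both are fine.
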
 
\begin{proof}
Suppose that $g$ fixes the vertex $v\in T$ and $h$ fixes the vertex $w$.
Then a power of $g$ will fix the edges incident at $v$, then a
further power of that will fix the vertices distance 2 away from $v$ and
so on. Consequently for some large $n$ we get that $g^n$ also fixes $w$
and so $\langle g^n,h\rangle$ has the global fixed point $w$.
\end{proof}

An interesting class of groups for testing out various conjectures
are the free by cyclic groups $F_n\rtimes_\alpha\Z$, some of which are
CAT(0) and some of which are not. Here we consider the group 
$G=F_2\rtimes_\alpha\Z$ where $F_2=\langle a,b\rangle$ and 
$\alpha(a)=ab$, $\alpha(b)=b$, so that
\[G=\langle a,b,t|tat^{-1}=ab,tbt^{-1}=b\rangle.\]
This group is not word hyperbolic but does
act properly and cocompactly on a 2 dimensional cube complex (for instance
consider the alternative presentation $\langle s,x,y|[x,y],sxs^{-1}=y\rangle$
where $s=a^{-1}, x=t, y=bt=tb$ and draw out the two squares, noting that all
8 vertices are identified, and use Gromov's link condition). 
We first examine the action of $G$ on a product
of trees in the general case. 

\begin{prop} \label{thrtr}
The group
\[G=\langle a,b,t|tat^{-1}=ab,tbt^{-1}=b\rangle\]
acts properly preserving factors on a product of three trees but not on a
product of two trees.
\end{prop}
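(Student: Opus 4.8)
The plan is to treat the two assertions separately, throughout writing $c=aba^{-1}$ and exploiting the subgroup $K=\langle b,c,t\rangle$. Since $\alpha(b)=b$ and $\alpha(aba^{-1})=aba^{-1}$, both $b$ and $c$ commute with $t$, while $\langle b,c\rangle$ is free of rank $2$ (being generated by the two distinct free generators $b$ and $aba^{-1}$ of the kernel of the map $F_2\to\Z$, $a\mapsto 1,b\mapsto 0$); thus $K\cong\Z\times F_2$ with $t$ central. The two facts I will lean on are that $b$ and $c$ are conjugate in $G$ (via $a$) and that $ata^{-1}=c^{-1}t$, equivalently $[a,t]=c^{-1}$. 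For both parts I first reduce to actions preserving factors: by the $H\leq_f G$ remark in the proof of Proposition~\ref{eqpr}, properness passes between $G$ and a factor--preserving subgroup, and in the non--preserving case with two trees I would pass to the index $\leq 2$ subgroup $G_0$ preserving factors, noting that $b,c\in G_0$ automatically since both lie in $[G,G]$, and handle the conjugator $a$ as in Proposition~\ref{odd}.

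For the proper action on three trees I would invoke Proposition~\ref{hyel} and produce three $G$--trees on which every nontrivial element is hyperbolic on at least one factor. The cleanest source is the proper cocompact action of $G$ on the $2$--dimensional CAT(0) square complex $X$ already exhibited: as $G$ is torsion free this action is free, and the hyperplanes of $X$ fall into the three $G$--invariant parallelism classes coming from the three edge types $s,x,y$. Collapsing all hyperplanes outside one class yields, for each class, a $G$--tree (its dual Bass--Serre tree), and $G$ acts preserving these three factors. Any $g\neq 1$ acts freely on $X$, hence translates an edge and so crosses some hyperplane transversally, making $g$ hyperbolic on the corresponding dual tree; Proposition~\ref{hyel} then gives a proper factor--preserving action on the product of the three trees. (Equivalently one can assemble the three trees by hand, as the Bass--Serre trees detecting the $t$--direction, the $a$--direction, and the remaining $\langle b,c\rangle$--type elements, in the style of Theorem~\ref{rg}.)

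For the non--existence on two trees, suppose $G$ acts properly on $T_1\times T_2$ preserving factors; since $G$ is torsion free, Proposition~\ref{hyel} says it suffices to exhibit a nontrivial element elliptic on \emph{both} trees. If $t$ were hyperbolic on both, Lemma~\ref{cent} gives homomorphisms $\theta_i\colon K\to\Z$ whose kernels are exactly the elements of $K$ acting elliptically on $T_i$; then $[b,c]$ lies in $\ker\theta_1\cap\ker\theta_2$ and is a nontrivial element elliptic on both factors, done. Hence $t$ is hyperbolic on exactly one tree, say $T_1$, and elliptic on $T_2$. Next I claim $c$ is elliptic on $T_2$: otherwise $c$ is hyperbolic on $T_2$ while commuting with the elliptic $t$, which forces $t$ to fix the axis of $c$ pointwise, so $c^{-1}t$ translates that axis and is hyperbolic on $T_2$; but $c^{-1}t=ata^{-1}$ is conjugate to the elliptic $t$, a contradiction. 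Therefore $c$, and with it its conjugate $b$, is elliptic on $T_2$, and so by properness both are hyperbolic on $T_1$.

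It remains to derive a contradiction from this last configuration, and this is the step I expect to be the main obstacle. On $T_1$ I would apply Lemma~\ref{cent} to get $\theta_1\colon K\to\Z$ with $\theta_1(t)=\ell\neq 0$; comparing translation lengths in $ata^{-1}=c^{-1}t$ (a hyperbolic isometry of length $\ell$ lying on the axis of $t_1$) forces $|\theta_1(c)|=2\ell$, and conjugacy of $b$ with $c$ gives $|\theta_1(b)|=2\ell$ as well, so $bt^{\mp 2}\in\ker\theta_1$ is elliptic on $T_1$. On the other side $b$ and $t$ are commuting elliptic isometries of $T_2$, hence share a fixed vertex, so $bt^{\mp 2}$ is elliptic on $T_2$ too; as it has nonzero $t$--exponent it is nontrivial, and being elliptic on both factors it contradicts properness by Proposition~\ref{hyel}. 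The delicate points, where care is most needed, are verifying that commuting elliptic isometries of a tree really do have a common fixed vertex (so that $b,t$ do on $T_2$), pinning down the translation--length computation on $T_1$ via Lemma~\ref{cent} so that $b,c,t$ all lie on the axis of $t_1$, and, in the factor--permuting case, controlling the conjugator $a$ when it interchanges the two factors.
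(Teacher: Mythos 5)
Your treatment of the two--tree half is correct. The case where $t$ is hyperbolic on both factors is exactly the paper's argument: $aba^{-1}$ and $b$ commute with $t$, so by Lemma \ref{cent} their commutator lies in the kernel of both homomorphisms and is elliptic on both trees. In the remaining case the paper is much quicker than you are: if $t$ is elliptic on $T_2$ then so is its conjugate $bt=a^{-1}ta$, so by Lemma \ref{z} all of $\langle t,bt\rangle\cong\Z\times\Z$ is elliptic on $T_2$, while on $T_1$ the kernel of the homomorphism from Lemma \ref{z} restricted to this $\Z\times\Z$ is a nontrivial group of elliptics; any nontrivial element of it is elliptic on both factors. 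Your longer route through the ellipticity of $c$ on $T_2$ and the translation--length computation does arrive at such an element ($bt^{\pm2}$), and the two points you flag as delicate are already covered: commuting elliptics $b,t$ generate a $\Z\times\Z$, so Lemma \ref{z} gives the common fixed point on $T_2$, and $\theta_1(c)=2\ell$ rather than $0$ because you have shown $c$ is hyperbolic on $T_1$. Note also that the proposition only asserts non--existence of factor--preserving actions on two trees, so your sketch for the factor--permuting case is not needed; as written it would not work, since $a$ need not lie in the factor--preserving subgroup and then $b,c$ (and $t,bt$) are no longer conjugate there --- precisely the difficulty discussed after Theorem \ref{noac}.

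The genuine gap is in the three--tree construction. The hyperplanes of the square complex $X$ do not fall into three classes indexed by the edge types $s,x,y$. In the square with boundary $sxs^{-1}y^{-1}$ the two midcubes join the two $s$--sides to each other and the $x$--side to the $y$--side, so every hyperplane dual to an $x$--edge continues through such a square into a $y$--edge: there are only two $G$--orbits of hyperplanes, the $s$--type and a single merged $\{x,y\}$--type. Worse, in the square with boundary $xyx^{-1}y^{-1}$ the two crossing midcubes belong to two distinct hyperplanes both of the merged $\{x,y\}$--type, so that class contains crossing pairs and its restriction quotient is a two--dimensional cube complex, not a tree. Your proposed decomposition therefore yields only one tree factor (the one dual to the $s$--hyperplanes, which is the Bass--Serre tree of the HNN splitting over $\langle x,y\rangle$), and the parenthetical ``assemble by hand'' remark is too vague to fill the hole. (Even with a valid wall partition, ``$g$ moves an edge, hence crosses a hyperplane, hence is hyperbolic on the dual tree'' is not a valid inference --- an elliptic tree isometry can move edges; one needs $g$ to skewer a wall of that class.) The paper's actual construction is different: it takes the Bass--Serre tree of the HNN splitting, on which every elliptic element is conjugate into $\Z\times\Z=\langle x,y\rangle$, and then kills the remaining elliptics with two actions on the simplicial line --- one by translations via $x,y\mapsto 1$, $s\mapsto 0$, whose only elliptic elements in $\langle x,y\rangle$ are the $x^iy^{-i}$, and one through the infinite dihedral group via $s\mapsto(p\mapsto -p)$, $x\mapsto(p\mapsto p+1)$, $y\mapsto(p\mapsto p-1)$, which makes each $x^iy^{-i}$ with $i\neq 0$ hyperbolic. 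Proposition \ref{hyel} then finishes. You would need to replace your hyperplane argument with something of this kind.
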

\begin{proof}
We have $a^{-1}ta=tb$ with $\langle t,tb\rangle\cong \Z^2$. Thus in any action
of $G$ on a tree, either $t$ and $tb$ are both elliptic or both hyperbolic.
This means that in any proper action preserving factors of $G$ on the
product of two trees, we must have that $t$ is hyperbolic in each of the
two actions or else some element of $\langle t,tb\rangle$ will be elliptic 
in both. But if this is so then, as $aba^{-1}$ and $b$ both commute with $t$,
the element $[aba^{-1},b]$ will be elliptic in each action.

To obtain an action with three trees, we consider the other presentation
$\langle s,x,y|[x,y],sxs^{-1}=y\rangle$ which expresses $G$ as an HNN
extension in which every elliptic element is conjugate into 
$\langle x,y\rangle\cong\Z^2$. But the homomorphism $\theta:G\rightarrow\Z$ 
given by $\theta(s)=0,\theta(x)=1,\theta(y)=1$ gives rise to an action of $G$
on $\R$ where the only elliptic elements of $\langle x,y\rangle$ are those
of the form $x^iy^{-i}$.

But we also have a homomorphism $\phi$ from $G$ to the infinite dihedral 
group $D_\infty$ acting on $\R$, where $\phi(s)$ is the map $p\mapsto -p$, and
$x,y$ are sent to the translations $p\mapsto p+1$, $p\mapsto p-1$ respectively.
Then if $i\neq 0$ this will send $x^iy^{-i}$ to the hyperbolic element
$p\mapsto p+2i$.
\end{proof}

We can now show a strong negative result for this
group in the locally finite case. 

\begin{thm} \label{noac} 
The group \[G=\langle a,b,t|tat^{-1}=ab,tbt^{-1}=b\rangle\]
cannot act properly on any finite product $T_1\times\ldots\times T_k$
of locally finite trees preserving factors.
\end{thm}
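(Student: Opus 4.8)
The plan is to assume that $G$ acts properly and preserving factors on a finite product $P=T_1\times\ldots\times T_k$ of locally finite trees and to manufacture a single non identity element of $G$ that acts elliptically on every factor. Since such an element fixes a vertex of $P$ and has infinite order (as $G=F_2\rtimes\Z$ is torsion free), it would lie in a vertex stabiliser of infinite order, contradicting properness. Writing each $g\in G$ as $(g_1,\ldots,g_k)$ with $g_i\in Aut(T_i)$, the elements I would work with are $b$, $aba^{-1}$ and $t$, using the relations $tbt^{-1}=b$ and $t(aba^{-1})t^{-1}=(ab)b(ab)^{-1}=aba^{-1}$, so that both $b$ and $aba^{-1}$ lie in $C_G(t)$, together with the identity $a^{-1}ta=tb$, which shows that $tb$ is a conjugate of $t$ and that $aba^{-1}$ is a conjugate of $b$.

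I would split the factors into the set $H$ of trees on which $t$ is hyperbolic and the set $E$ of trees on which $t$ is elliptic. On each tree $i\in H$, Lemma \ref{cent} supplies a homomorphism $\theta_i:C_G(t)\to\Z$ whose kernel is exactly the elliptic elements of $C_G(t)$; since any commutator of elements of $C_G(t)$ lies in this kernel, every such commutator is automatically elliptic on the trees of $H$, and this step needs no local finiteness. The work is all on $E$, and this is precisely where Proposition \ref{aclf}, hence local finiteness, is essential. On such a tree $t$ is elliptic, so its conjugate $tb$ is elliptic too; applying Proposition \ref{aclf} to the pair $t,tb$ gives a power $t^{m}$ sharing a fixed point with $tb$, and since $b^{m}=(tb)^{m}t^{-m}$ lies in $\langle t^{m},tb\rangle$ I deduce that $b^{m}$, and hence $b$, is elliptic; as $aba^{-1}$ is conjugate to $b$ it is elliptic as well.

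The main obstacle appears when I apply Proposition \ref{aclf} a second time, to the elliptic pair $aba^{-1}$ and $b$: this produces for each $i\in E$ an exponent $N_i$ with $\langle(aba^{-1})^{N_i},b\rangle$ fixing a point, but the exponent depends on the tree, so the fixed commutator $[aba^{-1},b]$ need not be elliptic on $E$ (indeed it is hyperbolic on the non locally finite factor built in Proposition \ref{thrtr}). The resolution is to exploit that the product is finite: setting $N=\mathrm{lcm}_{i\in E}N_i$ and $d=[(aba^{-1})^{N},b]=[ab^{N}a^{-1},b]$, the element $(aba^{-1})^{N}$ is a power of each $(aba^{-1})^{N_i}$ and so fixes the common fixed point of $(aba^{-1})^{N_i}$ and $b$, whence $d$ is elliptic on every tree of $E$.

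Finally I would check that $d$ is non trivial in $G$ (it is, since $ab^{N}a^{-1}$ and $b$ generate a rank two free subgroup of $\langle a,b\rangle$ and so do not commute) and that $d$ is elliptic on $H$ too, which holds because $d$ is a commutator of the elements $(aba^{-1})^{N}$ and $b$ of $C_G(t)$ and therefore lies in $\ker\theta_i$ for each $i\in H$. Thus $d$ is a non trivial, infinite order element acting elliptically on all of $T_1,\ldots,T_k$, fixing a vertex of $P$ whose stabiliser is consequently infinite, contradicting properness. I expect the only delicate points to be the standard fact that $b^{m}$ elliptic forces $b$ elliptic (a power of a hyperbolic tree isometry is hyperbolic) and the bookkeeping that passing to the common exponent $N$ leaves the argument on $H$ untouched.
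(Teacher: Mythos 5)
Your proof is correct and follows essentially the same route as the paper: split the factors according to whether $t$ is hyperbolic or elliptic, use Lemma \ref{cent} on the hyperbolic factors and Proposition \ref{aclf} (hence local finiteness) on the elliptic ones, and combine the exponents over the finitely many factors to produce a single infinite order elliptic commutator. The only cosmetic differences are that the paper deduces ellipticity of $b$ on the elliptic factors from Lemma \ref{z} rather than from a second application of Proposition \ref{aclf}, and works with the commutator $[ab^Na^{-1},b^N]$ where $N$ is the product rather than the lcm of the exponents.
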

\begin{proof} The group $G$ is torsion free, so suppose that the element $t$
acts hyperbolically in the action of $G$ on some factor tree $T_i$. Now
$aba^{-1}$ and $b$ commute with $t$ in $G$ but 
$\langle aba^{-1},b\rangle\cong F_2$. So by Lemma \ref{cent} we have that
for all $m,n>0$ the infinite order element $[ab^ma^{-1},b^n]$ acts elliptically
on the tree $T_i$.

Now suppose $t$ acts elliptically in the action of $G$ on some other factor
tree $T_j$. As $a^{-1}ta=bt$, we have that $bt$ acts elliptically too and thus
so does $b$ by Lemma \ref{z}. Again $aba^{-1}$ is also elliptic, so by the
local finiteness of $T_j$ and Proposition \ref{aclf} there is some
$n_j>0$ such that $ab^{n_j}a^{-1}$ and $b^{n_j}$ share a fixed point. This
also holds for all multiples of $n_j$, so for all $k>0$ we have that
$[ab^{kn_j}a^{-1},b^{kn_j}]$ is elliptic when acting on $T_j$. Thus if we take
$N$ to be the product of the $n_j$s over all $j$ where $t$ acts elliptically
on $T_j$, we find that the element $[ab^Na^{-1}, b^N]$ is elliptic in all of
the product actions where $t$ acts elliptically, and indeed when $t$ acts
hyperbolically too.
\end{proof} 

We end this section by discussing the question of whether this group $G$
can act properly on a finite product of locally finite trees if it is
allowed to permute the factors. On taking a finite index subgroup $H$
of $G$ which we suppose does act properly on a finite product of locally 
finite trees preserving factors, let us first suppose that the element
$a\in H$. Now $t$ and $b$ need not be in $H$ but there will exist some
$i>0$ such that $t^i$ and $b^i$ are. These elements will also commute
with each other and $t^i$, $(tb)^i$ are still conjugate in $H$ under
our assumption. Consequently the proof of Theorem \ref{noac} goes through
just as before on replacing $G$ with $H$ and $t,b$ with $t^i,b^i$.

The problem is that once $a\notin H$, we cannot assume $t^i$ and
$(tb)^i$ are conjugate in $H$. In particular consider the index 2
subgroup $N$ of $G$ given by the kernel of the homomorphism to $C_2$
sending $a$ to 1 and $b,t$ to 0. A presentation for $N$ is
\[\langle c,x,y,z|[x,y],[y,z],cxc^{-1}=z\rangle \]
where $x,y$ are the same elements as in the alternative presentation for
$G$ above and $c=s^2$. We therefore ask:\\
\hfill\\
{\bf Question}: Does the group $N$ act properly on a finite product of
locally finite trees preserving factors?\\ 
A no answer would have the
following strong consequences for RAAGs: 

\begin{co} \label{noacc}
If $N$ has no such action then any RAAG whose defining graph
has $\,\bullet
{\bf -}\bullet{\bf -}\bullet{\bf -}\bullet$ as an induced subgraph has
no proper action on a finite product of locally finite trees, with or
without permuting factors.
\end{co}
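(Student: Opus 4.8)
The plan is to argue the contrapositive: assume that some RAAG $G(\Gamma)$ whose defining graph $\Gamma$ contains the path $P_4$ on four vertices as an induced subgraph acts properly on a finite product of locally finite trees, possibly permuting factors, and manufacture from this a proper \emph{factor-preserving} action of $N$ on a finite product of locally finite trees, contradicting the hypothesis. First I would strip away the permutation of factors exactly as in Proposition \ref{odd}: choosing $H\le_f G(\Gamma)$ preserving factors, $H$ acts properly preserving factors, and there is $k\geq1$ with $v^k\in H$ for every standard generator $v$; since $\langle v^k\rangle$ is again a copy of $G(\Gamma)$, the whole RAAG $G(\Gamma)$ acts properly preserving factors. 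The four vertices of the induced $P_4$ then span a subgroup isomorphic to $G(P_4)=\langle u_0,u_1,u_2,u_3\mid [u_0,u_1],[u_1,u_2],[u_2,u_3]\rangle$, and restriction yields a proper factor-preserving action of $G(P_4)$ on a finite product of locally finite trees. Thus everything reduces to the implication: such an action of $G(P_4)$ forces one of $N$.

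The bridge is the internal structure of $N$. Eliminating $z=cxc^{-1}$, the kernel $M$ of the map $N\to\Z$ sending $c\mapsto1$ and $x,y\mapsto0$ is, on setting $u_{2j}=c^jxc^{-j}$ and $u_{2j+1}=c^jyc^{-j}$, precisely the RAAG $G(L)$ on the bi-infinite line $L=\cdots-u_{-1}-u_0-u_1-u_2-\cdots$, with $N=G(L)\rtimes\Z$ and $c$ acting as the shift $u_n\mapsto u_{n+2}$. Every generator of $M$ is a $c$-conjugate of $u_0$ or $u_1$, and $G(P_4)=\langle u_0,u_1,u_2,u_3\rangle$ is exactly the sub-RAAG on four consecutive vertices of $L$. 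So the programme is to promote the proper factor-preserving locally finite action of this $G(P_4)$ to a \emph{shift-equivariant} proper factor-preserving locally finite action of the whole fibre $G(L)$, and then to suspend by the $\langle c\rangle$-direction. Concretely, adjoining the line $\R$ (a locally finite tree) on which $c$ translates while $M=\ker(N\to\Z)$ fixes every point realises $N=M\rtimes\langle c\rangle$ on the enlarged product, provided $c$ is simultaneously realised as an isometry of the fibre product inducing the shift. Since $N$ is torsion free, Proposition \ref{hyel} then gives properness the moment every $u_n$ is hyperbolic on some fibre tree (an element $c^jm$ with $j\neq0$ is automatically hyperbolic on the new line).

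It is worth isolating where the genuine difficulty lies. In the general, non–locally–finite setting $N$ already acts properly preserving factors, simply by restricting the three–tree action of Proposition \ref{thrtr} to $N\le_f G$; the entire content of the corollary is therefore the \emph{local finiteness}. The naive fibre model, letting the even (respectively odd) generators all act on a single tree, fails precisely because $\langle u_{2j}\rangle$ is free of infinite rank and so has no free action on a locally finite tree. My proposal for the hard middle step is to use that only a fundamental domain $\{u_0,u_1,u_2,u_3\}$ of the shift must be resolved: realise $c$ as a hyperbolic automorphism of each fibre factor so that the local combinatorics is $c$-periodic, with the given $G(P_4)$-action supplying the tree data over one period and the shift propagating it equivariantly over the whole $\langle c\rangle$-orbit, in the same spirit as the equivariant gluing of rooted trees across an orbit in Theorem \ref{fthtre}. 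I expect the whole weight of the corollary to rest here, in keeping the branching bounded once the infinitely many shifted generators are folded into finitely many trees; granting that, the ellipticity bookkeeping (applying Lemma \ref{z} to each edge group $\langle u_n,u_{n+1}\rangle\cong\Z\times\Z$ and Proposition \ref{aclf} in the locally finite factors, exactly as in Theorem \ref{noac}) becomes routine and delivers the promised factor-preserving locally finite action of $N$, contradicting the hypothesis.
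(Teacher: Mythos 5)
Your two reductions at the start are fine and match the paper: passing to a factor-preserving finite-index subgroup and then back up to the whole RAAG via the $x_i^k$ trick of \cite{kk}, and restricting to the visible copy of $G(P_4)=\langle u_0,u_1,u_2,u_3\rangle$. But the central step of your argument --- promoting a proper factor-preserving locally finite action of $G(P_4)$ to a shift-equivariant one of the infinite-line RAAG $G(L)$ and then suspending by $\langle c\rangle$ --- is exactly the part you do not prove, and it is not a routine extension. The given action of $G(P_4)$ comes with no compatibility with the shift whatsoever, so there is no reason that $c$ can be realised as an automorphism of each fibre tree conjugating the $u_n$-action to the $u_{n+2}$-action; and even granting some equivariant propagation in the spirit of Theorem \ref{fthtre}, you would still have to keep every tree locally finite while folding infinitely many conjugates $c^jxc^{-j}$, $c^jyc^{-j}$ into finitely many factors. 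You acknowledge this is where ``the whole weight of the corollary'' rests, which is to say the proof is not there.

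The idea you are missing is that the implication goes in the opposite, much easier direction: $N$ \emph{embeds} in the RAAG $G(P_4)$, by Theorem 1.2 of Niblo--Wise \cite{nw}. Since $P_4$ is an induced subgraph of $\Gamma$, we get $N\leq G(P_4)\leq G(\Gamma)$, and a proper factor-preserving action on a finite product of locally finite trees restricts to any subgroup. So if $G(\Gamma)$ had such an action (after the finite-index and $x_i^k$ reductions you already carried out correctly), $N$ would too, contradicting the hypothesis. No construction of an action of $N$ is needed at all; the entire content beyond your two reductions is the subgroup embedding, not an equivariant extension.
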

\begin{proof} The group $N$ embeds in the RAAG with the above defining
graph by \cite{nw} Theorem 1.2, which 
in turn embeds in any RAAG $R$ having such an induced 
subgraph. Thus if this question has a negative answer then $R$ does not
act properly on a finite product of locally finite trees preserving factors.
But now we use the fact mentioned earlier that every finite index subgroup
of $R$ contains an isomorphic copy of $R$ to rule out actions that permute
factors.
\end{proof}

Given the results mentioned in the previous section, this would demonstrate
a big difference for most RAAGs in how they can act on a general product
of trees as opposed to how they do in the locally finite case.

\section{Surface groups and other hyperbolic 
groups}  
In this section we consider the intriguing question, raised in \cite{flss},
of whether a closed hyperbolic surface group $S_g$ (or indeed any non
elementary word hyperbolic group $H$ other than a virtually free group)
acts properly on a finite product of locally finite trees. In this section
we consider the case of groups $G$ which are torsion free but
which do not contain $\Z\times\Z$. It is not hard to show (for instance
by induction on $k$) that if
$G$ acts faithfully on the
product of $k$
trees $P=T_1\times\ldots\times T_k$ preserving factors
then there exists $i$ such that
the projection of $G$ to $Aut(T_i)$ is injective. Now suppose that the same
group $G$ acts properly (hence faithfully, as $G$ is torsion free) on the
product of $k$
trees $P=T_1\times\ldots\times T_k$ preserving factors. An easy adaptation
of the above result is that there exists $i$ such that
$G$ not only acts faithfully on $Aut(T_i)$ but also without a global
fixed point (as one can first remove the factors where $G$ acts with a
global fixed point and this will still preserve
properness of the action on the product).

Thus failure to have a faithful action on a single locally finite tree without 
a global fixed point is therefore an obstruction to $G$ acting properly on a 
finite product of locally finite trees. If $G$ is finitely generated then
the second part of Theorem \ref{fthtre} characterises such groups.
However this does not rule out hyperbolic surface groups: 
as $S_g$ is residually finite, we 
merely need to find some action (not necessarily faithful) of $S_g$ on a
locally finite tree without a global fixed point. We can stick to
$S_2=\langle a,b,c,d|[a,b][c,d]\rangle$ as $S_g$ is a finite index subgroup
for $g\geq 2$ and this can be achieved by taking a surjective homomorphism
from $S_2$ to a group known to have this property, such as $F_2$ or a free
product $C_m*C_n$.\\

However in this section we have not yet used the fact that here our trees
are all locally finite, so we do so now.

\begin{thm} \label{injcloc}
Suppose the torsion free group $G$ acts properly on the
product of $k$ locally finite
trees $P=T_1\times\ldots\times T_k$ preserving factors
but does not contain $\Z\times\Z$. Then we can restrict the action
of $G$ to a subproduct $Q=T_{i_1}\times \ldots \times T_{i_j}$
(for $i_1<\ldots <i_j$) of these trees such that the action of $G$ on $Q$
is still proper and such that every projection of $G$ to $Aut(T_{i_1}),
\ldots , Aut(T_{i_j})$ is injective and does not have a global fixed
point.
\end{thm}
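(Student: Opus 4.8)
The plan is to obtain $Q$ by discarding factors in two stages and then identifying each surviving projection as injective via a $\Z\times\Z$ obstruction. First I would dispose of the global fixed points: if $G$ fixes a vertex of some $T_i$ then every element is elliptic on $T_i$, so $T_i$ can never be the factor on which a nontrivial element is forced to be hyperbolic, and hence by Proposition \ref{hyel} deleting such a factor leaves the action proper. After removing all such factors we have a proper action on a subproduct in which, for each remaining index, the projection of $G$ has no global fixed point. Among the remaining indices I would then choose a subset $S$ that is minimal with respect to the property that $G$ still acts properly on $\prod_{i\in S}T_i$; this exists since there are finitely many factors. Minimality means that for each $i\in S$ the action on $\prod_{m\in S\setminus\{i\}}T_m$ fails to be proper, so by Proposition \ref{hyel} there is a nontrivial $g\in G$ which is hyperbolic on $T_i$ but elliptic on every other factor indexed by $S$. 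The no-global-fixed-point condition is inherited by $S$, so it only remains to show that each projection $\pi_i\colon G\to Aut(T_i)$ with $i\in S$ is injective.

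The key step is therefore: if some nontrivial $g$ is hyperbolic on $T_i$ and elliptic on all other factors of $S$, then $\ker\pi_i=\{1\}$. Suppose instead $1\neq h\in\ker\pi_i$; then $h$ is trivial, and so elliptic, on $T_i$, and by properness together with Proposition \ref{hyel} it is hyperbolic on some $T_m$ with $m\in S\setminus\{i\}$. I would now produce a single positive power $g^N$ whose commutator with $h$ is elliptic on every factor of $S$. On $T_i$ the commutator is trivial because $h$ is. On a factor $T_m$ on which $h$ is elliptic, both $g$ and $h$ are elliptic, so Proposition \ref{aclf} supplies an exponent $n_m$ with $g^{n_m}$ and $h$ sharing a fixed vertex; then whenever $n_m\mid N$ the elements $g^N$ and $h$ fix a common vertex and $[g^N,h]$ is elliptic there. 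Once $N$ is chosen compatibly for all such $m$, Proposition \ref{hyel} forces $[g^N,h]=1$; since $g^N$ is hyperbolic on $T_i$ while $h$ is trivial there, the commuting pair $g^N,h$ generates a copy of $\Z\times\Z$ (the subgroup cannot be cyclic, as a generator would have to be hyperbolic on $T_i$ yet have the $T_i$-trivial $h$ as a power), contradicting the hypothesis on $G$.

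The hard part is the remaining case of a factor $T_m$ (with $m\in S\setminus\{i\}$) on which $h$ is \emph{hyperbolic}, for there $h$ has no fixed vertex and Proposition \ref{aclf} does not apply; this is exactly where local finiteness must be used. Here $g$ is elliptic on $T_m$, fixing a vertex $v_m$, and because $T_m$ is locally finite the action of $\langle g\rangle$ on any ball $B(v_m,L)$ factors through a finite group, so some power $g^{q_m}$ fixes $B(v_m,L)$ pointwise. Taking $L$ larger than the distance from $v_m$ to the axis of $h$ plus twice the translation length of $h$, one checks by a short computation that for a suitable vertex $x$ on the axis of $h$ lying well inside the fixed ball (so that $h^{-1}(x)$ is fixed by $g^{-q_m}$ as well) one has $g^{q_m}hg^{-q_m}h^{-1}(x)=x$; hence $[g^{q_m},h]$, and therefore $[g^N,h]$ once $q_m\mid N$, is elliptic on $T_m$ after all. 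Folding these exponents $q_m$ into the common multiple $N$ chosen above completes the construction of the elliptic commutator, and with it the contradiction. The interplay of the two hypotheses is thus pinpointed: local finiteness is what upgrades the elliptic element $g$ to one fixing arbitrarily large balls, and it is precisely the absence of $\Z\times\Z$ in $G$ that this fixedness is made to violate, so every kept projection must be injective and $S$ yields the required subproduct $Q$.
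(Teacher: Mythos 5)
Your argument is correct and follows essentially the same route as the paper: prune the factors until each remaining tree carries an element hyperbolic there and elliptic on all the others, then use local finiteness (Proposition \ref{aclf}) to build a nontrivial commutator with a kernel element that is elliptic on every factor, and contradict properness via the absence of $\Z\times\Z$. The only real divergence is your ``hard case'' where $h$ is hyperbolic on $T_m$: the paper sidesteps it by applying Proposition \ref{aclf} to the pair $g$, $wgw^{-1}$ (both elliptic on $T_m$ since $g$ is), so that $wg^{N}w^{-1}g^{-N}$ is elliptic there with no case division, whereas your ball-stabilisation computation, though correct, does the same job by hand.
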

\begin{proof}
We first remove any tree where the
projection action of $G$ has a global fixed point, but having done that
we also remove in turn any tree $T_i$ where the action of $G$ on $T_i$ has the
following property: for any element $g\in G$ which is hyperbolic in this
action, there is some other tree left in the product (maybe depending on $g$) 
where $g$ also acts hyperbolically. On removing this tree $T_i$ from the
product, we have by Proposition \ref{hyel} that $G$ is still acting
properly and we continue until no such tree is left, whereupon we revert
to the original notation $T_1\times\ldots\times  T_k$ for the final product
of trees thus obtained.

Now suppose that (renumbering these trees if necessary) the action of
$G$ on $T_1$ is not faithful, so there exists an infinite order element
$w\in G$ acting trivially on $T_1$. But there will exist other elements of
$G$ acting hyperbolically on $T_1$ or else it would have been removed in the
first stage above. Furthermore there is some element $g\in G$ acting
hyperbolically on $T_1$ and such that the action of $G$ is elliptic on all
of $T_2,\ldots ,T_k$ (or else $T_1$ would have been removed at some point
during the second stage). This means that no positive power $g^n$ can
commute with $w$ in $G$, as otherwise $\langle g^n,w\rangle\cong\Z\times\Z$
unless $g^{rn}=w^s$ for some non zero $r,s$ but this would imply that the
action of $g$ on $T_1$ has finite order.

Hence on $T_2,\ldots ,T_k$ we have that $g$ is elliptic, as is $wgw^{-1}$.
So far we have not used local finiteness of our trees but now we observe that
for each $2\leq j\leq k$ there is $n_j$ such that $g^{n_j}$ and
$wg^{n_j}w^{-1}$ have a common fixed point, as in Proposition \ref{aclf}.
Consequently
$wg^{n_j}w^{-1}g^{-n_j}$ is also elliptic in the action on $T_j$. But this
argument also applies to any multiple of $n_j$, thus for $N=n_2\ldots n_k$
we have that $wg^Nw^{-1}g^{-N}$ is elliptic on $T_2,\ldots ,T_k$. Moreover
on $T_1$ this element acts as the identity because $w$ does, but we noted above
that it is not the identity element in $G$. So the action of $G$ on
$T_1\times\ldots\times T_k$ is not proper by Proposition \ref{hyel}.
\end{proof}

Note: this result fails even for two trees if one of them is not locally
finite, as shown in the example at the start of Section 5.

At first glance this does not seem to improve on our obstruction above.
However if such an action of $G$ exists as in the statement of
Theorem \ref{injcloc} then for any non identity $g\in G$, we can
pick a tree $T_i$ from the product on which $g$ acts hyperbolically.
Thus we can say: if a group $G$ is torsion free, does not contain $\Z\times\Z$
and acts properly on a finite product of locally
finite trees then for every non identity $g\in G$, there exists an
faithful action of $G$ on a locally finite tree $T_g$ on which $g$ acts
hyperbolically. Again the surface group $S_2$ is easily seen to avoid
this obstruction. 
For instance as it is a residually free group we can pick any non identity
$g\in S_2$ and take a homomorphism from $S_2$ to a free group $F_r$
which does not vanish on $g$. We can then let $F_r$
act on its Cayley graph, thus $S_2$ acts on this graph too with $g$ a
hyperbolic element. Whilst this is certainly not a faithful action of
the residually finite group $S_2$,
we can now apply the proof of Theorem \ref{fthtre} to convert it
into a faithful action of $S_2$ on a bigger tree, but one that is still
locally finite.

In order to get an obstruction which is non trivial as regards the group
$S_2$, we consider the core of a tree $T$
acted on by a group $G$. If $G$ contains a hyperbolic element then the
core  $C(T)$ of $T$ is the union of all axes of hyperbolic elements in $G$
and is the unique minimal subtree invariant under the action of $G$,
whereupon we say that $G$ acts minimally on $C(T)$. Certainly the
techniques in the proof of Theorem \ref{fthtre} where rooted subtrees
are added at vertices will not result in a minimal action.
Therefore we strengthen Theorem \ref{injcloc} so that it keeps the
same hypothesis and the same conclusion except we can add that $G$
acts minimally on each tree as well: 

\begin{co} \label{injcloc2}
Suppose the torsion free group $G$ acts properly on the
product of $k$ locally finite
trees $P=T_1\times\ldots\times T_k$ preserving factors
but does not contain $\Z\times\Z$. Then for each non identity element
$g\in G$ there is an action of $G$ on some locally finite tree $T$
(depending on $g$) which is minimal, faithful and such that $g$ acts
hyperbolically.
\end{co}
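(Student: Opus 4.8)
The plan is to derive this strengthening of Theorem \ref{injcloc} by passing to cores, which supplies minimality automatically, and then to recover faithfulness at the end by invoking Theorem \ref{injcloc} once more on the resulting product of cores; the only delicate point is to check that passing to cores does not destroy properness. As a preliminary I would discard any factor $T_i$ on which $G$ has a global fixed point: every element of $G$ is then elliptic on $T_i$, so by Proposition \ref{hyel} this factor never accounts for properness and may be deleted. After this step each remaining factor carries a hyperbolic element, so its core $C(T_i)$ is a well-defined, non-trivial, locally finite, $G$-invariant subtree on which the $G$-action is by definition minimal.

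The key observation is that the elliptic/hyperbolic type of every element of $G$ is unchanged on passing from $T_i$ to $C(T_i)$: if $h$ is hyperbolic on $T_i$ then its axis already lies in the core, while an axis inside $C(T_i) \subseteq T_i$ is equally an axis in $T_i$, so $h$ is hyperbolic on $T_i$ if and only if it is hyperbolic on $C(T_i)$. Consequently, by Proposition \ref{hyel}, the action of $G$ on the product of cores $\prod_i C(T_i)$ is again proper, and it preserves factors since each core is canonically $G$-invariant. Thus $G$ acting on $\prod_i C(T_i)$ satisfies all the hypotheses of Theorem \ref{injcloc}.

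I would then apply Theorem \ref{injcloc} to this core action. This produces a subproduct of cores on which the action remains proper and every projection $G \to Aut(C(T_{i_\ell}))$ is injective with no global fixed point. Since Theorem \ref{injcloc} only discards whole factors and never alters the surviving trees, each remaining factor is still a core, so the $G$-action on it is still minimal; minimality of a single factor is intrinsic and is unaffected by dropping the others. Finally, given any non-identity $g \in G$, properness of this subproduct action together with Proposition \ref{hyel} forces $g$ to act hyperbolically on at least one surviving core $C(T_{i_\ell})$, and this locally finite tree carries a minimal, faithful $G$-action with $g$ hyperbolic, as required.

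The step I expect to be the real obstacle is faithfulness, not minimality. One cannot simply take the cores of the trees already provided by Theorem \ref{injcloc}, because restricting a faithful action to its core may create a kernel: an element can fix the core pointwise while permuting the branches that hang off it. The remedy is to reverse the order of operations, taking cores first and only then re-running Theorem \ref{injcloc}; this is legitimate precisely because the type-preservation observation guarantees that properness is inherited by the product of cores, so Theorem \ref{injcloc} can be invoked afresh to reinstate injectivity of the projections on the now-minimal factors.
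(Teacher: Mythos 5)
Your proposal is correct and follows essentially the same route as the paper: pass to cores first (properness survives because restriction to the core preserves the elliptic/hyperbolic type of every element, so Proposition \ref{hyel} still applies), then run the faithfulness argument of Theorem \ref{injcloc} on the product of cores; your observation that the order matters --- taking cores of the trees output by Theorem \ref{injcloc} could introduce a kernel --- is exactly the point around which the paper's proof is arranged. The only cosmetic difference is that the paper interleaves the core-taking between the two halves of the proof of Theorem \ref{injcloc}, so that the second (``redundancy'') removal stage has already guaranteed that each surviving factor contains a hyperbolic element before its core is formed, whereas you take cores immediately after discarding the global-fixed-point factors and then reapply Theorem \ref{injcloc} wholesale.
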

\begin{proof}
We start by proceeding as in the first paragraph of the proof of Theorem
\ref{injcloc} where various trees are removed but the resulting action
of $G$ on the product is still proper. At this point though,
we now replace each of
the remaining trees $T_{i_j}$ by its core $C(T_{i_j})$, which is also
locally finite and with $G$ acting minimally on it. The action
of $G$ on this new product is proper by Proposition \ref{hyel}, because
the process of restricting an action to the core does not affect whether
an element is hyperbolic or elliptic. Now we can run through the rest
of the proof of Theorem \ref{injcloc} to get that the projection action
of $G$ on each of the remaining trees is faithful, and so given any
non identity $g\in G$, we can take the action of $G$ on one of the
factor trees where $g$ acts hyperbolically.
\end{proof}

In the next section we will show that this is true for hyperbolic surface
groups. However
we finish this section by noting that things are very different for groups 
containing 
$\Z\times\Z$. Taking $G=F_2\times\Z$ with $\Z=\langle z\rangle$, we have that
$G$ acts properly (and even cocompactly) on a product of two locally
finite trees and is also residually free. But there is no faithful
minimal action of $G$ on any tree whatsoever. This is because if $G$ acts
on a tree with $z$ a hyperbolic element then any $g\in G$ sends the axis
of $z$ to itself, so the core must be the axis of $z$ and then the 
minimal action
is not faithful. If however $z$ acts elliptically then for all hyperbolic
elements $g$, we have that $z$ sends the axis of $g$ to itself and
preserves the direction (else it conjugates $g$ to its inverse), so
$z$ must fix this axis pointwise and hence fix the whole core pointwise.

\section[Fields of positive characteristic]{Fields of positive 
characteristic and the Bruhat - Tits tree}

Question 1 of \cite{flss} asks: let $S_g$ be a closed surface group of
genus $g\geq 2$. Is there a discrete and faithful representation of
$S_g$ into $Aut(Y)$ for $Y$ a finite product of bounded valence
trees? 
As $S_g$ is torsion free and all trees considered here are
locally finite, Proposition \ref{eqpr} tells us that a discrete and
faithful representation of $S_g$ is the same as a proper action, 
whereas Lemma \ref{equiv} says it does not matter whether we use
locally finite or bounded valance trees.
Thus the question is equivalent to asking whether $S_g$ acts properly on a 
finite product of locally finite trees, and hence equivalent to whether
$S_2$ does, by induced actions.

It is pointed out in Theorem 3 of \cite{flss} that if we can find a
faithful representation of a finitely generated group $G$ into
$PGL(2,K)$ for $K$ a global field of characteristic $p>0$, say
$\F_p(x)$, then $G$ acts properly on a finite product of locally
finite trees. This is because for each valuation $v$ of $K$, the group
$PGL(2,K)$ will act faithfully on its Bruhat - Tits tree, for instance
the regular tree $T_{p+1}$ when $K=\F_p(x)$. Although this will not
be a proper action in general, the finite generation of $G$ means we
can take finitely many valuations on $K$ to get a proper action on the product
of these trees. (If $K$ is a global field of characteristic zero, namely
a number field, the above still works except that those elements of $G$
whose trace is an algebraic integer will be elliptic in every action.)

In the case where $G=S_g$, the existence of such a representation
seems a hard question, but on extending the field $\F_p(x)$ by one
transcendental element we can ask whether we have an embedding of $S_g$
using this new field. Indeed Theorems 4 and 5 in \cite{flss} 
show that for every prime $p$ at least 5, there is a faithful embedding
of $S_2$ in $PGL(2,K)$ where $K$ is a finite extension of $\F_p(x,y)$
and for any characteristic $p$ field $k$ of transcendence degree at least
2, there is a faithful embedding of $S_2$ in $PGL(n,k)$ for some $n$.
Here we will improve on these results by giving a completely explicit
faithful embedding of $S_2$ in $SL(2,K)$, and hence in $PSL(2,K)$
for $K=\F_p(x,y)$ where $p$ is any prime. To obtain faithfulness
of the representation, we use the following result of Shalen.

\begin{prop} (\cite{sha} Proposition 1.3) \label{shl}
Suppose $G_1*_HG_2$ is a free product with abelian amalgamated
subgroup $H$ and suppose we have faithful representations
$\rho_i:G_i\injects SL(2,\F)$ and
$i=1,2$ over any field $\F$ such that\\
(a) $\rho_1$ and $\rho_2$ agree on $H$,\\
(b) $\rho_1(h)=\rho_2(h)$ is diagonal for all $h\in H$ and\\
(c) For all $g\in G_1\setminus H$ we have that the bottom left hand entry
of $\rho_1(g)$ is non zero, and similarly for the top right hand entry
of $\rho_2(g)$ for all $g\in G_2\setminus H$.

Then $G_1*_HG_2$ embeds in $SL(2,\F(y))$ where $y$ is a transcendental
element over $\F$. 
\end{prop}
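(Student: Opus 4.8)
The plan is to realise the amalgam inside $SL(2,\F(y))$ by conjugating one of the two given representations by the diagonal matrix $D=\mathrm{diag}(y,y^{-1})$, and then to detect reduced words by watching the powers of $y$ that accumulate in the matrix entries. Concretely, I would set $\sigma_1=\rho_1$ on $G_1$ and $\sigma_2(g)=D\rho_2(g)D^{-1}$ on $G_2$. Because $\rho_2(h)$ is diagonal for every $h\in H$ by hypothesis (b), conjugation by the diagonal matrix $D$ fixes it, so $\sigma_2(h)=\rho_2(h)=\rho_1(h)=\sigma_1(h)$ using (a); the two maps therefore agree on $H$ and the universal property of $G_1*_HG_2$ yields a homomorphism $\sigma\colon G_1*_HG_2\to SL(2,\F(y))$. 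Injectivity of $\sigma$ on each factor (and hence on $H$) is immediate since $\rho_1,\rho_2$ are faithful and conjugation is injective, so the whole content is to show $\sigma(w)\neq I$ for a reduced word $w=u_1\cdots u_n$ of length $n\ge2$, the $u_i$ lying alternately in $G_1\setminus H$ and $G_2\setminus H$.

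Here the effect of the conjugation becomes visible: a factor from $G_1$ contributes a constant matrix $\left(\begin{smallmatrix}a&b\\c&d\end{smallmatrix}\right)$ with $c\neq0$ by (c), while a factor from $G_2$ contributes $D\rho_2(u_i)D^{-1}=\left(\begin{smallmatrix}\alpha&y^2\beta\\y^{-2}\gamma&\delta\end{smallmatrix}\right)$, whose only entry of positive $y$-degree is the top right one $y^2\beta$ with $\beta\neq0$, again by (c). I would then argue on the highest power of $y$ occurring in $\sigma(w)$. Since $G_1$-factors have entries of degree $0$ and $G_2$-factors have entries of degree at most $2$, the degree of any entry of the product $\sigma(w)=\prod_i\sigma(u_i)$ is at most $2m$, where $m\ge1$ is the number of $G_2$-syllables. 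The crucial observation is that this maximal degree $2m$ is attained by a single, forced routing through the matrix product: one must select the degree-$2$ entry $(1,2)$ from each $G_2$-factor, which forces the running index to return from $2$ to $1$ across each intervening $G_1$-factor, i.e.\ through its bottom left entry $c\neq0$.

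Tracking this, I expect the degree-$2m$ part of $\sigma(w)$ to be a nonzero rank one matrix: the leading part of each $G_2$-factor is $\beta\,e_1e_2^{\mathsf T}$, and for two such separated by a single $G_1$-matrix $A$ one computes $(\beta e_1e_2^{\mathsf T})A(\beta'e_1e_2^{\mathsf T})=\beta\beta'(A)_{21}\,e_1e_2^{\mathsf T}$, which stays nonzero precisely because $(A)_{21}=c\neq0$. Propagating this through the word collapses the product of all leading parts to a nonzero scalar multiple of $e_1e_2^{\mathsf T}$ (with any boundary $G_1$-factors multiplied on the outside, still nonzero by invertibility), giving a genuine nonzero coefficient $\prod_j\beta_j\prod c$ in some entry of degree $2m>0$. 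As the identity matrix has all entries of degree $0$, this forces $\sigma(w)\neq I$ and completes the injectivity argument.

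The step I expect to be the main obstacle is exactly this no-cancellation claim, namely that the naive maximal degree $2m$ is actually achieved and not wiped out by interference from lower-degree routings. What makes it go through is that within a fixed matrix entry the degree-$2m$ term is a single monomial (the routing is forced), so it cannot cancel against anything; conditions (a)--(c) are used precisely to keep this monomial nonzero ((b) to define $\sigma$, and the nonvanishing of $c$ and $\beta$ from (c) to keep the connecting coefficients nonzero). I would record the normal form theorem for amalgamated products (that such reduced words are nontrivial) as the one external input, and note that the same computation admits a conceptual reading as ping--pong on the Bruhat--Tits tree of $\F(y)$ at the place at infinity, with $D$ a hyperbolic translation moving the vertex fixed by $\sigma_1(G_1)$ onto the one fixed by $\sigma_2(G_2)$.
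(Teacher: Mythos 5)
Your proposal is correct and is essentially the paper's own argument (which follows Shalen): conjugate one factor by a diagonal matrix in $y$, expand a reduced word, and note that the top $y$-degree term in some entry is a single forced monomial whose coefficient is a product of the off-diagonal entries that hypothesis (c) keeps nonzero. The only differences are cosmetic --- you conjugate $G_2$ by $\mathrm{diag}(y,y^{-1})$ and keep boundary syllables, while the paper conjugates $G_1$ by $\mathrm{diag}(1,y)$ and first passes to a cyclically reduced normal form.
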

\begin{proof} In \cite{sha} the result was stated just for the field
$\C$ but for arbitrary dimension $d$. However the proof does work for
arbitrary fields $\F$ and general dimensions. Here we just give a summary
in the $d=2$ case, including one point in the proof which will be needed
later.

Define the representation $\rho:G_1*_HG_2\rightarrow SL(2,\F(y))$ as
equal to $\rho_2$ on $G_2$ but on $G_1$ we replace $\rho_1$ by the
conjugate representation $T\rho_1T^{-1}$ where $T$ is the diagonal
matrix $\mbox{diag}(1,y)$, and then extend to all of
$G_1*_HG_2$. Now it can be shown straightforwardly 
that any element not conjugate into $G_1\cup G_2$ is
conjugate in $G_1*_HG_2$ to something with normal form
\[g=\gamma_1\delta_1\ldots \gamma_l\delta_l\]
where all $\gamma_i\in G_1\setminus H$ and all $\delta_i\in G_2\setminus H$.
Induction on $l$ then yields that the entries of $g$ are Laurent polynomials
in $y^{\pm 1}$ with coefficients in $\F$ and with the bottom right 
hand entry of $g$ equal to $\alpha y^l+\ldots$ 
where all other terms are of
strictly lower degree in $y$. But it can be checked
that $\alpha$ is actually just a product of these respective
bottom left and top right entries, thus is a non zero element of $\F$
so this bottom right hand entry does not equal 1 and
$g$ is not the identity matrix.

Moreover the top left hand entry of $g$ is equal to a Laurent polynomial of
the form $\beta y^{l-1}$ plus lower order terms, for $\beta\in\F$ (although
unlike $\alpha$ above, $\beta$ could be zero). This means that the trace of
$g$ is also of the form $\alpha y^l$ plus lower order terms. 
\end{proof}   
  
We can use this result as follows:

\begin{co} \label{shas}
Let $\F$ be the field $\F_p(x)$. Suppose we have a pair of
2 by 2 matrices $A,B\in SL(2,\F)$ such that $\langle A,B\rangle$ is a
free group of rank 2 and $ABA^{-1}B^{-1}$ is a diagonal matrix. Then on 
introducing a transcendental element $y$ and setting $D=TAT^{-1}, C=TBT^{-1}$
for $T=\mbox{diag}(1,y)$, we have that $\langle A,B,C,D\rangle$ is
a faithful representation of the surface group $S_2$ in $SL(2,\F_p(x,y))$.
\end{co}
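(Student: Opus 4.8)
The plan is to realise $S_2$ as an amalgamated free product to which Proposition \ref{shl} applies. Writing $S_2=\langle a,b,c,d\mid [a,b][c,d]\rangle$, the single relation reads $[a,b]=[c,d]^{-1}$, so $S_2=G_1*_HG_2$ where $G_1=\langle a,b\rangle$ and $G_2=\langle c,d\rangle$ are free of rank $2$ and $H$ is the infinite cyclic (hence abelian) subgroup generated by $[a,b]$, identified with $\langle [c,d]^{-1}\rangle$ inside $G_2$. I would take $\rho_1:G_1\to SL(2,\F)$ to be $a\mapsto A$, $b\mapsto B$, which is faithful by hypothesis, and $\rho_2:G_2\to SL(2,\F)$ to be $c\mapsto B$, $d\mapsto A$, which is faithful for the same reason since $\langle B,A\rangle=\langle A,B\rangle$. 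Then $\rho_1([a,b])=[A,B]$ and $\rho_2([c,d]^{-1})=([B,A])^{-1}=[A,B]$ coincide, and this common value is diagonal by hypothesis, so conditions (a) and (b) of Proposition \ref{shl} hold with $\Delta:=[A,B]=\mbox{diag}(\lambda,\lambda^{-1})$.

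The crux is condition (c): I must show that for every $g\in G_1\setminus H$ both off-diagonal entries of $\rho_1(g)$ are non-zero, and likewise for $G_2$. I would argue that the set $S$ of $g\in G_1$ with $\rho_1(g)$ upper triangular (equivalently fixing $\infty\in\mathbb P^1$) is a subgroup mapping into the Borel subgroup of upper triangular matrices, which is metabelian; as $\rho_1$ is faithful, $S$ is a solvable subgroup of the free group $G_1$, hence infinite cyclic, say $S=\langle u\rangle$. Since $\Delta$ is diagonal it fixes $\infty$, so $[a,b]\in S$ and thus $[a,b]=u^m$ for some $m$. Here I would invoke the standard fact that $[a,b]$ is not a proper power in $F_2$ (its maximal cyclic overgroup is $\langle[a,b]\rangle$ itself), forcing $m=\pm1$ and hence $S=H$. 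The identical argument with "lower triangular" (fixing $0$) shows the other off-diagonal entry vanishes only on $H$, and running both arguments in $G_2$ completes condition (c). (One checks en route that $\Delta\neq\pm I$: faithfulness of $\rho_1$ rules out $[A,B]=I$, while $[A,B]=-I$ would give $\rho_1([a,b]^2)=I$, again contradicting faithfulness; strictly this is not needed for the cyclicity argument but is reassuring.)

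With (a)--(c) in hand I would apply Proposition \ref{shl}, conjugating the factor $G_2$ rather than $G_1$ by $T=\mbox{diag}(1,y)$ --- the construction is symmetric in the two factors, and since I have shown \emph{both} off-diagonal entries are non-zero off $H$, condition (c) holds in either orientation. This yields a faithful representation of $S_2=G_1*_HG_2$ into $SL(2,\F(y))=SL(2,\F_p(x,y))$ in which $a\mapsto A$, $b\mapsto B$, $c\mapsto TBT^{-1}=C$ and $d\mapsto TAT^{-1}=D$, which is exactly the asserted embedding $\langle A,B,C,D\rangle$. I expect the main obstacle to be condition (c); everything else is bookkeeping, and the one genuinely non-formal input is that the commutator $[a,b]$ is not a proper power in the free group, which is what upgrades "the point stabiliser is cyclic" to "the point stabiliser equals $H$".
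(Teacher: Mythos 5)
Your proposal is correct and follows essentially the same route as the paper: the same amalgam decomposition $S_2=\langle a,b\rangle *_{\langle[a,b]\rangle}\langle c,d\rangle$, the same choice of $\rho_1,\rho_2$, and the same verification of condition (c) via the observation that a triangular image forces a soluble (hence cyclic) subgroup of $F_2$ containing $[A,B]$, which must equal $H$ since $[a,b]$ is not a proper power. Your version merely spells out the "not a proper power" step, which the paper leaves implicit, and conjugates the second factor instead of the first, which is harmless by the symmetry you note.
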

\begin{proof}
This is the case in the above proposition where $G_1=\langle a,b\rangle$ and 
$G_2=\langle d,c\rangle$ are both copies of the free group $F_2$, 
where $a=d$ and $b=c$. On setting $H$ to be the cyclic subgroup
generated by $aba^{-1}b^{-1}=dcd^{-1}c^{-1}$, we see that $G_1*_H G_2$ is 
$S_2$. We let $\rho_1$ send $a,b$ to $A,B$ and $\rho_2$ send $d,c$ to $A,B$,
thus (a) and (b) are satisfied. But (c) is satisfied too, because
an element $X$ of $SL(2,\F)$ with an off diagonal entry zero would have the
property that $\langle X,ABA^{-1}B^{-1}\rangle$ is a soluble group, which
which cannot happen in the free group on $A,B$ if
$X\notin \langle ABA^{-1}B^{-1}\rangle$.
\end{proof}

We now look for matrices of the required form.
\begin{thm} \label{matfrm}
Let $\F$ be any infinite field and let $c,h,d,\delta$ be non zero
elements of $\F$. Set $X=1-d\delta h+d^2 h^2$, $Y=\delta^2-d \delta h+h^2$
and suppose that $X$ and $Y$ are also non zero. Then on defining
\[
A=\sma{cc}\frac{dY}{X}&\frac{d\delta h(1-d^2)+d^2\delta^2-1}{cX}\\
c&d\fma
\mbox{ and }B=\sma{cc}\frac{\delta X}{Y}&
\frac{d\delta(1-\delta^2)+h(d^2\delta^2-1)}{cY}\\
ch&\delta\fma,\]
we have that $A,B\in SL(2,\F)$ with
$\mbox{tr}(A)=d(X+Y)/X$, $\mbox{tr}(B)=\delta (X+Y)/Y$
and $\mbox{tr}(AB)=(d\delta(1+h^2)-h)(X+Y)/(XY)$. We also have
\[AB=\sma{cc}\frac{d\delta(1+h^2)-h}{X}&
\frac{dh(\delta^2-1)+\delta(d^2-1)}{cX}\\
\frac{c(\delta+dh^3)}{Y}&\frac{d\delta(1+h^2)-h}{Y}
\fma,
BA=\sma{cc}\frac{d\delta(1+h^2)-h}{Y}&
\frac{dh(\delta^2-1)+\delta(d^2-1)}{cY}\\
\frac{c(\delta+dh^3)}{X}&\frac{d\delta(1+h^2)-h}{X}\fma\]
\[\mbox{ and }ABA^{-1}B^{-1}=\sma{ccc}\frac{Y}{X}&0\\0&\frac{X}{Y}\fma.\]
\end{thm}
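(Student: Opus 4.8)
The statement is a collection of explicit identities among $2\times 2$ matrices whose entries are rational in the field elements $c,h,d,\delta$, so the plan is a direct verification, organised so as to minimise the algebra. First I would record the two easy facts: since $A$ and $B$ are written with explicit diagonal entries, $\operatorname{tr}(A)=dY/X+d=d(X+Y)/X$ and $\operatorname{tr}(B)=\delta X/Y+\delta=\delta(X+Y)/Y$ are immediate. For membership in $SL(2,\F)$ I would compute the determinants: clearing the common denominator $X$ in $\det A$ reduces the claim $\det A=1$ to the polynomial identity $d^2Y-d\delta h(1-d^2)-d^2\delta^2+1=X$, which after expanding $d^2Y$ collapses to $1-d\delta h+d^2h^2$, i.e.\ exactly $X$; in the same way $\det B=1$ reduces to $\delta^2X-dh\delta(1-\delta^2)-h^2(d^2\delta^2-1)=Y$, which collapses to $Y$. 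These two reductions are the real content of the $SL(2)$ claim, the trace formulas being free.

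Next I would verify the stated forms of the products. For $AB$ I would compute the four entries one at a time; each becomes, after clearing denominators and substituting the definitions of $X$ and $Y$, a short polynomial identity (for instance $(AB)_{11}$ reduces to $d\delta X+h[d\delta h(1-d^2)+d^2\delta^2-1]=d\delta(1+h^2)-h$, where the $d^2\delta^2 h$ and $d^3\delta h^2$ terms cancel). The matrix $BA$ is obtained by an entirely analogous four-entry computation; the key observation I would flag is that $BA$ is exactly $AB$ with $X$ and $Y$ interchanged, the diagonal numerator $P:=d\delta(1+h^2)-h$, the upper-right numerator $Q:=dh(\delta^2-1)+\delta(d^2-1)$ and the lower-left numerator $cS:=c(\delta+dh^3)$ being common to both. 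Adding the two diagonal entries of $AB$ then gives $\operatorname{tr}(AB)=P(1/X+1/Y)=P(X+Y)/(XY)$, as claimed.

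For the commutator I would avoid the brute-force route of inverting $A$ and $B$ and multiplying four matrices, and instead write $ABA^{-1}B^{-1}=(AB)(BA)^{-1}$. Since $BA\in SL(2,\F)$ its inverse is its adjugate, and using the $X\leftrightarrow Y$ description of $BA$ the product $(AB)(BA)^{-1}$ is transparent: the $(1,2)$ and $(2,1)$ entries cancel identically (each is a difference of two equal terms, $\pm PQ/(cXY)$ and $\pm cSP/(XY)$ respectively), while the $(1,1)$ and $(2,2)$ entries come out as $(P^2-QS)/X^2$ and $(P^2-QS)/Y^2$. The final simplification uses the single identity $P^2-QS=XY$; but this is nothing other than $\det(AB)=\det A\,\det B=1$ read off from the explicit entries of $AB$, so no extra expansion is needed. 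Hence the diagonal entries are $Y/X$ and $X/Y$ and the commutator has the asserted form.

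The proof presents no conceptual obstacle: every step is a polynomial identity in $c,h,d,\delta$, verifiable by expansion. The only place where care pays off is the commutator, where recognising that $BA$ is $AB$ with $X,Y$ swapped and that $P^2-QS=XY$ is merely $\det(AB)=1$ turns an unpleasant product of four matrices into a two-line cancellation; the remaining labour is routine bookkeeping, keeping track of the denominators $X,Y$ and ensuring the many cubic terms in $d,\delta,h$ cancel correctly.
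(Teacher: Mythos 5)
Your proof is correct: I checked each of the claimed reductions (the two determinant identities, the four entries of $AB$, the $X\leftrightarrow Y$ symmetry giving $BA$, and the commutator computation via $(AB)(BA)^{-1}$ with $P^2-QS=XY$ coming from $\det(AB)=1$) and they all hold. The paper's own ``proof'' of this theorem is simply the remark that the identities can be established by direct calculation, preferably by computer, with the Appendix devoted not to verification but to the reverse problem of \emph{deriving} the stated form of $A$ and $B$ from the requirement that $ABA^{-1}B^{-1}$ be diagonal. So you are doing exactly what the paper invokes but declines to write out, and your organisation is what makes it feasible by hand: observing that $BA$ is $AB$ with $X$ and $Y$ interchanged, and that the only nontrivial identity needed for the commutator, $P^2-QS=XY$, is just $\det(AB)=1$, collapses what would otherwise be a product of four matrices into a two-line cancellation. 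The one thing your write-up gives up relative to the Appendix is any explanation of where these matrices come from, but that is not part of the statement being proved.
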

\begin{proof}
Of course this can be established by direct calculation (preferably by
computational means). However we do indicate the derivation of our
expression for the above matrices but, as it will not be needed by those
willing to take the result on trust, we relegate it to the Appendix.
\end{proof}

It remains to be seen that we can find matrices $A,B$ of the above form
which generate a free group of rank 2 when $\F=\F_p(x)$.
To do this, we can take the 
discrete valuation 
$v$ on $\F_p(x)$ given by minus the degree, so 
$(a_m x^m+\ldots +a_0)/(b_n x^n+\ldots +b_0)$ has valuation $n-m$ if
$a_m,b_n\neq 0$. 
Recall that a discrete valuation $v:\F\rightarrow
\Z\cup\{\infty\}$ on a field $\F$ satisfies:\\
(1) $v(x)=\infty$ if and only if $x=0$\\
(2) $v(xy)=v(x)+v(y)$\\
(3) $v(x+y)\geq \mbox{min}(v(x),v(y))$. Moreover if $v(x)\neq v(y)$
then this is an equality.\\
The set of elements ${\cal O}_v=\{x\in \F:v(x)\geq 0\}$ forms a
subring of $\F$, called the valuation ring, which is a principal ideal
domain and an element $\pi$ with $v(\pi)=1$ is called a uniformiser.

We can then take the metric completion $k$ of
$\F_p(x)$ to obtain a local field, with the same valuation
and which also acts on its Bass - Serre tree
$T_{p+1}$. (The above is evaluation at zero: perhaps the more common
valuation used is that at infinity, giving $k=\F_p((x))$ but this is
obtained anyway by substituting $1/x$ for $x$ in everything below.)  
The results of \cite{con} then tell us when a pair of matrices
$A,B\in SL(2,k)$ generate a free and discrete group. A necessary condition
is that the valuation $v$ of the traces $A,B$ and $AB$ are all negative, as
otherwise these will act on $T_{p+1}$ as elliptic elements. Otherwise the
translation length of a matrix $M\in SL(2,k)$ is $-2v(\mbox{tr}\,M)$.
Now if we can find matrices $A,B$ in the above form where
the valuation of the three traces $\mbox{tr}(A),\mbox{tr}(B),\mbox{tr}(AB)$
are all equal and negative (say $-1$) then we are in Case 2(i) of
Proposition 3.5 in \cite{con}, with this satisfying the hypothesis in
Corollary 3.6 which shows that $\langle A,B\rangle$ is free of rank 2 and
discrete.
\begin{thm} \label{mat}
If $p$ is any odd prime then the following matrices 
$A,B,C,D$ in 
$SL(2,\F_p(x,y))$ generate a faithful representation of the genus 2 surface
group, where we have $ABA^{-1}B^{-1}=DCD^{-1}C^{-1}$.
\begin{eqnarray*}
A=\sma{cc} \frac{1-2x^2-2x^3}{x(x-1)}&\frac{-1+2x^2+x^3+x^4}{x^3(x-1)}\\
1&1/x^2\fma &,&
B=\sma{cc} \frac{x^2-1}{x-2x^3-2x^4}&
\frac{1+2x-x^2-3x^3-2x^4}{x^2(2x^3+2x^2-1)}\\x&1+x\fma,\\
D=\sma{cc} \frac{1-2x^2-2x^3}{x(x-1)}&\frac{-1+2x^2+x^3+x^4}{yx^3(x-1)}\\
y&1/x^2\fma &,&
C=\sma{cc} \frac{x^2-1}{x-2x^3-2x^4}&
\frac{1+2x-x^2-3x^3-2x^4}{yx^2(2x^3+2x^2-1)}\\yx&1+x\fma.
\end{eqnarray*}
\end{thm}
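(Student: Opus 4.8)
The plan is to recognise the displayed matrices $A,B$ as a single specialisation of the family produced in Theorem \ref{matfrm}, to prove that this specialisation generates a free group of rank $2$ using the valuation criterion imported from \cite{con}, and then to feed the resulting pair into Corollary \ref{shas}, whose output is exactly a faithful copy of $S_2$ with $C=TBT^{-1}$ and $D=TAT^{-1}$ for $T=\mbox{diag}(1,y)$. Since Theorem \ref{matfrm} already guarantees that $ABA^{-1}B^{-1}$ is the diagonal matrix $\mbox{diag}(Y/X,X/Y)$, the hypotheses of Corollary \ref{shas} collapse to the single assertion that $\langle A,B\rangle$ is free of rank $2$.

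First I would pin down the parameter values. Taking $\F=\F_p(x)$ and setting $c=1$, $h=x$, $d=1/x^2$ and $\delta=1+x$ in Theorem \ref{matfrm}, a direct substitution gives $X=1-d\delta h+d^2h^2=(1-x)/x^2$ and $Y=\delta^2-d\delta h+h^2=(2x^3+2x^2-1)/x$. Both $X$ and $Y$ are nonzero rational functions for every odd $p$, and this is precisely where oddness is used: in characteristic $2$ the coefficients $2$ collapse and $Y$ degenerates to a constant, which is why the case $p=2$ is handled separately in Corollary \ref{mat2}. Checking that the entries produced by the formulas of Theorem \ref{matfrm} coincide with the displayed $A$ and $B$, and that $D,C$ are the claimed conjugates $TAT^{-1},TBT^{-1}$, is then a routine comparison of rational functions.

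The substantive step is freeness. Following the discussion after Theorem \ref{matfrm}, I would equip $\F_p(x)$ with the discrete valuation $v$ equal to minus the degree and compute the three trace valuations from the closed forms $\mbox{tr}(A)=d(X+Y)/X$, $\mbox{tr}(B)=\delta(X+Y)/Y$ and $\mbox{tr}(AB)=(d\delta(1+h^2)-h)(X+Y)/(XY)$. Using $v(X)=1$, $v(Y)=-2$, $v(X+Y)=-2$ together with $v(d)=2$, $v(\delta)=-1$ and $v(d\delta(1+h^2)-h)=0$, each of the three valuations works out to $-1$. Thus the three traces share one equal negative valuation, which places us in Case 2(i) of Proposition 3.5 of \cite{con}, and Corollary 3.6 there certifies that $\langle A,B\rangle$ is discrete and free of rank $2$ inside $SL(2,k)$ for the completion $k$; as this is a statement about the abstract group generated, $\langle A,B\rangle$ is free of rank $2$ in $SL(2,\F_p(x))$ as well.

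With freeness established and the commutator already diagonal, Corollary \ref{shas} applies verbatim and delivers the faithful embedding of $S_2=\langle a,b\rangle *_{\langle [a,b]\rangle}\langle d,c\rangle$ into $SL(2,\F_p(x,y))$ sending $a,b,d,c$ to $A,B,D,C$, with $ABA^{-1}B^{-1}=DCD^{-1}C^{-1}$ holding by construction. The only point demanding genuine care is the valuation bookkeeping of the previous paragraph: the conclusion of \cite{con} is sensitive to the three traces sharing a common negative valuation, so the degree computations must be carried out exactly, and it is precisely the surviving leading coefficients $2$ that guarantee the required degrees persist after reduction mod $p$ for every odd prime.
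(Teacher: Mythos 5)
Your proposal is correct and follows essentially the same route as the paper: specialise Theorem \ref{matfrm} at $c=1$, $d=1/x^2$, $\delta=x+1$, $h=x$, verify that $\mbox{tr}(A)$, $\mbox{tr}(B)$, $\mbox{tr}(AB)$ all have valuation $-1$ so that Case 2(i) of Proposition 3.5 and Corollary 3.6 of \cite{con} give freeness, and then invoke Corollary \ref{shas}. One small imprecision: in characteristic $2$ the functions $X=(1-x)/x^2$ and $Y=(2x^3+2x^2-1)/x$ remain nonzero (the latter becomes $1/x$); the real failure is that $v(Y)$ jumps from $-2$ to $1$, destroying the equal-trace-valuation computation, which is the point your final sentence correctly identifies.
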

\begin{proof}
The form of $A$ and $B$ come from the previous
result with ($c=1$ without loss of generality and)
$d=1/x^2$, $\delta=x+1$ and $h=x$, whereupon
we do find (for $p\neq 2$) that the traces of $A,B,AB$ all have valuation
$-1$. Hence $\langle A,B\rangle$ is a rank 2 free group in the required
form for the application of Corollary \ref{shas}.

Although not needed for this proof, we briefly
indicate how $d,\delta,h$ were chosen. Looking at the matrices in Theorem
\ref{mat},
a necessary condition for $\langle A,B\rangle$ to be discrete and free is that
$ABA^{-1}B^{-1}$ is hyperbolic, so we require $v(Y)\neq v(X)$. On picking 
$v(X)=1$ and $v(Y)=-2$ (a somewhat ad hoc choice, obtained by examining a
specific case that was found to work by computation), if we want the valuation
of $\mbox{tr}(A)=d(X+Y)/X$ to be $-1$ under our choices for $v(X)$ and $v(Y)$
then this happens if and only if $v(d)=2$. Similarly the
valuation of $\mbox{tr}(B)=\delta(X+Y)/Y$ being $-1$ under the same condition
is equivalent to $\delta=-1$.
If we now try to satisfy $v(X)=v(1-d\delta h+d^2 h^2)=1$ with $v(d)=2$ and
$v(\delta)=-1$ then we cannot take $v(h)\geq 0$ because this implies that
$v(X)=0$. However $v(h)=-1$ would work
if there is some cancellation in forming $1-d\delta h$, each term of which has
valuation 0 but whose difference we will now assume has valuation $1$. 
These values
for $v(d),v(\delta),v(h)$ also give $v(Y)=-2$ as required if there is no  
cancellation when adding $\delta^2$ and $h^2$, both of which have valuation
$-2$.

As for obtaining $v(\mbox{tr}(AB))=-1$, this now happens if 
$v(d\delta+h(d\delta h-1))$ is 0. As we have assumed above one case of
cancellation in forming $d\delta h-1$, this will be true.

To ensure these conditions hold for specific choices of $d,\delta,h$, we aim
for the simplest expressions we can find. If $d\delta h=(x+1)/x$ then we
will have $v(d\delta h)=0$ but $v(d\delta h-1)=v(1/x)=1$ which gives us
the required cancellation. Thus on choosing
$\delta=x+1, h=x$ and therefore $d=1/x^2$, we see that $\delta^2+h^2=
2x^2+2x+1$, so if $p\neq 2$ then we do not have cancellation in forming this
sum and so all the required conditions above are satisfied.

Unfortunately this argument cannot work in $\F_2(x)$ because there will
always be cancellation when adding elements with the same valuation.
To deal with this case, we tried further possibilities for the valuations,
this time looking for the traces of $A,B,AB$ each to have valuation $-2$. 
On trying $v(d)=4,v(\delta)=-2,v(h)=-2$, we see that this would imply
$v(X)>0$ (because of the cancellation between 1 and $d\delta h$),
whereas $v(Y)>-4$ (because of cancellation between $\delta^2$ and $h^2$).
If we could arrange to have lots of cancellation between $d\delta h$ and 1
so that $v(d\delta h+1)=5$, giving
$v(X)=4$, and some cancellation between $\delta^2$ and $h^2$ giving
$v(\delta^2+h^2)=-2$ and thus $v(Y)=-2$,
then we obtain suitable traces for $A,B$ and $v(\mbox{tr}(AB))$ will also
be $-2$ if $v(Z)=1$, which will hold from imposing $v(d\delta h+1)=5$ above.

Thus we trying setting $\delta=x^2$ and $h=x^2+x+1$ to give us the
correct amount of cancellation in $\delta^2+h^2$. Then $d\delta h$ has to be 
something like $x^5/(x^5+1)$ to provide enough cancellation when adding it
to 1. Thus we also set $d=x^3/((x^2+x+1)(x^5+1))$ and these values satisfy
all of the above conditions, hence we have shown:
\end{proof}
\begin{co} \label{mat2}
The following matrices 
$A,B,C,D$ in 
$SL(2,\F_2(x,y))$ generate a faithful representation of the genus 2 surface
group, where we have $ABA^{-1}B^{-1}=DCD^{-1}C^{-1}$.
\begin{eqnarray*}
A&=&\sma{cc} 
\frac{x^8 + x^7 + x^5 + x^4 + x^3}{x^6 + x^5 + 1}&   
\frac{x^{13} + x^{11} + x^2 + x + 
    1}{(x^6 + x^5 + 1) (x^5 +1) (x^2 + x + 1)}\\
1&\frac{x^3}{(x^5 +1) (x^2 + x + 1)}\fma,\\
B&=&\sma{cc}
\frac{x^8 + x^7 + x^2}{(x^7+x^2+1) (x^5 + 1)}&\frac{x^{12} + x^{10} + x^9 + 
x^5 + x^4 + x^2+ 1}{(x^7 + x^2 + 1)(x^5 +1) (x^2 + x + 1)}\\
x^2 + x + 1&x^2\fma,
\end{eqnarray*}
and $D,C$ are the conjugates of $A,B$ respectively by $\mbox{diag}(1,y)$.
\end{co}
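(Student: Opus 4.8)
The plan is to run the proof of Theorem \ref{mat} again, but with parameter choices adapted to characteristic $2$, since the obstruction there was precisely that the odd-prime estimates relied on no cancellation occurring when one adds two field elements of equal valuation, which is impossible in $\F_2$. Concretely, I would take $c=1$, $\delta=x^2$, $h=x^2+x+1$ and $d=x^3/((x^2+x+1)(x^5+1))$, substitute these into the formulas of Theorem \ref{matfrm}, and expand to recover the explicit entries of $A$ and $B$ displayed in the statement (with $C,D$ their conjugates by $\mbox{diag}(1,y)$). The diagonal form $ABA^{-1}B^{-1}=\mbox{diag}(Y/X,X/Y)$ is automatic from Theorem \ref{matfrm}, so the only real work is to show that $\langle A,B\rangle$ is free of rank $2$ and discrete in $SL(2,k)$ for $k$ the completion of $\F_2(x)$ at the degree valuation $v$; granting this, Corollary \ref{shas} then applies word for word and delivers the faithful copy of $S_2$ in $SL(2,\F_2(x,y))$.

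The substance is the valuation bookkeeping. With the degree valuation one has $v(d)=4$ and $v(\delta)=v(h)=-2$. The two cancellations to record are $1-d\delta h=1/(x^5+1)$, of valuation $5$ (whence $v(X)=\min(5,4)=4$), and $\delta^2+h^2=x^2+1$, of valuation $-2$ after the leading $x^4$ terms cancel in characteristic $2$ (whence $v(Y)=\min(-2,0)=-2$). From $v(X+Y)=-2$ the trace formulas of Theorem \ref{matfrm} then give $v(\mbox{tr}\,A)=v(\mbox{tr}\,B)=-2$ directly, while writing $d\delta(1+h^2)-h=d\delta+h(d\delta h-1)$, whose summands have valuations $2$ and $3$, yields $v(\mbox{tr}\,AB)=-2$ as well.

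Since the three traces are then all of the same negative valuation $-2$, we land in Case 2(i) of Proposition 3.5 of \cite{con}, and Corollary 3.6 of that paper supplies the required freeness and discreteness, so Corollary \ref{shas} finishes the proof. The main obstacle is exactly the point that forced a new representation: over $\F_2$ the clean ``no cancellation when adding terms of equal valuation'' estimates used for odd $p$ all fail, so one must instead exploit cancellation, choosing $d,\delta,h$ so that $\delta^2+h^2$ collapses from degree $4$ down to $x^2+1$ while the three trace valuations still coincide. Checking that the stated parameters genuinely achieve this simultaneous balancing act — not merely confirming one claimed valuation in isolation — is the one delicate part of the argument.
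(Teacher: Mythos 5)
Your proposal is correct and follows essentially the same route as the paper: the paper's own justification of Corollary \ref{mat2} is precisely the closing paragraphs of the proof of Theorem \ref{mat}, where the parameters $\delta=x^2$, $h=x^2+x+1$, $d=x^3/((x^2+x+1)(x^5+1))$ are chosen so that $v(X)=4$, $v(Y)=-2$ and all three traces have valuation $-2$, after which Case 2(i) of \cite{con} and Corollary \ref{shas} finish the argument exactly as you describe. Your valuation bookkeeping (including the decomposition $d\delta(1+h^2)-h=d\delta+h(d\delta h-1)$ with summand valuations $2$ and $3$) matches the paper's computation.
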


We now show that the necessary condition for a proper action on a finite 
product of locally finite trees, given in the previous section, is
satisfied by the surface group $S_g$.

\begin{co} \label{moreacc}
If $S_g$ is the closed orientable hyperbolic surface group of genus $g\geq 2$ 
then for any non identity element $\gamma\in S_g$ there is an action of $S_g$
on some locally finite tree which is minimal, faithful and such that $\gamma$
acts hyperbolically.
\end{co}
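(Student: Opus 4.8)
The plan is to feed the explicit faithful representation $\rho\colon S_g\injects SL(2,\F_p(x,y))$ — the restriction to $S_g\leq S_2$ of the representation of Theorem \ref{mat} (or Corollary \ref{mat2} when $p=2$) — into a Bruhat--Tits tree over a suitable local field, chosen so that the prescribed element $\gamma$ becomes hyperbolic. Since $S_g$ is torsion free, $\rho(S_g)$ contains no element of order $2$ and in particular does not contain $-I$; as the centre $\{\pm I\}$ is exactly the kernel of the action of $SL(2,K)$ on its Bruhat--Tits tree for any field $K$ with a discrete valuation, every such tree action of $\rho(S_g)$ is automatically faithful. Thus the three things I must arrange are that the tree is locally finite, that $\gamma$ acts hyperbolically, and that after passing to the minimal invariant subtree (the core) the action stays faithful.

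For hyperbolicity, recall that $M\in SL(2,K)$ acts hyperbolically on the Bruhat--Tits tree exactly when $v(\operatorname{tr}M)<0$, so the heart of the matter is to show that $\tau:=\operatorname{tr}\rho(\gamma)$ is a \emph{non-constant} element of $\F_p(x,y)$ for every $\gamma\neq 1$, after which I choose a valuation detecting a pole of $\tau$ while keeping the residue field finite. Writing $S_2=\langle a,b\rangle *_H\langle c,d\rangle$ as in Corollary \ref{shas}, I split into cases. If $\gamma$ is conjugate into one of the free factors, its trace equals that of a non-trivial element of the free group $\langle A,B\rangle\leq SL(2,\F_p(x))$; since that group is discrete and free over $k=\F_p((x))$ (as established in the proof of Theorem \ref{mat}) and a discrete free subgroup of $SL(2,k)$ has no non-trivial elliptic elements — an elliptic element would generate an infinite cyclic group sitting inside a compact vertex stabiliser — the element is hyperbolic, so $v(\tau)<0$ and $\tau$ is non-constant. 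If $\gamma$ is not conjugate into a factor, the trace computation in the proof of Proposition \ref{shl} shows $\tau$ has the form $\alpha y^l+\cdots$ with $l\geq 1$ and $\alpha\neq 0$, hence again non-constant. With $\tau$ non-constant it is transcendental over $\F_p$, so I extend $\{\tau\}$ to a transcendence basis $\{\tau,\sigma\}$ of $\F_p(x,y)$, embed $\F_p(\tau,\sigma)\injects\F_p((t))$ by $\tau\mapsto t^{-1}$ and $\sigma\mapsto$ an element transcendental over $\F_p(t)$ (available since $\F_p((t))$ is uncountable), and then extend over the finite algebraic extension $\F_p(x,y)/\F_p(\tau,\sigma)$ to land in a finite extension $K$ of $\F_p((t))$. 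Then $K$ is a local field, its Bruhat--Tits tree $T$ is locally finite (regular of degree one more than the residue field size), $S_g$ acts faithfully, and $\gamma$ acts hyperbolically because $v_K(\tau)<0$.

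Finally I pass to the core $C(T)$, the union of the axes of hyperbolic elements, on which $S_g$ acts minimally; it is a locally finite subtree containing the axis $\alpha$ of $\gamma$. The step I expect to be the genuinely delicate one is checking that restricting to $C(T)$ preserves faithfulness. Suppose $n\in S_g$ acts trivially on $C(T)$; then $\rho(n)$ fixes $\alpha$ pointwise. A hyperbolic element of $SL(2,K)$ over a complete field has both eigenvalues in $K$ (the two roots of its characteristic polynomial would otherwise be Galois conjugate, hence of equal valuation, contradicting $v(\lambda)\neq v(\lambda^{-1})$), so I may diagonalise $\rho(\gamma)$ over $K$; a direct computation then shows the pointwise stabiliser of $\alpha$ consists of the diagonal matrices $\operatorname{diag}(u,u^{-1})$ with $v(u)=0$, all of which commute with $\rho(\gamma)$. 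Hence $n$ centralises $\gamma$, and since $S_g$ is a closed hyperbolic surface group its centraliser of $\gamma$ is the maximal cyclic subgroup $\langle\eta\rangle$ containing $\gamma$, with $\eta$ itself hyperbolic, so $\rho(\eta)=\operatorname{diag}(\mu,\mu^{-1})$ with $v(\mu)\neq 0$; thus $\rho(n)=\rho(\eta)^m$ can have unit-valuation eigenvalues only if $m=0$, i.e. $n=1$. Therefore the action of $S_g$ on $C(T)$ is minimal, faithful, on a locally finite tree, with $\gamma$ hyperbolic, which is exactly what is required.
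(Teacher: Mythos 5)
Your argument is correct, and its skeleton matches the paper's: restrict to $S_2$, push the explicit representation of Theorem \ref{mat} (or Corollary \ref{mat2}) into $SL(2,K)$ for a local field $K$, act on the Bruhat--Tits tree, split into cases according to whether $\gamma$ is conjugate into a vertex free factor of $S_2=\langle a,b\rangle *_H\langle c,d\rangle$, and finish by restricting to the core. You diverge in two of the key steps, and both of your alternatives are sound. First, to force $v(\operatorname{tr}\rho(\gamma))<0$ the paper stays inside the completion $k$ of $\F_p(x)$ (so the tree is always $T_{p+1}$) and simply replaces the transcendental $y\in k$ by $z=yx^n$ for $n$ large, so that the leading term $\alpha y^l$ of the trace dominates; you instead build a fresh local field adapted to $\gamma$ by extending $\{\operatorname{tr}\rho(\gamma)\}$ to a transcendence basis of $\F_p(x,y)$ and sending the trace to $t^{-1}$. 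Your route works but is heavier, and the paper's rescaling trick has the bonus (noted parenthetically there) of making any finite set of nonidentity elements simultaneously hyperbolic in a single action. Second, for faithfulness after passing to the core, the paper argues via ends: an element acting trivially on the core fixes all of its ends, a M\"obius transformation fixes at most two points of $\mathbb{P}^1(k)$, and the core has more than two ends because $A$ and $B$ are independent hyperbolics. You instead compute the pointwise stabiliser of the axis of $\rho(\gamma)$ (the unit diagonal torus, after diagonalising over $K$ --- your justification that the eigenvalues lie in $K$ via uniqueness of the extended valuation is correct) and then invoke the fact that centralisers of nontrivial elements of $S_g$ are cyclic. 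That is a clean piece of algebra, but it leans on specific structure of $SL_2$ and of surface groups, whereas the paper's ends count needs only the existence of two independent hyperbolic elements and so transfers to other groups in this setting. Either way the statement is proved.
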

\begin{proof}
We show this for the group $S_2$ as all other $S_g$ are finite index
subgroups of $S_2$, so the action will still be minimal.

We regard the global field $\F_p(x)$ as sitting inside its metric completion,
the local field $k$, whereupon $SL(2,k)$ also acts on the $p+1$ regular
tree by automorphisms, and does so faithfully apart from $-I$.
Given the above matrices $A,B,C,D\in SL(\F_p(x,y))$, we can regard
them as lying in $SL(2,k)$ on taking $y$ to be any element in $k$ which is
transcendental over $\F_p(x)$ and such elements exist by countability
considerations. We thus obtain a faithful action of $S_2$ on the tree
$T_{p+1}$. Now the free subgroups $\langle A,B\rangle$ and $\langle C,D\rangle$
both act purely hyperbolically by construction, so our element $\gamma$
will automatically be a hyperbolic element if it is conjugate into either of
these subgroups. Otherwise, as mentioned at the end of the proof of
Proposition \ref{shl}, the trace of $\gamma$
will be a Laurent polynomial in the variable $y$ of the form 
$\alpha y^l$ plus lower order terms in $y$,
where $l>0$ and
$\alpha$ is a non zero element of $\F_p(x)$. Now this expression does not 
change if we change the element $y$ in $k$, as long as it is still 
transcendental over $\F_p(x)$.
To ensure that $\gamma$ is hyperbolic here, we require that its trace
has negative valuation. But if $y$ is a transcendental element of $k$ then
for $n\in\Z$ we have that $z=yx^n\in k$ is still transcendental over
$\F_p(x)$ and with $v(z)=v(y)-n$. Thus regardless of $v(\alpha)$ or the
other coefficients in the Laurent polynomial for $\mbox{tr}(\gamma)$, if
we take $n$ large enough and replace $y$ with $z$ then this trace will have
negative valuation. (In fact this argument allows us to ensure any finite
collection of non identity elements can all be made to act hyperbolically
in a single action.)

Finally we are not claiming that this action will definitely be minimal. But
if we restrict to its core, we have a minimal action in
which hyperbolic elements remain hyperbolic. Thus we can only lose
faithfulness of the action if there were some non identity element in $S_2$
which is acting elliptically on $T_{p+1}$ in the original action but
which acts as the identity when restricted to the core. However this action
of $SL(2,k)$ extends to an action on the boundary of the tree $T_{p+1}$, 
which is projective space $\mathbb P^1(k)$, where elements act as
M\"obius transformations. Such a transformation fixes
at most two ends, whereas if this element were acting as the identity on
the core, it would fix all ends of the core. But $A$ and $B$ provide two
independent hyperbolic elements in this action, thus the core has more than
two ends and hence this restriction is still faithful.
\end{proof}

We end by pointing out that, although we might regard the above as positive
evidence that a hyperbolic surface group acts properly on a finite product
of locally finite trees, we have not demonstrated the existence of a single
word hyperbolic group which has such such an action but which is not virtually
free. However, let us recall the infamous question credited to Gromov asking 
whether such a group always contains a surface group. If we had demonstrated
such an example then either it contains a surface subgroup $S_g$, thus $S_g$
and $S_2$ will also act properly on a finite product of locally finite
trees, or we would have a counterexample to this question. Looking at it
the other way, if it could be shown that $S_g$ does not have such an action
then nor can any word hyperbolic group (other than those which are virtually
free), at least if Gromov's question has a positive answer.

\section{Appendix}

As mentioned in Theorem \ref{matfrm}, we show:
\begin{thm}
Suppose $\F$ is any infinite field (in any characteristic) and
$A,B$ are matrices in $SL(2,\F)$ with $ABA^{-1}B^{-1}$ a diagonal matrix.
Then, apart from some exceptional cases where $\langle A,B\rangle$ cannot
be free of rank 2 and possibly one further family of representations,
there exist non zero elements $c,d,\delta,h\in \F$ such
that if we set $X=1-d\delta h+d^2 h^2$ and $Y=\delta^2-d\delta h+h^2$ then
$X$ and $Y$ are also non zero and
\[
A=\sma{cc}\frac{dY}{X}&\frac{d\delta h(1-d^2)+d^2\delta^2-1}{cX}\\
c&d\fma\quad,\quad B=\sma{cc}\frac{\delta X}{Y}&
\frac{d\delta(1-\delta^2)+h(d^2\delta^2-1)}{cY}\\
ch&\delta\fma,\]
with $ABA^{-1}B^{-1}=\mbox{diag}(Y/X,X/Y)$. Conversely if $A,B$ are
in the above form with $c,d,\delta,h,X,Y$ all non zero then $ABA^{-1}B^{-1}$
is equal to the diagonal matrix $\mbox{diag}(Y/X,X/Y)$.
\end{thm}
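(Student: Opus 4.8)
The converse assertion is a direct computation: substituting the displayed forms of $A$ and $B$ and multiplying out $ABA^{-1}B^{-1}$, using $A^{-1}=\sma{cc}d&-A_{12}\\-c&A_{11}\fma$ and the analogous formula for $B^{-1}$ (valid since $\det A=\det B=1$), yields $\mathrm{diag}(Y/X,X/Y)$ after simplification. This is elementary but lengthy and is best verified by machine, so I will concentrate on the forward direction, which is where the forms come from.

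For the forward direction, write $D=ABA^{-1}B^{-1}=\mathrm{diag}(\mu,\mu^{-1})$. If $\mu=1$ then $A$ and $B$ commute and $\langle A,B\rangle$ cannot be free of rank $2$, so this lies among the excluded cases; hence I assume $\mu\neq 1$. The idea is to take the two bottom-row entries of each of $A$ and $B$ as the four free parameters and show that everything else is then forced. Accordingly set $c=A_{21}$, $d=A_{22}$, $\delta=B_{22}$, and (when $A_{21}\neq 0$) $h=B_{21}/A_{21}$, so that the bottom rows of $A,B$ are $(c,d)$ and $(ch,\delta)$ as in the statement. The requirements $c\neq 0$ and $h\neq 0$ (i.e. $A_{21},B_{21}\neq 0$) exclude the triangular configurations, which together with the vanishing of $d$ or $\delta$ account for the exceptional cases and the one further family; these I would dispose of separately, noting that a triangular $A$ or $B$ pushes $\langle A,B\rangle$ into a soluble (hence non-free) configuration or into a special trace-zero family.

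The main computation then runs in three short steps. First, rewrite $D=ABA^{-1}B^{-1}$ as $ABA^{-1}=DB$ and as $BAB^{-1}=D^{-1}A$; taking traces and using $\mathrm{tr}(ABA^{-1})=\mathrm{tr}(B)$ and $\mathrm{tr}(BAB^{-1})=\mathrm{tr}(A)$ gives $\mu B_{11}+\mu^{-1}B_{22}=B_{11}+B_{22}$ and $\mu^{-1}A_{11}+\mu A_{22}=A_{11}+A_{22}$, whence, dividing by $\mu-1\neq 0$, the diagonal relations $A_{11}=\mu d$ and $B_{11}=\mu^{-1}\delta$. Second, feed these into $\det A=1$ and $\det B=1$ to solve for the top-right entries $A_{12}=(\mu d^2-1)/c$ and $B_{12}=(\mu^{-1}\delta^2-1)/(ch)$. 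Third, impose one remaining scalar equation from $AB=D\,BA$, most conveniently the $(1,1)$ entry $A_{11}B_{11}+A_{12}B_{21}=\mu\bigl(B_{11}A_{11}+B_{12}A_{21}\bigr)$; substituting the expressions already found and clearing denominators collapses this to $\mu(1-d\delta h+d^2h^2)=\delta^2-d\delta h+h^2$, that is $\mu X=Y$, so $\mu=Y/X$ (and in particular $X,Y\neq 0$ automatically, since $\mu$ is a nonzero finite scalar). Back-substituting $\mu=Y/X$ into $A_{11},B_{11},A_{12},B_{12}$ reproduces exactly the matrices in the statement, using the routine identities $d^2Y-X=d\delta h(1-d^2)+d^2\delta^2-1$ and $\delta^2X-Y=h\bigl(d\delta(1-\delta^2)+h(d^2\delta^2-1)\bigr)$, the factor $h$ cancelling in $B_{12}$. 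The two commutator equations not yet used are then automatically consistent, which is guaranteed precisely by the converse computation above, so the loop closes.

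The conceptual skeleton is thus clean and short; the genuine obstacles are bookkeeping rather than structural. The main one is a careful enumeration and disposal of the degenerate cases — $A_{21}=0$ or $B_{21}=0$ (triangular), $d=0$ or $\delta=0$ (trace zero), and $\mu=1$ (abelian) — together with verifying that each genuinely cannot yield a rank-$2$ free group except for the single anomalous family flagged in the statement. The secondary obstacle is simply carrying out the algebraic identities of the last step reliably, for which a symbolic check is advisable.
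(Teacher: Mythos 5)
Your route to the normal form is genuinely different from the paper's and is mostly sound. Where the paper first parametrises $AB$ and $BA$ via their action on $\mathbb{P}^1(\F)$ (each must carry a common pair of points $x,y$ to the fixed points $\infty,0$ of the commutator) and only then works back to $A$ and $B$, you work directly with the entries of $A$ and $B$: the relations $A_{11}=\mu A_{22}$ and $B_{11}=\mu^{-1}B_{22}$ fall out of the trace identities $\mathrm{tr}(D^{-1}A)=\mathrm{tr}(A)$ and $\mathrm{tr}(DB)=\mathrm{tr}(B)$, the off-diagonal entries come from the determinants, and $\mu X=Y$ comes from the $(1,1)$ entry of $AB=DBA$. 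Since every one of these is a necessary consequence of the hypothesis, no consistency check on the remaining entries is required, and your back-substitution identities for $d^2Y-X$ and $\delta^2X-Y$ are correct. This is arguably more elementary than the paper's projective-line argument and reaches the same parametrisation.

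There is, however, one genuine error: the parenthetical claim that ``$X,Y\neq 0$ automatically, since $\mu$ is a nonzero finite scalar.'' The relation $\mu X=Y$ with $\mu\neq 0$ gives only $X=0\iff Y=0$; it does not exclude $X=Y=0$, in which case $\mu$ is not determined by that equation and $A,B$ do not take the stated form. This degenerate locus is precisely the ``one further family of representations'' that the theorem statement reserves: the paper computes it explicitly as $d^2=-1/(h^2(h^2+1))$, $\delta=-dh^3$, $r=h(h^2+1)$, noting that over $\R$, for instance, it contributes nothing. You have instead misattributed that further family to the $d=0$ or $\delta=0$ degenerations; those, like the triangular cases $A_{21}=0$ or $B_{21}=0$, belong among the ``cannot be free of rank~$2$'' exceptions (a zero diagonal entry of $A$ forces $ADA^{-1}$ to be triangular, so $\langle ADA^{-1},D\rangle$ is soluble, and similarly a zero off-diagonal entry makes $\langle A,D\rangle$ soluble). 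So you must insert the case split on $X=0$ before dividing by it and identify that case as the reserved family; with that repair the argument is complete.
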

\begin{proof}
We first note that if a nondiagonal entry of $M=A,B,AB$ or $BA$ is zero then
the subgroup $\langle M,ABA^{-1}B^{-1}\rangle$ is soluble and so 
$\langle A,B\rangle$ cannot be $F_2$. Also if a diagonal entry of $M$ as above
is zero then the same holds for 
$\langle MABA^{-1}B^{-1}M^{-1},ABA^{-1}B^{-1}\rangle$, so again $A$ and $B$ do not
generate $F_2$.

Consider the action of $SL(2,\F)$ on the projective space $\mathbb{P}^1(\F)$
via M\"obius transformations. Any non identity element has at most 2 fixed
points and for $ABA^{-1}B^{-1}$ these are $\infty$ and $0$. Consequently if
$A^{-1}B^{-1}$ sends $\infty,0$ to $x,y\in\mathbb{P}^1(\F)$ respectively
then both $AB$ and $BA$ send $x,y$ to $\infty,0$. Now we have $x\neq y$
and by the point above about zeros in the matrices, we also see that
neither $x$ nor $y$ is $\infty$ or $0$, so we can regard $x,y$ as distinct
elements of $\F^*$. This means that there are other elements $s,t,u,v\in\F^*$
such that 
\[AB=\left( \begin{array}{rr}
u & -uy \\ s & -sx \end{array} \right) \quad\mbox{and}\quad
BA=\left( \begin{array}{rr}
v & -vy \\ t & -tx \end{array} \right). \]

But $AB$ and $BA$ are conjugate matrices, so we can equate their trace
and determinant. The latter gives us $us=vt$ (as $x\neq y$) and then the
former implies that $v=-sx$ (as $t=s$ gives us $A=B$) and $u=-tx$.
Moreover as both of these determinants are 1, we now have
\[
AB=\sma{rc} -tx&tx^2-1/s\\s&-sx\fma,
BA=\sma{rc} -sx&sx^2-1/t\\t&-tx\fma\]
with $ABA^{-1}B^{-1}=\mbox{diag}(t/s,s/t)$.

We are now ready to set 
\[A=\left( \begin{array}{rr}
a & b \\ c & d
\end{array} \right) \quad\mbox{and}\quad
B=\left( \begin{array}{rr}
\alpha& \beta \\ \gamma & \delta
\end{array} \right)\]
with $ad-bc=1=\alpha\delta-\beta\gamma$ and we equate
some coefficients in $AB$ and $BA$. The $-tx$ coefficient being the same in
both $AB$ and $BA$ is equivalent to $a\alpha=d\delta$, which is also the
same condition for the $-sx$ coefficients to match. As $s,t\neq 0$ we will
set $r=s/t$, whereupon taking the ratio of the bottom left hand entries of 
$AB$ and $BA$ gives us that
\[r=\frac{c\alpha+d\gamma}{c\delta+d\delta\gamma/\alpha}=\alpha/\delta.\]
Thus we now replace
$a$ and $\alpha$ with $d/r$ and $r\delta$ respectively. At this point,
we also take the opportunity to replace $b$ by $(d^2-r)/(rc)$ and
$\beta$ by $(r\delta^2-1)/\gamma$. Moreover as we can conjugate $A$ and $B$
by diagonal matrices without altering $ABA^{-1}B^{-1}$, we can multiply $c$
and $\gamma$  by the same arbitrary non zero constant. We could impose $c=1$
but we will set $h=\gamma/c$. Substituting all of the above now gives us
\[A=\left( \begin{array}{cc}
\frac{d}{r}& \frac{d^2-r}{rc}\\ c & d
\end{array} \right) \quad,\quad
B=\left( \begin{array}{cc}
r\delta & \frac{r\delta^2-1}{ch} \\ ch & \delta
\end{array} \right). \;\;\,\quad\]
From the argument above, we have that the respective diagonal elements of 
$AB$ and $BA$ match and $r$ is the ratio of the lower left hand entries.
As the upper right hand entries are forced by the determinants being 1,
we now have that $AB$ and $BA$ are in the correct form (for some
$s,t,x\in\F^*$) if and only if the ratio of the bottom right hand entries
of $AB$ and $BA$ is $s/t=r$, so reading this off from $A$ and $B$ gives us
\[r=\frac{\frac{r\delta^2-1}{h}+d\delta}{\frac{(d^2-r)h}{r}+d\delta}.\]
This is actually linear in $r$ and results in the equation $Yr=X$ for
$X=1-d\delta h+d^2h^2$ and $Y=\delta^2-d\delta h+h^2$.

If $X$ and hence $Y$ are non zero then $A$ and $B$ have the form as
given in the statement of the theorem and we know $ABA^{-1}B^{-1}$
is $\mbox{diag}(1/r,r)$, so this is equal to $\mbox{diag}(Y/X,X/Y)$.
(If $X$ is zero, hence so is $Y$ as $r\neq 0$, then there can be one
further family of solutions as mentioned in the statement of the theorem.
Namely it can be checked by direct calculation that this situation
where $X=Y=0$ is equivalent to ($c=1$ without loss of generality and)
$d^2=-1/(h^2(h^2+1)),\delta=-dh^3,r=h(h^2+1)$ but over $\R$ for instance
this would not give any further solutions.)

Conversely suppose that $A$ and $B$ have the form above for some elements
$c,d,\delta,h$ and $X=1-d\delta h+d^2 h^2$, $Y=\delta^2-d\delta h+h^2$
of $\F$, all of which are non zero. Then reversing the argument above
gives us that $AB$ and $BA$ are of the required form to ensure that
$AB(BA)^{-1}$ is a diagonal matrix, with the diagonal entries given by
the ratio of the bottom right hand entries of $AB$ and $BA$, thus
$r=X/Y$ and $1/r=Y/X$.
\end{proof}


\begin{thebibliography}{99}

\bibitem{bslb} H.\,Bass and A.\,Lubotzky,
{\it Rigidity of group actions on locally finite trees},
Proc. London Math. Soc. {\bf 69} (1994) 541-–575. 

\bibitem{bbk} M.\,Bestvina, K.\,Bromberg, and K.\,Fujiwara,
{\it Constructing group actions on quasi-trees and applications to 
mapping class groups},
Publ. Math. Inst. Hautes \'Etudes Sci. {\bf 122} (2015) 1-–64. 

\bibitem{bbk2} M.\,Bestvina, K.\,Bromberg, and K.\,Fujiwara,
{\it Proper actions on finite products of quasi-trees},
\texttt{http://arxiv.org/1905.10813}

\bibitem{bh1} M.\,Bhattacharjee,
{\it Constructing finitely presented infinite nearly simple groups},
Comm. Algebra {\bf 22} (1994) 4561–-4589. 

\bibitem{bh2} M.\,Bhattacharjee,
{\it Some interesting finitely presented infinite amalgamated free products},
Quart. J. Math. Oxford Ser. {\bf 48} (1997) 1–-10. 

\bibitem{bh} M.\,R.\,Bridson and A.\,Haefliger,
Metric spaces of non-positive curvature,
Springer-Verlag, Berlin, 1999.

\bibitem{bm} M.\,Burger and S.\,Mozes,
{\it Lattices in product of trees},
Inst. Hautes \'Etudes Sci. Publ. Math. {\bf 92} (2000) 151-–194. 

\bibitem{cap} P\,-\,E.\,Caprace and B.\,R\'emy,
{\it Simplicity and superrigidity of twin building lattices},
Invent. Math. {\bf 176} (2009) 169–-221. 

\bibitem{con} M.\,Conder
{\it Discrete and free two-generated subgroups of $SL_2$ over non-archimedean
local fields},
\texttt{http://arxiv.org/1908.11114}

\bibitem{dlhc} Y.\,Cornulier and P.\,de la Harpe,
Metric geometry of locally compact groups. EMS Tracts in Mathematics, 25. 
European Mathematical Society (EMS), Z\"urich, 2016. 


\bibitem{deck} Y.\,Cornulier and A.\,Kar, 
{\it On property (FA) for wreath products},
J. Group Theory {\bf 14} (2011) 165-–174. 


\bibitem{cuvo} M.\,Culler and K.\,Vogtmann,
{\it A group-theoretic criterion for property FA},
Proc. Amer. Math. Soc. {\bf 124} (1996) 677-–683.

\bibitem{dassu} T.\,Das, D.\,Simmons and M.\,Urba\'nski,
Geometry and dynamics in Gromov hyperbolic metric spaces. With an emphasis on 
non-proper settings. Mathematical Surveys and Monographs, 218. 
American Mathematical Society, Providence, RI, 2017. 

\bibitem{delh} P.\,de la Harpe, 
Topics in geometric group theory.
Chicago Lectures in Mathematics. 
University of Chicago Press, Chicago, IL, 2000.  

\bibitem{drnjan} A.\,Dranishnikov and T.\,Januszkiewicz,
{\it Every Coxeter group acts amenably on a compact space},
Topology Proc. {\bf 24} (1999) 135–-141. 

\bibitem{flss} D.\,Fisher, M.\,Larsen, R.\,Spatzier and M.\,Stover,
{\it Character varieties and actions on products of trees},
Israel J. Math. {\bf 225} (2018) 889--907.

\bibitem{gr} D.\,Groves and J.\,F.\,Manning,
{\it Dehn filling in relatively hyperbolic groups},
Israel J. Math. {\bf 168} (2008) 317-–429. 

\bibitem{kk}  S.\,Kim and T.\,Koberda,
{\it Embedability between right-angled Artin groups},
Geom. Topol. {\bf 17} (2013) 493--530.


\bibitem{lubseg} A.\,Lubotzky and D.\,Segal, Subgroup growth.
Progress in Mathematics 212, Birkha\"user Verlag, Basel, 2003.

\bibitem{mol} R.\,G.\,M\"oller, 
{\it The automorphism groups of regular trees},
J. London Math. Soc. {\bf 43} (1991) 236-–252. 

\bibitem{nw} G.\,A.\,Niblo and D.\,T.\,Wise,
{\it Subgroup separability, knot groups and graph manifolds},
Proc. Amer. Math. Soc. {\bf 129} (2000) 685--693.

\bibitem{olos} A.\,Yu.\,Olshanskii and D.\,V.\,Osin,
{\it Large groups and their periodic quotients},
Proc. Amer. Math. Soc. {\bf 136} (2008) 753-–759. 

\bibitem{rat} J.\,G.\,Ratcliffe,
Foundations of hyperbolic manifolds.
Second edition. Graduate Texts in Mathematics, 149. Springer, New York, 2006.

\bibitem{ser} J.-P.\,Serre,
Trees, Springer-Verlag, Heidelberg, 1980.


\bibitem{sha} P.\,B.\,Shalen, {\it Linear representations of certain
amalgamated products}, J. Pure. Appl. Algebra {\bf 15} (1979), 187--197.

\bibitem{z} D.\,V.\,Zno\"iko,
{\it The automorphism groups of regular trees},
Mat. Sb. (N.S.) {\bf 103}({\bf 145}) (1977) 124--130, 144. 

\end{thebibliography}
\end{document}